 \theoremstyle{plain}
\newcommand{\Cc}{\mathcal C}
\newcommand{\Pp}{\mathcal P}
\newcommand{\Qq}{\mathcal Q}
\newcommand{\Gg}{\mathcal G}
\newcommand{\Ii}{\mathcal I}
\newcommand{\Vv}{\mathcal V}
\newcommand{\Uu}{\mathcal U}
\newcommand{\Xx}{\mathcal X}
\newcommand{\Yy}{\mathcal Y}
\newcommand{\Ff}{\mathcal F}
\newcommand{\p}{\mathfrak p}
\newcommand{\R}{\mathbb R}
\newcommand{\LL}{\mathbb L}
\newcommand{\Hess}{\mbox{Hess}}
\newcommand{\cv}{\cdot}
\newcommand{\nin}{\noindent}
\newcommand{\fun}{\longrightarrow}
\newcommand{\ep}{\varepsilon}
\newcommand{\lp}{\langle}
\newcommand{\rp}{\rangle}
\newtheorem{Teo}{Theorem} 
\theoremstyle{definition}
\newtheorem{defn}[Teo]{Definition}
\newtheorem{prop}[Teo]{Proposition}
\newtheorem{lema}[Teo]{Lemma}
\theoremstyle{remark}
\newtheorem{obs}[Teo]{Remark}
\newtheorem{af}[Teo]{Claim}
\newenvironment{prova}
{\noindent \texttt{Proof:}}
{\hfill$\Diamond$} 
 \title[ Axiumbilic Singular Points on  Surfaces Immersed  in  $\R^4 $]{ Axiumbilic Singular Points on  Surfaces Immersed   in  $\R^4$      and their  Generic Bifurcations}
 \author{   R. Garcia, J. Sotomayor and F. Spindola}
\begin{document}

\begin{abstract}
Here are described the {\it  axiumbilic}
points
 that appear in
generic
one parameter families
of surfaces immersed in  $\R^4$. At these points
 the ellipse of curvature of the immersion,   Little
\cite{little},
Garcia - Sotomayor  \cite{sotogarcia1},  has equal axes.

A review   is made on the basic preliminaries on axial curvature lines and the associated
axiumbilic points which are the singularities of the   fields of
{ \it principal},
{ \it mean axial lines},
{ \it axial crossings} and
the
quartic differential equation
defining them.

   The  Lie-Cartan  vector field suspension of the quartic differential equation,  giving a line field tangent to the
  Lie-Cartan surface  (in the projective bundle of the source immersed surface
  which quadruply covers
  a
  punctured
   neighborhood of the axiumbilic point) whose integral curves project regularly on the lines of axial curvature.

  In an appropriate  Monge chart   the configurations of the generic  axiumbilic  points, denoted by  $E_3$, $E_4$ and $E_5$  in \cite{sotogarcia1} \cite{sotogarcia},  are obtained by studying the integral curves of the Lie-Cartan  vector field.

Elementary bifurcation theory is applied to the study of  the transition and elimination  between  the axiumbilic  generic points.
The two generic patterns  $E^1_{34}$ and $E^1_{45}$ are analysed and their axial configurations are  explained
in terms of
 their qualitative changes  (bifurcations)
 with one  parameter in the space of  immersions, focusing on their  close analogy
 with
  the saddle-node
 bifurcation for  vector fields in the plane  \cite{andronov}, \cite{soto1}.

This work can be regarded as a partial  extension  to  $\mathbb R^4$
of  the umbilic bifurcations in Garcia - Gutierrez - Sotomayor
\cite{soto-ga-gu}, for surfaces in $\mathbb R^3$.
With less restrictive differentiability hypotheses and distinct methodology
it has points of contact with  the results of Gutierrez - Gui\~nez - Casta\~neda \cite{gutierrez3}.

\end{abstract}
\maketitle
 
\section*{Introduction}

In this work  are described the axiumbilic singularities, at which the ellipse of curvature,
as defined in Little \cite{little} and  Garcia - Sotomayor
\cite{sotogarcia1},  has equal axes.
 The focus here  are the axiumbilic points  that appear
 generically
 in one parameter families
of surfaces immersed in  $\R^4$.
 It can be regarded as an  extension from $\R^3$ to $\R^4$,  as target spaces for immersed  surfaces, and from umbilic to axiumbilic points as singularities,
 of results obtained  by Gutierrez - Garcia - Sotomayor in \cite{soto-ga-gu}.
 It is also a continuation, in the direction   of bifurcations of  axiumbilic singularities, of the study  of the structural stability of
   global axial configurations  started in Garcia - Sotomayor
\cite{sotogarcia1}.

An outline of the  organization of this paper  follows:

Section \ref{sec:1}  deals  with geometric preliminaries
and  a review of
axial lines and axiumblic points in order to define the
{\it principal}
 and
 {\it mean curvature}
configurations and their quartic differential equations.

In Section \ref{sec:2},  locally  presenting  a surface  $M$ immersed into $\R^4$ with a Monge chart, are  studied the axiumbilic points  and the
transversality conditions in terms of which are defined the generic axiumbilic points are made explicit.

 Section \ref{sec:3} establishes   the axial  principal and mean configurations in a neighborhood  of generic axiumbilic points, denoted   $E_3$, $E_4$
   and $E_5$.
    This description uses the suspension of Lie-Cartan,
     giving rise to a line field tangent to a surface, which quadruply covers
  a
  punctured
  neighborhood of the axiumbilic point, and whose integral lines project regularly on the lines of axial curvature.
    This follows   the approach of  Garcia and  Sotomayor in \cite{sotogarcia1}  and \cite{sotogarcia}, chap. 8.

After this review follow two  subsection  devoted to describe the behaviors
 of axial lines near the axiumbilic  points
denoted $E^1_{34}$ and  $E^1_{45}$,  which are the
transversal
 transitions between  the generic axiumbilic points.
 
 In fact, the axiumbilic  point  $E^1_{34}$ (Figure \ref{e34}) characterizes the  transition between an axiumbilic point of type $E_3$ and one of type  $E_4$, which is explained  by the  variation of one parameter family in the  space
  of immersions  ${\Cc}^r,\ r\geq 5$ of a surface  $M$ into  $\R^4$ (Proposition \ref{proposicao.e34}), in a first analogy with the saddle-node  bifurcation
  of vector fields  \cite{andronov},  \cite{soto1}.

  The axiumbilic point
 $E^1_{45}$ (Figure \ref{desenho.e45}) is characterized by
 the
 collision and subsequent elimination  between  one  point
 of type
 $E_4$ and  other of type   $E_5$.
 Here also, this  bifurcation phenomenon is explained by means of a one parameter variation in the space of immersions  (Proposition \ref{proposicao.e45}), in a second analogy  with the  saddle-node bifurcations in the  plane \cite{andronov}  \cite{soto1}.

Section \ref{sec:4}  establishes the genericity of the axiumbilic bifurcations studied in  this paper.

 This work  can be  related to   the papers by  Gu\'i\~nez-Guti\'errez \cite{gutierrez1} and Gu\'i\~nez-Guti\'errez-Casta\~neda \cite{gutierrez3}
  where  a description, in  class  ${\Cc}^{\infty}$ and  in the context of quartic differential forms, of the  points
   $E^1_{34}$ and $E^1_{45}$ (using the notation $H_{34}$ and $H_{45}$), can be found.

Here was adopted a different approach, using the Lie-Cartan suspension as established in   Garcia-Sotomayor \cite{sotogarcia1},
for immersions of class
${\Cc}^r,  5\leq  r \leq \infty $.
This  leads to an interpretation of these points with less  restrictive differentiability hypotheses and allows proofs with
 techniques   closer to those of
elementary bifurcation theory as in \cite {andronov} and \cite{soto1}.

Section \ref{sec:5} closes the paper with related comments on its  results and their connection with others found  in the literature.

\section{Differential Equation of Axial Lines }\label{sec:1}

Let  $\alpha: M  \fun \R^4$ be an immersion of class ${\Cc}^r$, $r\geq 5$, of an oriented smooth surface in  $\R^4$, with the canonical orientation.
Assume that $(x,y)$ is a positive chart of $M $ and that  $\{\alpha_x,\alpha_y,N_1,N_2\}$ is a smooth
positive  frame in $\R^4$,
where  for
$\p \in M$, $\{\alpha_x =\partial{\alpha}/\partial{x}, \alpha_y = \partial{\alpha}/\partial{y}\}_{\p}$  is  the
the standard
  basis of $T_{\p}M$  in the chart
$(x,y)$
   and  $\{N_1,N_2\}_{\p}$ is a basis of the normal plane  $N_{\p} M$.

In the chart  $(x,y)$, the first fundamental form is expressed by
$$I_{\alpha} = \langle D\alpha, D\alpha\rangle = E dx^2 + 2Fdxdy + G dy^2$$
 where,
$E=\lp \alpha_x,\alpha_x \rp, F=\lp\alpha_x,\alpha_y\rp$ and $G=\lp\alpha_y,\alpha_y\rp$
and the second  fundamental form is given by
$II_{\alpha} = II^1_{\alpha} N_1 + II^2_{\alpha} N_2$
where  $II^i_{\alpha}, i=1,2,$  is
$$II^i_{\alpha} := \langle D^2 \alpha, N_i\rangle = e_i dx^2+2f_i dx dy+g_i dy^2$$
being
$e_i=\lp\alpha_{xx},N_i\rp, f_i=\lp\alpha_{xy},N_i\rp$ and  $g_i=\lp\alpha_{yy},N_i\rp$.

The  \textit{mean curvature vector } is defined by $H=h_1 N_1 +h_2 N_2$
with
$$h_i=\frac{Eg_i-2Ff_i+Ge_i}{2(EG-F^2)}.$$

For
  $v \in T_{\p}M$, the  \textit{normal curvature vector in the direction    $v$}  is defined by:
\begin{equation}
\label{kn}
k_n=k_n(\p,v) = \frac{II_{\alpha}(v)}{I_{\alpha}(v)}= \frac{II^1_{\alpha}(v)}{I_{\alpha}(v)}N_1 + \frac{II^2_{\alpha}(v)}{I_{\alpha}(v)}N_2 .
\end{equation}

 The image of  $k_n$ restricted to the unitary circle   $S^1_{\p}$ of $T_{\p}M$ describes  in $N_{\p}M$ an ellipse centered in  $H(\p)$, which is called
  \textit{ellipse of curvature}  of $\alpha$ at $\p$, and it will be denoted by $\ep_{\alpha}(\p)$.
	
	\nin When  $(e_1-g_1)f_2 - (e_2-g_2)f_1\neq 0$, it is an actual non-degenerate ellipse, which can be a circle. Otherwise it can be a segment or a point.
 As  $k_n  |_{S^1_{\p}}$ is quadratic, the pre-image of each point of the ellipse is formed of two antipodal points on   $S^1_{\p}$, and therefore   each point $\ep_{\alpha}(\p)$ is associated to a direction  in  $T_{\p}M$.
 Moreover, for each pair
 of points in   $\ep_{\alpha}(\p)$
antipodally
 symmetric with respect
  to  $H(\p)$, it is associated two orthogonal directions in  $T_{\p}M$, defining a pair of   \textit{lines} in  $T_pM$  \cite{little}, \cite{luisfernando}, \cite{lf}.

Consider the function:

\begin{eqnarray*}
\|k_n - H\|^2
&:=&
\bigg [ \frac{e_1 dx^2+2f_1 dx dy+g_1 dy^2}{E dx^2 + 2F dx dy + G dy^2} - \frac{Eg_1-2Ff_1+Ge_1}{2(EG-F^2)} \bigg ]^2 \\
&+&
\bigg [ \frac{e_2 dx^2+2f_2 dx dy+g_2 dy^2}{E dx^2 + 2F dx dy + G dy^2} - \frac{Eg_2-2Ff_2+Ge_2}{2(EG-F^2)}  \bigg ]^2
\end{eqnarray*}

For each  $\p \in M$ in which  $\ep_{\alpha}(\p)$ is not a circle,
the  points maximum  and minimum  of this function determine  four  points
over the  ellipse of  curvature $\ep_{\alpha}(\p)$, which are their vertices,
located at the
large
 and
small
 axes.

\begin{figure}[ht]
       \centering  
       \includegraphics[height=5cm, width=11cm]{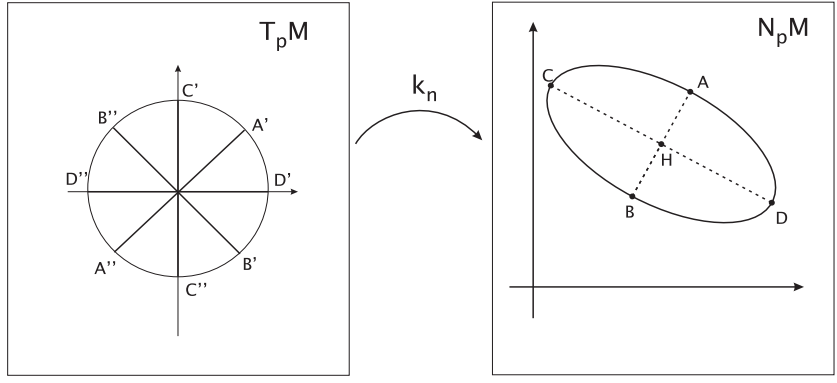}
\caption{Ellipse of Curvature $\ep_{\alpha}(\p)$ and Lines of Axial Curvature}
\label{elipse2}
\end{figure}

As illustrated in  Figure  \ref{elipse2}, to the small axis   $AB$
is associated the crossing  $A'A''B'B''$ and to the large axis  $CD$ is associated
the crossing
 $C'C''D'D''$.
 Thus, for each  $\p \in M$ at which the non-degenerate ellipse is not a circle or a point,
\textit{ two crossings}  are defined in  $T_{\p}M$, one  associated to
the large axis and the other to the small axis
of the  ellipse of  curvature.
These  \textit{Fields  of 2-Crossings} in  $M$ are called  \textit{Fields of  Axial  Curvature}.

Outside the set
 ${\Uu}_{\alpha}$ of points at which the ellipse of
curvature is a circle (i.e. has equal axes), called
\textit{Axiumbilic Points},
the lines and crossings are
said to be
\textit{Lines and Crossings of  Axial Curvature}. Those related to the large  (respectively small)  axis of
the ellipse  of curvature are called  \textit{Lines and Crossings of  Principal (respectively  Mean)  Axial Curvature}.

From the considerations above, the axial directions are defined by the equation

$$Jac(\|k_n-H\|^2, I_{\alpha})=0$$
which has four solutions for  $\p\notin  {\Uu}_{\alpha}$ and is singular at  $\p \in {\Uu}_{\alpha}$. According to  \cite{sotogarcia1} and  \cite{sotogarcia}, the  \textit{differential equation of axial lines}  is given by:
\begin{equation}
\label{quartica.axiais}
a_4 dy^4+a_3 dy^3 dx + a_2 dy^2 dx^2 + a_1 dy dx^3 + a_0 dx^4 = 0,
\end{equation}
where
\begin{eqnarray*}
a_4
&=&
-4F(EG-2F^2)(g_1^2+g_2^2)+4G(EG-4F^2)(f_1g_1+f_2g_2), \\
&+&
8FG^2(f_1^2+f_2^2)+4FG^2(e_1g_1+e_2g_2)-4G^3(e_1f_1+e_2f_2)
\\
\\
a_3
&=&
-4E(EG-4F^2)(g_1^2+g_2^2)-32EFG(f_1g_1+f_2g_2), \\
&+&
16EG^2(f_1^2+f_2^2)-4G^3(e_1^2+e_2^2)+8EG^2(e_1g_1+e_2g_2)
\\
\\
a_2
&=&
-12FG^2(e_1^2+e_2^2)+12E^2F(EG-4F^2)(g_1^2+g_2^2), \\
&+&
24EG^2(e_1f_1+e_2f_2)-24E^2G(f_1g_1+f_2g_2)
\\
\\
a_1
&=&
4E^3(g_1^2+g_2^2)+4G(EG-4F^2)(e_1^2+e_2^2) \\
&+&
32EFG(e_1f_1+e_2f_2)-16E^2G(f_1^2+f_2^2)-8E^2G(e_1g_1+e_2g_2),
\\
\\
a_0
&=&
4F(EG-2F^2)(e_1^2+e_2^2)-4E(EG-4F^2)(e_1f_1+e_2f_2) \\
&+&
-8E^2F(f_1^2+f_2^2)-4E^2F(e_1g_1+e_2g_2)+4E^3(f_1g_1+f_2g_2).
\end{eqnarray*}

\begin{prop}[\cite{sotogarcia1}, \cite{sotogarcia}]
\label{eq.dif.axial}
Let  $\alpha: M \fun \R^4$ be an immersion of class  ${\Cc}^r, \ r\geq 5$, of an oriented
 and smooth surface. Denote the first fundamental form of $\alpha$    by
$$I_{\alpha} = E dx^2 + 2Fdxdy + G dy^2$$
and the second fundamental form by:
$$II_{\alpha}= (e_1 dx^2+2f_1 dx dy+g_1 dy^2)N_1 + (e_2 dx^2+2f_2 dx dy+g_2 dy^2) N_2$$
where $\{N_1,N_2\}$  is an orthonormal frame.  
 
\begin{enumerate}[$i)$]
	\item  The differential equation of axial lines is given by:
\begin{eqnarray*}
    {\Gg}
&=& [a_0 G(EG-4F^2)+a_1 F(2F^2-EG)]dy^4 \\
&+& [-8a_0 EFG + a_1 E(4F^2-EG)]dy^3 dx \\
&+& [-6a_0 GE^2 + 3a_1 FE^2]dy^2 dx^2 +
  a_1 E^3 dy dx^3
+  a_0 E^3 dx^4 =0,
\end{eqnarray*}
where
\begin{eqnarray*}
		a_1
&=& 4G(EG-4F^2)(e_1^2 + e_2^2)
+ 32 EFG(e_1f_1+e_2f_2) \\
&+& 4E^3(g_1^2 + g_2^2)
- 8E^2G (e_1g_1+e_2g_2)
-  16E^2G (f_1^2 + f_2^2)
\end{eqnarray*}
and
\begin{eqnarray*}
		a_0
&=& 4F(EG-2F^2)(e_1^2 + e_2^2)
-  4E(EG-4F^2)(e_1f_1+e_2f_2) \\
&+& 4E^3(f_1g_1 + f_2g_2)
-  4E^2F (e_1g_1+e_2g_2) -
 8E^2F (f_1^2 + f_2^2).
\end{eqnarray*}

\item The axiumbilic points of  $\alpha$ are characterized by  $a_0=a_1=0$.
\end{enumerate}

\end{prop}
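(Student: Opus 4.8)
The plan is to derive this from the already–established axial equation $Jac(\|k_n-H\|^2,I_\alpha)=0$ --- equivalently, the quartic (\ref{quartica.axiais}) of \cite{sotogarcia1}, \cite{sotogarcia} --- by recasting it in a form that exhibits its two–parameter nature, and then reading off both claims. First I would introduce $P_i:=II^i_\alpha-h_i I_\alpha$, the $I_\alpha$–traceless (umbilic–free) part of the $i$‑th second fundamental form, and $Q:=P_1^2+P_2^2$, so that $\|k_n-H\|^2=Q/I_\alpha^2$. Since $Q$ and $I_\alpha$ are binary forms homogeneous of degrees $4$ and $2$, a short computation with Euler's identity gives $Jac(Q/I_\alpha^2,I_\alpha)=I_\alpha^{-2}\,Jac(Q,I_\alpha)$, so the axial lines are the zeros of the \emph{quartic} binary form $Jac(Q,I_\alpha)$; expanding $Q_{dx}=2P_1(P_1)_{dx}+2P_2(P_2)_{dx}$ and the analogous expression in $dy$ yields
\[
Jac(Q,I_\alpha)=2P_1\,Jac(P_1,I_\alpha)+2P_2\,Jac(P_2,I_\alpha),
\]
which, after substituting $h_i=(Eg_i-2Ff_i+Ge_i)/2(EG-F^2)$ and clearing denominators, is a fixed nonzero multiple of (\ref{quartica.axiais}).

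For part $(i)$ it then remains to identify (\ref{quartica.axiais}) with $\Gg$. The two lowest coefficients $a_0,a_1$ are literally those of $\Gg$, and I would check the three polynomial identities
\[
Ea_2=3Fa_1-6Ga_0,\qquad E^2a_3=(4F^2-EG)a_1-8FG\,a_0,\qquad E^3a_4=F(2F^2-EG)a_1+G(EG-4F^2)a_0
\]
by direct substitution of the explicit formulas for $a_0,\dots,a_4$; multiplying (\ref{quartica.axiais}) by $E^3$ then reproduces $\Gg$ coefficient by coefficient (note $E\ne0$, so this changes no zeros in $(dx,dy)$). The meaning of these identities becomes transparent in an isothermal chart $E=G=1$, $F=0$, where one finds $a_1=16(|\vec a|^2-|\vec b|^2)$, $a_0=-8\,\vec a\cdot\vec b$, $a_2=-6a_0$, $a_3=-a_1$, $a_4=a_0$ with $\vec a=\tfrac12(e_1-g_1,e_2-g_2)$ and $\vec b=(f_1,f_2)$; there $\Gg=0$ reads $a_0(dy^4-6\,dx^2dy^2+dx^4)+a_1(dx^3dy-dx\,dy^3)=0$, i.e. $4a_0\cos4\theta+a_1\sin4\theta=0$, which is exactly the equation locating the four vertices of the curvature ellipse $\theta\mapsto H+\vec a\cos2\theta+\vec b\sin2\theta$ of \cite{little}.

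For part $(ii)$: by definition $\p$ is axiumbilic precisely when $\ep_\alpha(\p)$ is a circle (possibly degenerate to a point), i.e. when $v\mapsto\|k_n(\p,v)-H(\p)\|$ is constant on $S^1_\p$, i.e. when $Q=c\,I_\alpha^2$ for some $c\ge0$. Since $I_\alpha$ is positive definite, its level curves in $T_\p M$ are connected ellipses, along which $Jac(Q,I_\alpha)\equiv0$ forces the degree‑zero function $Q/I_\alpha^2$ to be constant; by homogeneity this gives $Q=cI_\alpha^2$ with $c=\min(Q/I_\alpha^2)\ge0$, and the converse is immediate. Hence $\p\in\Uu_\alpha$ if and only if (\ref{quartica.axiais}) --- equivalently $\Gg$ --- vanishes identically at $\p$, i.e. $a_0(\p)=\dots=a_4(\p)=0$; and since $E\ne0$ (as $\alpha$ is an immersion), the identities of $(i)$ reduce this to $a_0(\p)=a_1(\p)=0$.

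The only step that is not routine is the verification of the three coefficient identities in $(i)$: a finite but bulky polynomial identity in $E,F,G,e_i,f_i,g_i$, whose content is nonetheless laid bare by the isothermal reduction above. Everything else --- the reduction $Jac(\|k_n-H\|^2,I_\alpha)=I_\alpha^{-2}Jac(Q,I_\alpha)$, the identity $Jac(Q,I_\alpha)=2\sum_iP_i\,Jac(P_i,I_\alpha)$, and the implication $Jac(Q,I_\alpha)\equiv0\Rightarrow Q=cI_\alpha^2$ from the positive‑definiteness of $I_\alpha$ --- is elementary.
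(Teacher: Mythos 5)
The paper does not prove Proposition \ref{eq.dif.axial}: it is imported verbatim from \cite{sotogarcia1} and \cite{sotogarcia}, so there is no in-text argument to compare yours against. Taken on its own terms, your reconstruction is sound and, in outline, correct. The decomposition $Q=P_1^2+P_2^2$ with $P_i=II^i_\alpha-h_iI_\alpha$, the reduction $Jac(Q/I_\alpha^2,I_\alpha)=I_\alpha^{-2}Jac(Q,I_\alpha)$, and the identity $Jac(Q,I_\alpha)=2\sum_iP_iJac(P_i,I_\alpha)$ are all valid and give a clean conceptual reason why the axial equation is a quartic binary form. Your three coefficient identities are exactly what is needed to pass from (\ref{quartica.axiais}) to ${\Gg}$ (they specialize correctly to the relations $a_1=-a_3$, $a_0=a_4=-a_2/6$ of the paper's isothermal remark), and your part $(ii)$ argument --- connectedness of the $I_\alpha$-level ellipses plus homogeneity forces $Q=cI_\alpha^2$ when the Jacobian vanishes identically, and then $E\neq0$ collapses the five conditions to $a_0=a_1=0$ --- is complete and is the right way to see why two equations suffice.

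Two caveats. First, the entire computational content of part $(i)$ sits in the three identities and in the claim that $Jac(Q,I_\alpha)$, after clearing denominators, is a fixed nonzero multiple of (\ref{quartica.axiais}); you assert both "by direct substitution" without carrying them out, so as written this is a correct proof skeleton rather than a finished proof. Second, if you do carry out that substitution against the paper's printed formulas you will run into trouble that is not your fault: I checked $Ea_2=3Fa_1-6Ga_0$ term by term and it holds except for the $(g_1^2+g_2^2)$ term, where the paper's displayed $a_2$ contains the factor $12E^2F(EG-4F^2)$, which is not even homogeneous of the right degree in $(E,F,G)$ (it should read $12E^2F$); similarly the isothermal remark's $a_1$ is off by a factor of $4$ from the Proposition's $a_1$ (your value $16(|\vec a|^2-|\vec b|^2)$ is the one consistent with the Proposition). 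So verify the identities against coefficients you recompute yourself from $Jac(\|k_n-H\|^2,I_\alpha)$, not against the printed $a_2,a_3,a_4$.
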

 The axiumbilic points are defined by the intersection of the curves
 $a_0(x,y)=0$ and $a_1(x,y)=0$.  Assume, with no lost
  of generality, that they intersect at   $(x,y)=(0,0)$. In this work it will be considered the case where the intersection
 is transversal or quadratic at  $(0,0)$.

Figure  \ref{transversalidade} illustrates the generic contact of the curves $a_0(x,y)=0$ and $a_1(x,y)=0$, whose intersection characterizes the axiumbilic points.

\begin{figure}[h]
       \centering  
       \fbox{\includegraphics[scale=0.39]{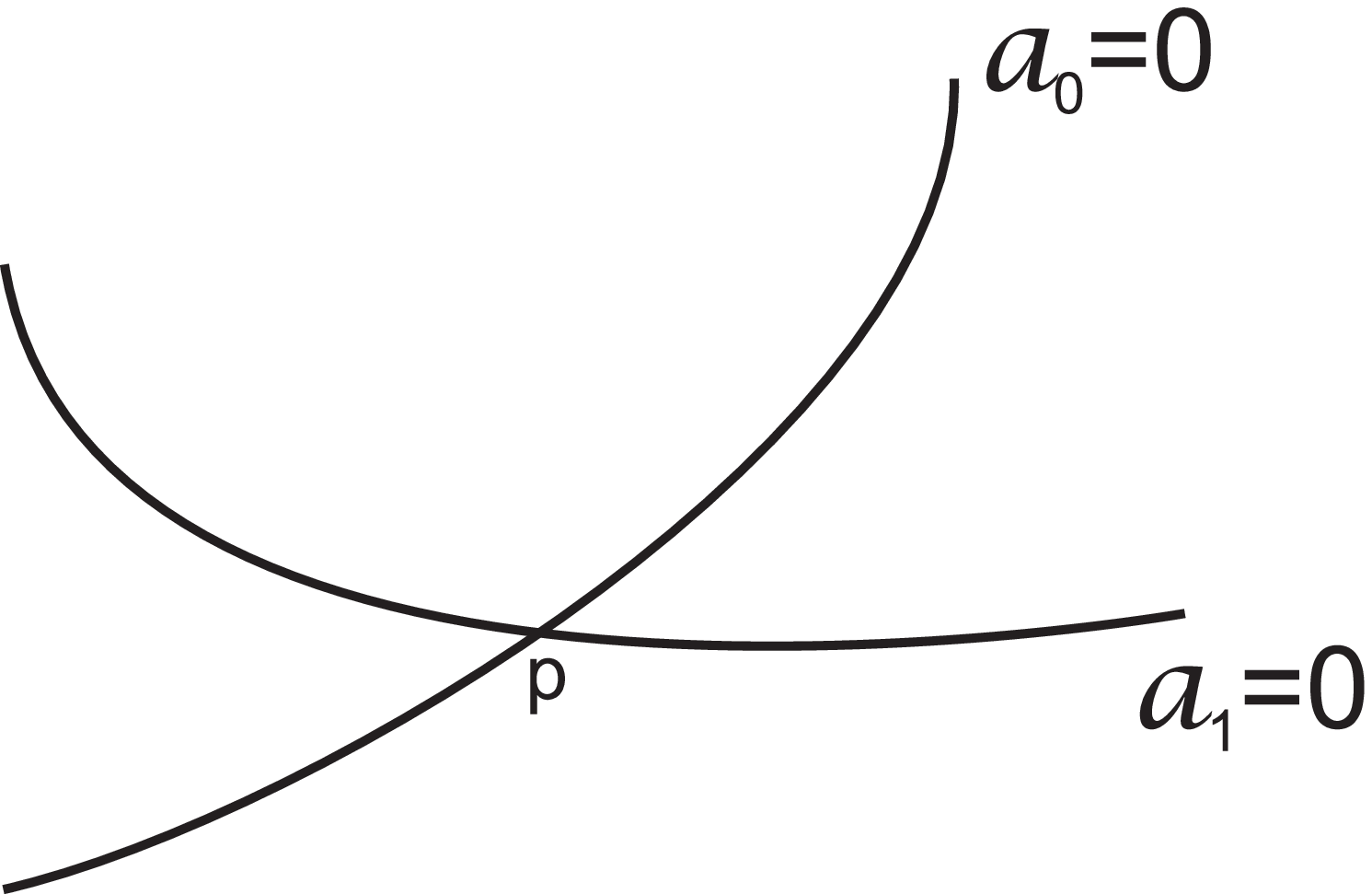}}
       \fbox{\includegraphics[scale=0.39]{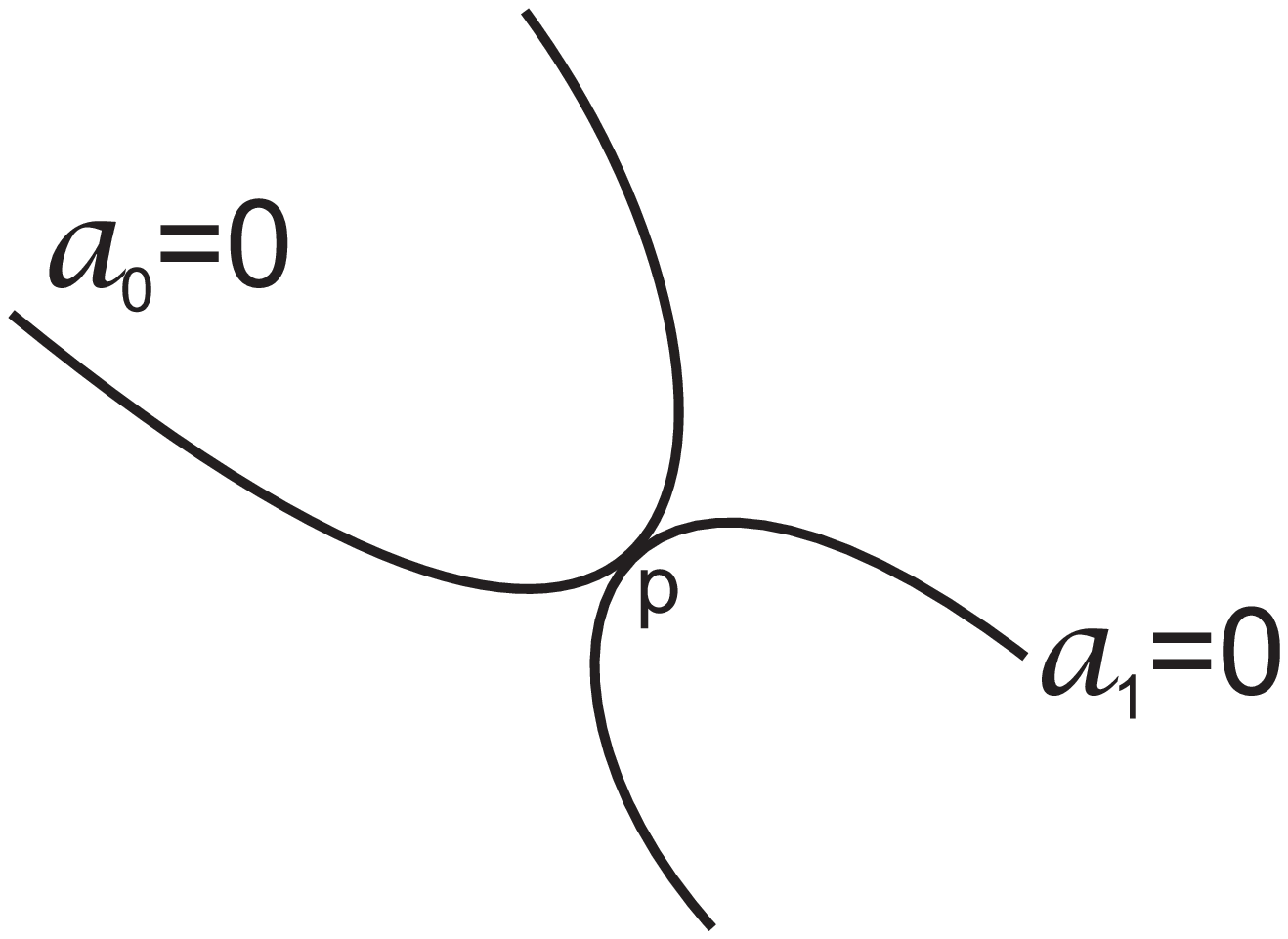}}
\caption{Transversal and quadratic contact between the curves  $a_0=0$  and $a_1=0$ at an axiumbilic point $\p$.}
\label{transversalidade}
\end{figure}

An axiumbilic point given by   $(x,y)=(0,0)$  is called \emph{transversal}  if
\begin{equation}
\label{determinante.transversal}
\frac{\partial (a_0,a_1)}{\partial(x,y)}\bigg |_{(0,0)} =
\left |
\begin{array}{cc}
	\frac{\partial a_0}{\partial x}(0,0) & \frac{\partial a_0}{\partial y}(0,0) \\
	\frac{\partial a_1}{\partial x}(0,0) & \frac{\partial a_1}{\partial y}(0,0)
\end{array}
\right |
\neq  0.
\end{equation}

The axiumbilic point given by
  $(x,y)=(0,0)$ is said to be  of \emph{  quadratic type} if  the matrix
\begin{equation}
\label{determinante.quadratico}
\frac{\partial (a_0,a_1)}{\partial(x,y)}\bigg |_{(0,0)} =
\left [
\begin{array}{cc}
	\frac{\partial a_0}{\partial x}(0,0) & \frac{\partial a_0}{\partial y}(0,0) \\
	\frac{\partial a_1}{\partial x}(0,0) & \frac{\partial a_1}{\partial y}(0,0)
\end{array}
\right ]
\end{equation}
has rank  $1$ and, assuming $\frac{\partial a_0}{\partial y}(0,0) \neq 0$,
it follows from the Implicit Function Theorem that     $y(x)$ is a local solution of $a_0(x,y(x))=0$. Writing  $s(x)=a_1(x,y(x))$ it follows that $s'(0)=0$ and $s''(0)\neq 0$.

A similar analysis can be carried out if other element of the matrix
  $\frac{\partial (a_0,a_1)}{\partial(x,y)}\bigg |_{(0,0)}$ is non zero.

\begin{obs}[\cite{sotogarcia1}]
In isothermic coordinates, where $E=G$ and $F=0$, it follows that
$$a_1=-a_3=E^3[e_1^2+e_2^2+g_1^2+g_2^2-4(f_1^2+f_2^2)-2(e_1g_1+e_2g_2)]$$
$$a_0=a_4=-\frac{a_2}{6}=4E^3[f_1g_1+f_2g_2-(e_1f_1+e_2f_2)]$$
and the differential equation of axial lines is  simplified to

\begin{equation}
\label{axiais}
a_0(x,y)(dx^4-6dx^2dy^2+dy^4)+a_1(x,y)(dx^2-dy^2)dx dy=0.
\end{equation}

\end{obs}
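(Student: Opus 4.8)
The plan is to derive the Remark by a direct substitution: insert the isothermic conditions $F=0$ and $E=G$ into the coefficients $a_0,\dots,a_4$ of the quartic \eqref{quartica.axiais} (equivalently, into the coefficients appearing in Proposition \ref{eq.dif.axial}), simplify, and then factor the resulting homogeneous quartic in $dx,dy$.

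First I would set $F=0$ in the five displayed expressions for $a_0,\dots,a_4$: every monomial carrying an explicit factor $F$ then drops, leaving in each $a_i$ only the terms whose scalar coefficient lies among $E^3,\,E^2G,\,EG^2,\,G^3$. Next I would impose $E=G$; then $EG-4F^2=EG-2F^2=E^2$, each surviving monomial in $E,G$ becomes $E^3$, and collecting like terms in the $e_i,f_i,g_i$ gives
\[
a_4=a_0=4E^3\bigl[f_1g_1+f_2g_2-(e_1f_1+e_2f_2)\bigr],\qquad a_2=-6a_0,
\]
\[
a_1=-a_3=4E^3\bigl[e_1^2+e_2^2+g_1^2+g_2^2-4(f_1^2+f_2^2)-2(e_1g_1+e_2g_2)\bigr],
\]
which are the stated formulas (up to the common scalar factor, irrelevant for the differential equation). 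As an internal check, one may run the same substitution on the reduced coefficients $a_0,a_1$ of Proposition \ref{eq.dif.axial}, obtaining the same two bracketed polynomials, and also substitute $F=0$, $E=G$ directly in the reduced equation $\Gg$, whose five coefficients then become $E^3$ times $a_0,\,-a_1,\,-6a_0,\,a_1,\,a_0$. The only delicate point in all of this is the bookkeeping of the powers of $E,G$ and of the signs before everything collapses to multiples of $E^3$; it carries no conceptual content but is where an arithmetic slip is most likely, so I would organize the reduction $a_i$ by $a_i$, isolating in each the coefficient of $(e_1^2+e_2^2)$, of $(f_1g_1+f_2g_2)$, etc., before substituting.

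Finally, feeding $a_4=a_0$, $a_3=-a_1$, $a_2=-6a_0$ into \eqref{quartica.axiais} yields
\begin{align*}
& a_0\,dy^4-a_1\,dy^3\,dx-6a_0\,dy^2\,dx^2+a_1\,dy\,dx^3+a_0\,dx^4 \\
&\qquad = a_0\bigl(dx^4-6\,dx^2\,dy^2+dy^4\bigr)+a_1\,dx\,dy\,(dx^2-dy^2),
\end{align*}
since the three $a_0$-terms combine into $a_0(dx^4-6dx^2dy^2+dy^4)$ and $a_1\,dy\,dx^3-a_1\,dx\,dy^3=a_1\,dx\,dy\,(dx^2-dy^2)$. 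Equating this to zero is precisely equation \eqref{axiais}, which finishes the verification.
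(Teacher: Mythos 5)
Your verification is correct and is exactly the computation the Remark rests on (the paper itself only cites \cite{sotogarcia1} and performs no argument, so direct substitution of $F=0$, $E=G$ into the coefficients $a_0,\dots,a_4$ followed by the factorization of the quartic is the intended route). Your bookkeeping is right --- in particular you correctly obtain $a_1=-a_3=4E^3[\,e_1^2+e_2^2+g_1^2+g_2^2-4(f_1^2+f_2^2)-2(e_1g_1+e_2g_2)\,]$ \emph{with} the factor $4$, so the Remark's displayed $a_1=E^3[\cdots]$ is off by that overall factor relative to its own $a_0=4E^3[\cdots]$; as you note, this is harmless for the zero set \eqref{axiais} once both coefficients are scaled consistently.
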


\subsection{Axial Configurations of immersed surfaces in $\R^4$}

Let ${\Ii}^r = {\Ii}^r(M,\R^4)$ the set of immersions of class ${\Cc}^r$. For $\alpha \in {\Ii}^r$, the differential equation of axial lines is well defined (equation (\ref{quartica.axiais})):
\begin{equation}
\label{quartica.axiais.2}
{\Gg}(x,y,dx,dy) = a_4 dy^4+a_3 dy^3 dx + a_2 dy^2 dx^2 + a_1 dy dx^3 + a_0 dx^4 = 0
\end{equation}
in the projective bundle  $PM$ of  $M$.

For each $\alpha \in {\Ii}^r$, define the \textit{  Lie-Cartan surface } of the immersion $\alpha$ by ${\LL}_{\alpha}:={\Gg}_{\alpha}^{-1}(0)$, which is of class  ${\Cc}^{r-2}$, regular in      $M-{\Uu}_{\alpha}$ and may present singularities at ${\Uu}_{\alpha}$. Moreover, as the differential equation  (\ref{quartica.axiais.2})  is quartic and contains the projective line at    ${\Uu}_{\alpha}$, it follows that   ${\LL}_{\alpha}$ is a ramified covering of degree  $4$ in  $M-{\Uu}_{\alpha}$ and  contains the projective line $\pi^{-1}(\p)$ for each $\p \in {\Uu}_{\alpha}$.

In the chart $(x,y,p)$, with $p=\frac{dy}{dx}$,  equation  (\ref{quartica.axiais.2}) is given by
\begin{equation}
\label{quartica.axiais.p}
{\Gg}(x,y,p)=a_4 p^4+a_3 p^3+a_2 p^2+a_1 p + a_0 = 0.
\end{equation}

Consider the  \textit{  Lie-Cartan vector field}  $X_{\alpha}$, of class  ${\Cc}^{r-3}$, tangent  to the surface  ${\Gg}=0$
\begin{equation}
\label{campo.liecartan}
X_{\alpha}:={\Gg}_p \frac{\partial}{\partial x}+ p{\Gg}_p \frac{\partial}{\partial y}-({\Gg}_x+p{\Gg}_y) \frac{\partial}{\partial p}.
\end{equation}

The  \textit{axial curvature lines}  are the projections by $\pi: PM \fun M$ restricted to ${\LL}_{\alpha}$, of the integral curves of
  $X_{\alpha}$.
  
  See illustration in Figure  \ref{superficie.lie.cartan}.
 For each $\p \in M-{\Uu}_{\alpha}$  there are  $4$ well defined axial directions, given the four roots of
 equation   (\ref{quartica.axiais.p}).

Two {\it axial configurations} are given: \textit{Principal axial configuration} ${\Pp}_{\alpha} = \{{\Uu}_{\alpha},{\Xx}_{\alpha}\}$ defined by the axiumbilic points   ${\Uu}_{\alpha}$ and by the net ${\Xx}_{\alpha}$ (related to the crossing of principal axial curvature),  in $M-{\Uu}_{\alpha}$ and  \textit{Mean axial configuration} ${\Qq}_{\alpha} = \{{\Uu}_{\alpha},{\Yy}_{\alpha}\}$ defined by the axiumbilic  points ${\Uu}_{\alpha}$ and the net ${\Yy}_{\alpha}$ (related to the crossing of mean axial curvature), in $M-{\Uu}_{\alpha}$.

\begin{figure}[h]
       \centering   
       \fbox{
       \includegraphics[scale=0.35]{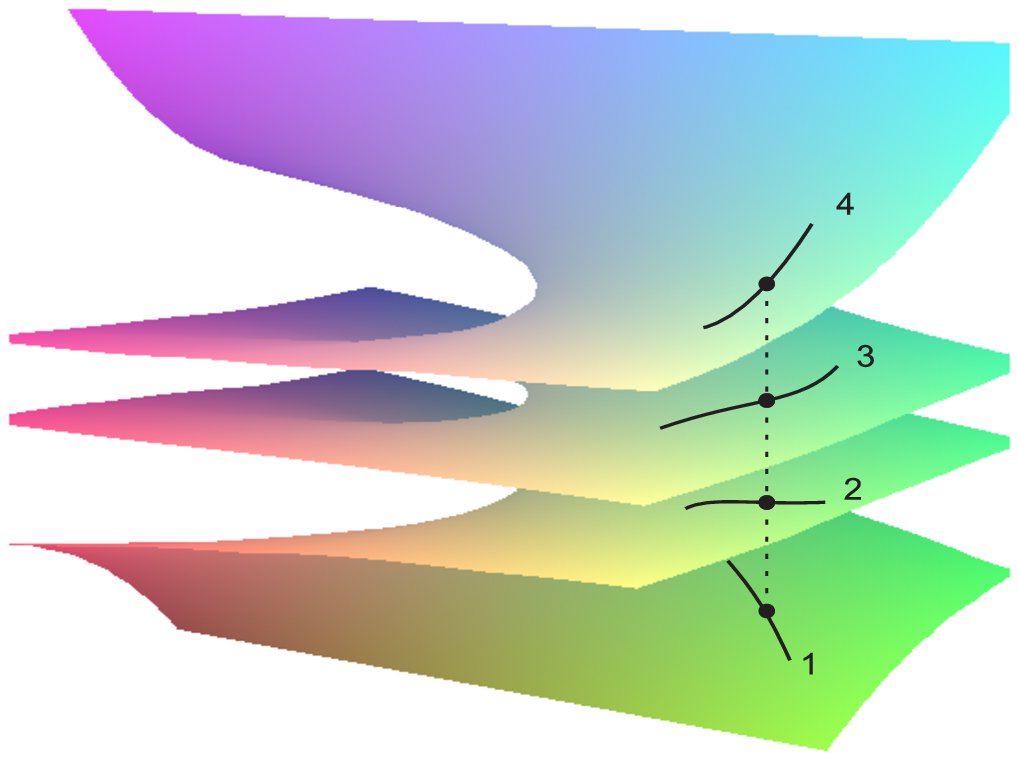}
       \includegraphics[scale=0.35]{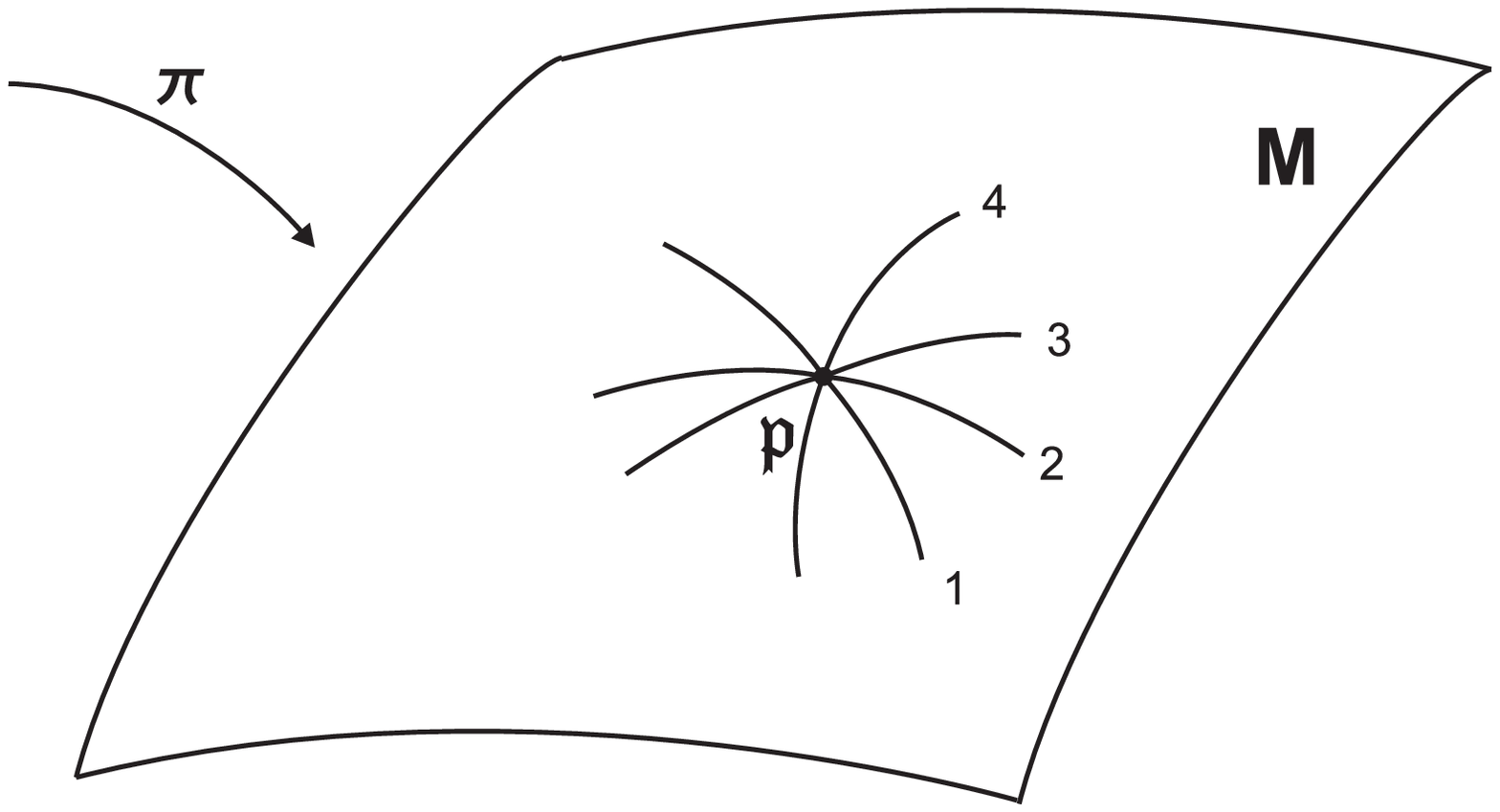}
       }
\caption{Projection on  $M$  of the integral curves of the Lie-Cartan vector field tangent to ${\LL}_{\alpha}$ in a neighborhood of  $\p \in M-{\Uu}_{\alpha}$. For each point in $M$ pass four lines, associated
in pairs to the axis of the ellipse.}
\label{superficie.lie.cartan}
\end{figure}

\section{Differential Equation  of Axial Lines in a Monge Chart }\label{sec:2}

The surface   $M$ will be locally parametrized by a Monge chart near an axiumbilic point $\p$  as follows
$$
\begin{array}{r}
z=R(x,y),\\
w=S(x,y),
\end{array} 
$$
where
\begin{equation}
\label{Rx}
\aligned
R(x,y)=& \frac{r_{20}}{2}x^2+r_{11}xy+\frac{r_{02}}{2}y^2+\frac{r_{30}}{6}x^3+\frac{r_{21}}{2}x^2y+\frac{r_{12}}{2}xy^2+\frac{r_{03}}{6}y^3\\
+ &\frac{r_{40}}{24}x^4+\frac{r_{31}}{6}x^3 y+\frac{r_{22}}{4}x^2 y^2+ \frac{r_{13}}{6}xy^3+\frac{r_{04}}{24}y^4+h.o.t.,\endaligned
\end{equation}

\begin{equation}
\label{S}\aligned
S(x,y)= &\frac{s_{20}}{2}x^2+s_{11}xy+\frac{s_{02}}{2}y^2+ \frac{s_{30}}{6}x^3+\frac{s_{21}}{2}x^2y+\frac{s_{12}}{2}xy^2+\frac{s_{03}}{6}y^3\\
+ &\frac{s_{40}}{24}x^4+\frac{s_{31}}{6}x^3 y+\frac{s_{22}}{4}x^2 y^2+ \frac{s_{13}}{6}xy^3+\frac{s_{04}}{24}y^4+h.o.t.\endaligned
\end{equation}

At the point $(x,y,R(x,y),S(x,y))$, the tangent plane to the surface is generated by $\{t_1,t_2\}$,
where $t_1=(1,0,R_x,S_x)$ and $t_2=(0,1,R_y,S_y)$.
The normal plane is generated by
  $\{N_1,N_2\}$, where $N_1=\frac{\widetilde{N_1}}{|\widetilde{N_1}|}$ and $N_2=\frac{\widetilde{N_2}}{|\widetilde{N_2}|}$ are defined by  $\widetilde{N_1}=(-R_x,-R_y,1,0)$ and $\widetilde{N_2}= t_1 \wedge t_2 \wedge \widetilde{N_1}$.
   Here $\wedge$ is the
   exterior  or \textit{wedge}  product of three vectors in $\R^4$,
   defined by the equation:
$$\det(t_1,t_2,\widetilde{N_1},\bullet)=\lp \widetilde{N_2},\bullet \rp. $$

From the expressions of  $R$ and  $S$ given by  equations (\ref{Rx}) and  (\ref{S}), it follows that:

$$E=1+O(2),  \hspace{1cm} \ F=O(2), \hspace{1cm} \ G=1+O(2),$$
and
$$
\begin{array}{cc}
e_1=r_{20}+r_{30}x+r_{21}y+O(2),&e_2=s_{20}+s_{30}x+s_{21}y+O(2), \\
f_1=r_{11}+r_{21}x+r_{12} y+O(2),&f_2=s_{11}+s_{21}x+s_{12} y+O(2), \\
g_1=r_{02}+r_{12} x+r_{03}y+O(2),&g_2=s_{02}+s_{12} x+s_{03}y+O(2).
\end{array}
$$

The axiumbilic points are defined by    $a_0(x,y)=0$ and $a_1(x,y)=0$. So, in a neighborhood of   $(0,0)$, it follows that
\begin{equation}
\label{a0}
a_0(x,y)= a^0_{00} + a^0_{10} x + a^0_{01} y + O(2)
\end{equation}
and
\begin{equation}
\label{a1}
a_1(x,y)= a^1_{00} + a^1_{10} x + a^1_{01} y + O(2),
\end{equation}
where
\begin{eqnarray*}
a^0_{00} &=& r_{11}(r_{02} - r_{20}) + s_{11}(s_{02} - s_{20}), \\
a^0_{10}&=& r_{21}(r_{02} - r_{20}) + r_{11}(r_{12}-r_{30})+s_{11}(s_{12}-s_{30})+s_{21}(s_{02}-s_{20}), \\
a^0_{01} &=& r_{12}(r_{02} - r_{20}) + r_{11}(r_{03}-r_{21})+s_{11}(s_{03}-s_{21})+s_{12}(s_{02}-s_{20})
\end{eqnarray*}
and
{\small
\begin{eqnarray*}
a^1_{00} &=& (r_{02} - r_{20})^2 + (s_{02} - s_{20})^2-4(r_{11}^2 + s_{11}^2), \\
a^1_{10} &=& 2 (r_{12}-r_{30})(r_{02}-r_{20}) + 2 (s_{12}-s_{30})(s_{02}-s_{20})-8(r_{21}r_{11}+s_{21}s_{11}),\\
a^1_{01} &=& 2 (r_{03}-r_{21})(r_{02}-r_{20}) + 2 (s_{03}-s_{21})(s_{02}-s_{20})-8(r_{12}r_{11}+s_{12}s_{11}).
\end{eqnarray*}
}
Therefore a point $\p$, expressed in a Monge chart by $(0,0)$, is an axiumbilic point when the following relations
hold.

\begin{equation}
\label{axiumbilico}
\left \{
\begin{array}{l}
a^0_{00} = r_{11}(r_{02} - r_{20}) + s_{11}(s_{02} - s_{20}) = 0,\\
a^1_{00} = (r_{02} - r_{20})^2 + (s_{02} - s_{20})^2-4(r_{11}^2 + s_{11}^2)= 0.\hspace{2cm}
\end{array}
\right .
\end{equation}

Algebraic manipulations of the equations above, see \cite{gutierrez1}, show that
  $(0,0)$ is  an axiumbilic  point when the following equations hold

\begin{equation}
\label{axiumbilico.simples}
\left \{
\begin{array}{lcl}
2r_{11}=(s_{02}-s_{20}), \\
2s_{11}=-(r_{02}-r_{20},)
\end{array}
\right.
\hspace{2cm} \mbox{or}\hspace{2cm}
\left \{
\begin{array}{lcl}
2r_{11}=-(s_{02}-s_{20}), \\
2s_{11}=(r_{02}-r_{20}).
\end{array}
\right.
\end{equation}

\begin{obs}
Let  $r_{02}=r_{20}+r$ and $s_{02}=s_{20}+s$, $\rho^2=r^2_{11} + s^2_{11}$. Then the condition for  $(0,0)$  to be an axiumbilic point, see equation  (\ref{axiumbilico}),  is given by
\begin{equation}
\label{axiumbilico.rs}
\left \{
\begin{array}{l}
r_{11}\cv r + s_{11}\cv s = 0, \\
r^2 + s^2 = 4\rho^2 .\hspace{2cm}
\end{array}
\right .
\end{equation}
\end{obs}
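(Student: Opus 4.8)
The plan is to derive the equivalences in the Remark directly from the axiumbilic equations (\ref{axiumbilico}), using the substitution $r_{02}=r_{20}+r$, $s_{02}=s_{20}+s$ and $\rho^2 = r_{11}^2 + s_{11}^2$. First I would rewrite the two defining equations $a^0_{00}=0$ and $a^1_{00}=0$ in terms of $r,s,r_{11},s_{11}$: since $r_{02}-r_{20}=r$ and $s_{02}-s_{20}=s$, the first equation becomes $r_{11}r + s_{11}s = 0$, and the second becomes $r^2 + s^2 - 4(r_{11}^2 + s_{11}^2) = 0$, i.e. $r^2 + s^2 = 4\rho^2$. This is exactly the assertion of the Remark, so the content of the proof is simply the observation that the change of variables is a relabeling and that equation (\ref{axiumbilico}) translates verbatim into equation (\ref{axiumbilico.rs}).

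Since the Remark is essentially a restatement, the only genuine step is to make explicit the passage from (\ref{axiumbilico}) to (\ref{axiumbilico.rs}): substitute, simplify, and record that $\rho^2 = r_{11}^2 + s_{11}^2$ is by definition the quantity appearing on the right of the second equation. I would present this in two or three lines: the first equation of (\ref{axiumbilico}) reads $r_{11}(r_{02}-r_{20}) + s_{11}(s_{02}-s_{20}) = r_{11}r + s_{11}s$, which vanishes iff $r_{11}r + s_{11}s = 0$; the second reads $(r_{02}-r_{20})^2 + (s_{02}-s_{20})^2 - 4(r_{11}^2+s_{11}^2) = r^2 + s^2 - 4\rho^2$, which vanishes iff $r^2 + s^2 = 4\rho^2$.

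For completeness I would also note how this connects with the alternative form (\ref{axiumbilico.simples}): the system $r_{11}r + s_{11}s = 0$ together with $r^2+s^2 = 4\rho^2 = 4(r_{11}^2+s_{11}^2)$ is solved (geometrically, $(r,s)$ is orthogonal to $(r_{11},s_{11})$ and has twice its length) by $(r,s) = \pm 2(-s_{11}, r_{11})$, which gives precisely $2r_{11} = \pm(s_{02}-s_{20})$, $2s_{11} = \mp(r_{02}-r_{20})$ — the two branches in (\ref{axiumbilico.simples}). There is no real obstacle here: the statement is elementary algebra, and the only thing to be careful about is the sign bookkeeping in identifying the two solution branches, which is a routine check rather than a difficulty.

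\begin{obs}
\textbf{Proof.} With $r_{02}=r_{20}+r$ and $s_{02}=s_{20}+s$, one has $r_{02}-r_{20}=r$ and $s_{02}-s_{20}=s$. Substituting into the two equations (\ref{axiumbilico}) that characterize an axiumbilic point, the first becomes
$$a^0_{00} = r_{11}(r_{02}-r_{20}) + s_{11}(s_{02}-s_{20}) = r_{11}\cv r + s_{11}\cv s,$$
so $a^0_{00}=0$ is equivalent to $r_{11}\cv r + s_{11}\cv s = 0$. The second becomes
$$a^1_{00} = (r_{02}-r_{20})^2 + (s_{02}-s_{20})^2 - 4(r_{11}^2 + s_{11}^2) = r^2 + s^2 - 4\rho^2,$$
using $\rho^2 = r_{11}^2 + s_{11}^2$, so $a^1_{00}=0$ is equivalent to $r^2 + s^2 = 4\rho^2$. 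This establishes (\ref{axiumbilico.rs}). Finally, the equations $r_{11}\cv r + s_{11}\cv s = 0$ and $r^2 + s^2 = 4\rho^2 = 4(r_{11}^2+s_{11}^2)$ force $(r,s) = \pm 2(-s_{11}, r_{11})$, which recovers the two branches of (\ref{axiumbilico.simples}). $\hfill\rule{1.2ex}{1.2ex}$
\end{obs}
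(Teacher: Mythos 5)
Your proof is correct and follows essentially the same route as the paper, which presents the Remark as an immediate consequence of substituting $r_{02}-r_{20}=r$, $s_{02}-s_{20}=s$ and $\rho^2=r_{11}^2+s_{11}^2$ into equation (\ref{axiumbilico}); your closing observation that the system describes a line meeting a circle in the $(r,s)$-plane, yielding the two branches of (\ref{axiumbilico.simples}), is exactly the interpretation the paper records right after the Remark.
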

\nin These  condition for being an axiumbilic point
can be interpreted as the intersection of  a circle and a straight line in the plane   $(r,s)$. The intersections are given by
\begin{equation}
\label{axiumbilico.simples.rs}
\left \{
\begin{array}{lcl}
r_{11}=\frac{s}{2}, \\
s_{11}=-\frac{r}{2},
\end{array}
\right .
\hspace{2cm} \mbox{or}\hspace{2cm}
\left \{
\begin{array}{lcl}
r_{11}=-\frac{s}{2}, \\
s_{11}=\frac{r}{2},
\end{array}
\right .
\end{equation}
\nin and therefore
 equation  (\ref{axiumbilico.simples.rs})
is another
form
of equation (\ref{axiumbilico.simples}).

Let
$$\aligned \alpha_1=&s_{12} -s_{30}+2 r_{21},\; \alpha_2= r_{30}-r_{12}+2s_{21},\\
 alpha_3=& s_{03}-s_{21}+2r_{12},\;  \alpha_4= r_{21}-r_{03 }+2s_{12}.\endaligned$$

The discussion above
is synthesized  in
the following lemma.
\begin{lema}
\label{lema.eq.axial.monge}
Let $\p$ be an axiumbilic point with coordinates  $(0,0)$
in a Monge chart.
The differential equation of axial lines in a neighborhood of $(0,0)$ is given by
\begin{equation}
\label{eq.axial.monge}
\tilde{a}_0(x,y) (dx^4-6dx^2dy^2+dy^4)+\tilde{a}_1(x,y)(dx^2-dy^2)dx dy + H(x,y,dx,dy)=0,
\end{equation}
where
\begin{equation}
\label{eq:a0a1}
\aligned
 \tilde{a}_0(x,y) = & \frac{1}2 (r\alpha_1+ s\alpha_2)x+\frac 12(r\alpha_3+ s\alpha_4)y + a^0_{20} x^2 + a^0_{11} x y + a^0_{02} y^2 ,\\
\tilde{a}_1(x,y) =& 2(s\alpha_1-r\alpha_2)x+2(s\alpha_3-r\alpha_4)y + a^1_{20} x^2 + a^1_{11} x y + a^1_{02} y^2\endaligned
\end{equation}
and  $H$ contains terms of order
 greater  than
 or equal to    $3$ in $(x,y)$.

\end{lema}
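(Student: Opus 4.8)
The plan is to specialize the general axial‐line equation of Proposition \ref{eq.dif.axial}(i) to the Monge chart, using the Taylor expansions of the metric and second fundamental form coefficients given just above, and then impose the axiumbilicity relations (\ref{axiumbilico.simples}) (equivalently (\ref{axiumbilico.simples.rs})) to kill the constant terms of $a_0$ and $a_1$. Concretely, first I would substitute $E = 1 + O(2)$, $F = O(2)$, $G = 1 + O(2)$ into the expressions for $a_0$ and $a_1$ from the proposition; since the leading corrections to $E,F,G$ are quadratic in $(x,y)$, the linear parts of $a_0$ and $a_1$ come entirely from taking $E = G = 1$, $F = 0$ there. That reduces the computation to the isothermic‐type form already recorded in the Remark following Proposition \ref{eq.dif.axial}: $a_1 \approx e_1^2 + e_2^2 + g_1^2 + g_2^2 - 4(f_1^2 + f_2^2) - 2(e_1 g_1 + e_2 g_2)$ and $a_0 \approx 4[f_1 g_1 + f_2 g_2 - (e_1 f_1 + e_2 f_2)]$, up to a factor $E^3 = 1 + O(1)$ whose effect on the linear part is again only through the constant term of $E^3$, which is $1$. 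This gives the normalized coefficients $\tilde a_0, \tilde a_1$ and simultaneously explains why the quartic takes the simplified shape $\tilde a_0(dx^4 - 6dx^2dy^2 + dy^4) + \tilde a_1(dx^2 - dy^2)dx\,dy + H$, with $H$ collecting the genuinely higher‐order contributions (those coming from the $O(2)$ corrections to $E,F,G$ and from products).

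Next I would plug in the Taylor coefficients $e_i = r_{20} + r_{30}x + r_{21}y + O(2)$ etc.\ (and the analogous ones with $s$) and expand to first order in $(x,y)$. The constant term of $\tilde a_0$ is $r_{11}(r_{02} - r_{20}) + s_{11}(s_{02} - s_{20}) = a^0_{00}$ and that of $\tilde a_1$ is $(r_{02}-r_{20})^2 + (s_{02}-s_{20})^2 - 4(r_{11}^2 + s_{11}^2) = a^1_{00}$; both vanish by the axiumbilicity hypothesis (\ref{axiumbilico}). So $\tilde a_0$ and $\tilde a_1$ start at first order. The coefficients $a^0_{10}, a^0_{01}, a^1_{10}, a^1_{01}$ are already displayed in (\ref{a0})–(\ref{a1}); the remaining task is to rewrite them in terms of $r = r_{02} - r_{20}$, $s = s_{02} - s_{20}$ and the combinations $\alpha_1,\dots,\alpha_4$ after substituting one of the two branches of (\ref{axiumbilico.simples}), i.e.\ $2r_{11} = \pm s$, $2s_{11} = \mp r$. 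A direct substitution into $a^0_{10} = r_{21}(r_{02}-r_{20}) + r_{11}(r_{12}-r_{30}) + s_{11}(s_{12}-s_{30}) + s_{21}(s_{02}-s_{20})$ should collapse, using $2r_{11} = s$, $2s_{11} = -r$, to $\tfrac12\big(r_{21}\cdot 2r + r(r_{12}-r_{30})\cdot(?)\dots\big)$ — more precisely to $\tfrac12(r\alpha_1 + s\alpha_2)$ once the terms are grouped as $r_{21}r + s_{21}s$ together with $\tfrac12 s(r_{12}-r_{30}) - \tfrac12 r(s_{12}-s_{30})$, matching the definition $\alpha_1 = s_{12} - s_{30} + 2r_{21}$, $\alpha_2 = r_{30} - r_{12} + 2s_{21}$ after a sign/relabeling check. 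The same bookkeeping on $a^0_{01}$ yields $\tfrac12(r\alpha_3 + s\alpha_4)$ with $\alpha_3 = s_{03} - s_{21} + 2r_{12}$, $\alpha_4 = r_{21} - r_{03} + 2s_{12}$, and on $a^1_{10}, a^1_{01}$ — which already carry an explicit factor $2$ — one gets $2(s\alpha_1 - r\alpha_2)$ and $2(s\alpha_3 - r\alpha_4)$ respectively. The quadratic coefficients $a^0_{20}, a^0_{11}, a^0_{02}, a^1_{20}, a^1_{11}, a^1_{02}$ are simply named, not computed, so nothing further is needed for them; they are absorbed into the stated quadratic parts (and any quadratic cross‐terms from $E^3$ or from $F$ that do not fit the pure $(dx^4 - 6dx^2dy^2 + dy^4)$/$(dx^2-dy^2)dx\,dy$ pattern go into $H$, which by construction has order $\ge 3$ in $(x,y)$).

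I would close by noting that the argument is symmetric under the interchange of the two branches in (\ref{axiumbilico.simples}): replacing $(r_{11}, s_{11})$ by $(-r_{11}, -s_{11})$ corresponds to $(r,s) \mapsto (-r,-s)$ in (\ref{axiumbilico.simples.rs}) composed with a reflection, and the displayed formulas for $\tilde a_0, \tilde a_1$ are invariant under this sign change (both $r\alpha_i + s\alpha_j$ and $s\alpha_i - r\alpha_j$ are odd–odd in the right way), so the conclusion of the lemma holds regardless of which branch the point lies on. The main obstacle is purely computational: the honest expansion of $a_0$ and $a_1$ — even in the simplified isothermic form — in terms of the $r_{ij}, s_{ij}$, carried to first order, and then the algebraic regrouping under the constraint $2r_{11} = \pm s$, $2s_{11} = \mp r$ into the compact $\alpha_i$‐combinations; one has to be careful that the $O(2)$ pieces of $E, F, G$ and the normalization factor $E^3$ really do not contaminate the linear coefficients, and that every term not of the two privileged angular types is legitimately of order $\ge 3$ and hence belongs in $H$. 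No conceptual difficulty is expected beyond keeping the signs and indices straight.
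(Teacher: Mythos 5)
Your proposal is correct and follows essentially the same route as the paper, which gives no separate proof of this lemma but presents it as the synthesis of the immediately preceding computation: specialize Proposition \ref{eq.dif.axial} to the Monge chart (where the $O(2)$ corrections to $E,F,G$ only enter the quartic multiplied by $a_0$ or $a_1$, which themselves vanish at the origin, so they contribute only at order $\geq 3$), kill the constant terms $a^0_{00},a^1_{00}$ by the axiumbilic relations, and regroup the linear coefficients of (\ref{a0})--(\ref{a1}) into the $\alpha_i$ combinations. One small correction: the displayed formulas correspond to the specific branch $2r_{11}=-s$, $2s_{11}=r$ of (\ref{axiumbilico.simples}) (the normalization the paper fixes explicitly before equations (\ref{Rmonge})--(\ref{Smonge})); switching branches flips the signs of $r_{11},s_{11}$ while leaving $r,s$ fixed, so it is not the substitution $(r,s)\mapsto(-r,-s)$ and the stated coefficients are not literally invariant under it --- the other branch is reached by an orientation-reversing change of frame rather than by the symmetry you invoke.
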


With the notation
in equation (\ref{eq.axial.monge}),
the {\it condition of transversality} between the curves
  $a_0=0$ and $a_1=0$ is given by
\begin{equation*}
\label{determinante}
\left |
\begin{array}{cc}
a^0_{10} & a^0_{01} \\
a^1_{10} & a^1_{01}
\end{array}
\right |
\neq 0.
\end{equation*}

\nin The  determinant above has the following expression:
\begin{equation*}
 [\alpha_2\alpha_3-\alpha_1\alpha_4]\cv (r^2+s^2),
\end{equation*}
where $r=r_{02}-r_{20}$ and $s=s_{02}-s_{20}$. If $(r^2+s^2)$ is zero, it follows that $a^0_{10}=a^0_{01}=a^1_{10}=a^1_{01}=0$, and therefore the matrix
$$
\left [
\begin{array}{cc}
a^0_{10} & a^0_{01} \\
a^1_{10} & a^1_{01}
\end{array}
\right ]
$$
is identically zero.
Thus the axiumbilic points with $r=s=0$  form a set of codimension at least four.

 Therefore, the {\it condition of transversality}, supposing $r^2+s^2\ne 0$, is given by:

\begin{equation}
\label{determinante2}
T:=  \alpha_2\alpha_3-\alpha_1\alpha_4   \neq 0.
\end{equation}

Long, but straightforward calculations show
 that the condition
  (\ref{determinante2}) is invariant by positive rotations in the tangent and in the normal plane.

\begin{lema}\label{lema:rota} Consider the quartic differential equation
$$ (a_{10}x+a_{01}y)(dx^4-6dx^2dy^2+dy^4)+(b_{10}x+b_{01}y)dxdy(dx^2-dy^2)=0.$$
Consider a rotation $x=\cos\theta u+\sin\theta v, \; y=-\sin\theta u+\cos\theta v$,
where $\theta $ is a real root of   the equation
{\small
$$ -a_{01} t^5+(a_{10}-b_{01})t^4+(6 a_{01}+b_{10})t^3+(b_{01}-6 a_{10})t^2-(a_{01}+b_{10})t+a_{10}=0, \;t=\tan\theta.$$
}
Then it follows that

$$ \bar{a_{01}}v(du^4-6du^2dv^2+dv^4)+(\bar{b_{10}}u+\bar{b_{01}}v)dudv(du^2-dv^2)=0.$$
where $\bar{a_{01}}=\bar{a_{01}}(a_{10},a_{01},b_{10}, b_{01} ,\theta)$,  $\bar{b_{10}}=\bar{b_{10}}(a_{10},a_{01},b_{10}, b_{01}, \theta)$ and
 $\bar{b_{01}}=\bar{b_{01}}(a_{10},a_{01},b_{10}, b_{01}, \theta)$.
\end{lema}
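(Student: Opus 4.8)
The plan is to substitute the rotation directly into the quartic and collect terms, using the crucial structural fact that the two "building blocks" of the equation — the form $Q(dx,dy) := dx^4 - 6dx^2dy^2 + dy^4$ and the form $P(dx,dy) := dxdy(dx^2-dy^2)$ — transform nicely under rotations. Concretely, I would first record that $Q$ and $P$ are, up to scalar, the real and imaginary parts of $(dx + i\,dy)^4$: indeed $(dx+i\,dy)^4 = Q(dx,dy) + 2i\,P(dx,dy)$ (quick check of binomial coefficients: real part $1 - 6 + 1$, imaginary part $4 - 4$ after the appropriate sign bookkeeping, which is exactly $4\,P$ up to the factor I will fix). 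Hence under $dx + i\,dy = e^{-i\theta}(du + i\,dv)$ one gets $(dx+i\,dy)^4 = e^{-4i\theta}(du+i\,dv)^4$, so the pair $(Q,P)$ in the $(x,y)$-variables is carried to a fixed linear combination (with coefficients $\cos 4\theta, \sin 4\theta$) of the pair $(Q,P)$ in the $(u,v)$-variables. This reduces the whole problem to bookkeeping on the \emph{linear} coefficient forms $L_0 := a_{10}x + a_{01}y$ and $L_1 := b_{10}x + b_{01}y$, each of which transforms by the same rotation into a new linear form in $u,v$.

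Next I would carry out the substitution. Writing $L_0 = \tilde a_{10}u + \tilde a_{01}v$ and $L_1 = \tilde b_{10}u + \tilde b_{01}v$ for the rotated linear forms (explicit linear functions of $a_{10},a_{01},b_{10},b_{01},\cos\theta,\sin\theta$), and writing $Q_{(x,y)} = \cos 4\theta\, Q_{(u,v)} - \sin 4\theta\, (2P_{(u,v)})$ and similarly for $P_{(x,y)}$ — i.e. the $2\times 2$ rotation-by-$4\theta$ matrix acting on $(Q, 2P)$ — the left-hand side becomes
\begin{equation*}
\bigl(\tilde a_{10}u + \tilde a_{01}v\bigr)\bigl(\cos 4\theta\, Q_{(u,v)} - \sin 4\theta\, \cdot 2P_{(u,v)}\bigr) + \bigl(\tilde b_{10}u + \tilde b_{01}v\bigr)\bigl(\tfrac12\sin 4\theta\, Q_{(u,v)} + \cos 4\theta\, P_{(u,v)}\bigr).
\end{equation*}
Regrouping, this is $\bigl(\hat L_0\bigr) Q_{(u,v)} + \bigl(\hat L_1\bigr) P_{(u,v)}$ for new linear forms $\hat L_0, \hat L_1$ that are linear in the coefficients of $\tilde L_0, \tilde L_1$ with coefficients $\cos 4\theta, \sin 4\theta$. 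The coefficient of $u$ in $\hat L_0$ is then an explicit polynomial expression in $a_{10},a_{01},b_{10},b_{01}$ and $t=\tan\theta$ (after clearing $\cos^5\theta$, the relevant trig identities $\cos 4\theta$, $\sin 4\theta$ expressed via $t$). I claim this coefficient is, up to the nonzero factor $\cos^5\theta$, exactly the quintic
\begin{equation*}
-a_{01} t^5+(a_{10}-b_{01})t^4+(6 a_{01}+b_{10})t^3+(b_{01}-6 a_{10})t^2-(a_{01}+b_{10})t+a_{10},
\end{equation*}
so choosing $\theta$ to be a real root of that quintic forces the $u$-coefficient of $\hat L_0$ to vanish — leaving $\hat L_0 = \bar a_{01} v$ for some $\bar a_{01}$, which is precisely the asserted normal form with $\bar b_{10}u + \bar b_{01}v := \hat L_1$. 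A real root exists because a quintic with real coefficients always has one.

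The main obstacle is purely computational: verifying that the $u$-coefficient of $\hat L_0$ equals the displayed quintic in $t$ up to the factor $\cos^5\theta$. This requires expanding $\cos 4\theta$ and $\sin 4\theta$ in terms of $\cos\theta,\sin\theta$, substituting the rotated coefficients $\tilde a_{10} = a_{10}\cos\theta - a_{01}\sin\theta$, $\tilde a_{01} = a_{10}\sin\theta + a_{01}\cos\theta$ (and similarly for $b$), multiplying through, collecting the coefficient of $u$, dividing by $\cos^5\theta$ and reading off the polynomial in $t$. I would do this carefully once (or verify it with a symbolic computation), since the sign conventions in the half-angle / multiple-angle substitutions are the only place an error can creep in; everything else is forced by the structure. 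I would also remark that the quintic is exactly the condition "the line $L_0 = 0$, rotated, passes through the origin with no $u$-component," which gives an independent sanity check on its form, and note that the explicit formulas for $\bar a_{01},\bar b_{10},\bar b_{01}$ are obtained by reading off the remaining coefficients after the substitution — we do not need their closed form for the applications, only their existence, so I would not grind those out.
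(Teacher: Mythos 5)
Your approach is in substance the same as the paper's --- the published proof consists of the single sentence ``the result follows from straightforward calculations'' --- but you supply an organizing principle the paper omits: since $(dx+i\,dy)^4 = (dx^4-6dx^2dy^2+dy^4) + 4i\,dxdy(dx^2-dy^2)$, the pair $(Q,4P)$ is rotated by the angle $4\theta$ while the linear coefficient forms are rotated by $\theta$, so the coefficient of $u$ in the new $Q$-coefficient is a homogeneous quintic in $(\cos\theta,\sin\theta)$, which after division by $\cos^5\theta$ is exactly the displayed quintic in $t=\tan\theta$. This is a genuinely cleaner way to see why the vanishing condition is a quintic and why the structure $(Q,P)$ is preserved, and it is worth recording. (Your factor should be $4P$, not $2P$, in the imaginary part, but you flag that you will fix the constant, and it only rescales the mixing matrix.)

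There is one genuine gap, and it is precisely the point the paper's one-line proof does bother to address. Your assertion that ``a real root exists because a quintic with real coefficients always has one'' fails when the leading coefficient $-a_{01}$ vanishes: in that case the polynomial has degree at most $4$ and may have no real root, so no finite $t=\tan\theta$ need exist. The paper disposes of this case by observing that when $a_{01}=0$ the rotation $\theta=\pi/2$ (the ``root at infinity'' of the quintic) already puts the equation in the stated normal form, since then $L_0=a_{10}x$ becomes $a_{10}v$ under $x=v,\ y=-u$. You should add this degenerate case to complete the existence argument; with it, your proof is complete.
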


\begin{proof} The result follows from straightforward calculations. Observe that when $a_{01}=0$   a rotation of angle $\pi/2$ is sufficient to obtain the result stated.
\end{proof}

\begin{prop}
\label{prop.axial}
Let $\p$ be an axiumbilic point. Then there exists a Monge chart and a homotety in   $\R^4$ such that the differential equation of axial lines is given by
\begin{equation}
\label{axial.monge}
y(dy^4-6dx^2dy^2+dx^4)+(a x+b y)dxdy(dx^2-dy^2) + H(x,y,dx,dy)=0
\end{equation}
where  $H$ contains terms of order greater than  or equal to  $2$ in  $(x,y)$. Moreover, the axiumbilic point  $\p$ is transversal if and only if  $a \neq 0$.
\end{prop}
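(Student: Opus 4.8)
The plan is to start from the normal form obtained in Lemma~\ref{lema.eq.axial.monge}, where the axiumbilic point at $(0,0)$ has axial differential equation
$$\tilde a_0(x,y)(dx^4-6dx^2dy^2+dy^4)+\tilde a_1(x,y)(dx^2-dy^2)dx\,dy+H=0,$$
with $\tilde a_0,\tilde a_1$ having zero constant term and linear parts
$\tilde a_0 = \tfrac12(r\alpha_1+s\alpha_2)x+\tfrac12(r\alpha_3+s\alpha_4)y+O(2)$ and
$\tilde a_1 = 2(s\alpha_1-r\alpha_2)x+2(s\alpha_3-r\alpha_4)y+O(2)$. First I would discard the quadratic and higher terms of $\tilde a_0,\tilde a_1$ into $H$, so that the leading part is of the precise shape $(a_{10}x+a_{01}y)(dx^4-6dx^2dy^2+dy^4)+(b_{10}x+b_{01}y)dx\,dy(dx^2-dy^2)$ treated in Lemma~\ref{lema:rota}. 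The first key step is to invoke Lemma~\ref{lema:rota}: choosing $\theta$ a real root of the quintic in $\tan\theta$ displayed there (such a root always exists since a real quintic has a real root), the rotation $x=\cos\theta\,u+\sin\theta\,v$, $y=-\sin\theta\,u+\cos\theta\,v$ kills the $x$--coefficient of $\tilde a_0$, bringing the equation to $\bar a_{01}v(du^4-6du^2dv^2+dv^4)+(\bar b_{10}u+\bar b_{01}v)du\,dv(du^2-dv^2)=0$ plus higher-order terms. By the final sentence in the excerpt, this rotation is a positive rotation in the tangent plane and may be accompanied by a matching positive rotation in the normal plane, under which the transversality quantity $T=\alpha_2\alpha_3-\alpha_1\alpha_4$ is invariant; so nothing about genericity is lost.

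The second step is to normalize the coefficient $\bar a_{01}$ to $1$. Here one must first argue $\bar a_{01}\neq 0$: if $\bar a_{01}$ vanished then $\tilde a_0$ would have zero linear part in the new coordinates, forcing (because rotations are linear isomorphisms) the original linear part of $\tilde a_0$ to vanish, i.e. $r\alpha_1+s\alpha_2=r\alpha_3+s\alpha_4=0$; combined with $r^2+s^2\neq 0$ this makes the vector $(\alpha_1-\alpha_4,\alpha_2+\alpha_3)$ — or an equivalent $2\times2$ determinant — degenerate in a way incompatible with $T\neq0$, and in the non-transversal case it is automatically excluded because then $\tilde a_0\equiv0$ to first order forces the whole net to degenerate; in any event one may, after a further rotation by $\pi/2$ if necessary (the parenthetical remark in Lemma~\ref{lema:rota}), assume $\bar a_{01}\neq0$. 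Then apply a homothety of $\R^4$, say $(z,w)\mapsto(\lambda z,\lambda w)$ with $\lambda$ chosen so that the rescaled linear coefficient of $\tilde a_0$ becomes exactly $1$; homotheties scale $R,S$ and hence all $r_{ij},s_{ij}$ by $\lambda$, which multiplies $\tilde a_0,\tilde a_1$ by a positive power of $\lambda$ and rescales $(x,y)$, so after absorbing constants the equation reads $y(dx^4-6dx^2dy^2+dy^4)+(ax+by)dx\,dy(dx^2-dy^2)+H=0$ with $H$ of order $\geq2$ in $(x,y)$, renaming $(u,v)$ back to $(x,y)$ and $dy^4-6dx^2dy^2+dx^4 = dx^4-6dx^2dy^2+dy^4$ being the same quartic. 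This establishes the stated normal form \eqref{axial.monge}.

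The third and final step is the transversality characterization. In the normal form the linear parts are $\tilde a_0 = y$ and $\tilde a_1 = ax+by$, so the Jacobian determinant of $(\tilde a_0,\tilde a_1)$ at $(0,0)$ is
$\left|\begin{smallmatrix}0 & 1\\ a & b\end{smallmatrix}\right| = -a$.
Since the transversality condition \eqref{determinante.transversal} is the nonvanishing of exactly this determinant, and since the reductions used (a rotation in the tangent plane, a matching rotation in the normal plane, and a homothety of $\R^4$) are all of the type under which transversality is preserved — indeed $T\neq0$ is invariant by the rotations per the sentence before Lemma~\ref{lema:rota}, and a homothety multiplies both $\tilde a_0$ and $\tilde a_1$ by the same nonzero factor up to the coordinate rescaling, hence multiplies the Jacobian determinant by a nonzero constant — we conclude $\p$ is transversal if and only if $a\neq0$.

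The main obstacle I anticipate is the bookkeeping in the second step: verifying that one can genuinely force $\bar a_{01}\neq0$ (handling the case $a_{01}=0$ via the extra $\pi/2$ rotation and confirming that in the non-transversal situation this causes no contradiction with the stated "$H$ of order $\geq2$"), and tracking the weights of $\lambda$ carefully through the homothety so that the coefficient of the quartic term becomes exactly $1$ while the coefficient function of the cubic-cross term stays a linear polynomial $ax+by$ with no constant term. The rotation computation itself (producing $\bar a_{01},\bar b_{10},\bar b_{01}$ explicitly) is routine and already granted by Lemma~\ref{lema:rota}, so I would not reproduce it; the delicate point is purely the genericity/non-degeneracy argument guaranteeing the leading coefficient is nonzero before rescaling.
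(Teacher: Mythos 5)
Your overall route coincides with the paper's: pass to the linear normal form of Lemma \ref{lema.eq.axial.monge}, kill the $x$-coefficient of $a_0$ with the rotation of Lemma \ref{lema:rota}, normalize $a_{01}=1$ by a homothety, and read transversality off the Jacobian determinant, which becomes $-a$. The one step that does not hold as written is your justification that $\bar a_{01}\neq 0$: you claim that $\bar a_{10}=\bar a_{01}=0$ would force the \emph{original} linear part of $\tilde a_0$ to vanish ``because rotations are linear isomorphisms.'' A rotation of the $(x,y)$-plane does not act on $\tilde a_0$ alone: the two quartic forms $dx^4-6dx^2dy^2+dy^4$ and $dx\,dy(dx^2-dy^2)$ mix under rotation, which is precisely why Lemma \ref{lema:rota} records $\bar a_{01}$ as a function of \emph{all four} coefficients $a_{10},a_{01},b_{10},b_{01}$. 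Hence the vanishing of the rotated $a_0$-linear form does not pull back to the vanishing of the unrotated one, and your subsequent contradiction with $T\neq 0$ rests on a false premise. The correct (and shorter) argument is the one you only deploy later for the transversality equivalence: the determinant $a_{10}b_{01}-a_{01}b_{10}$, which equals $[\alpha_2\alpha_3-\alpha_1\alpha_4](r^2+s^2)$ up to sign, is invariant under the rotation; once $\bar a_{10}=0$ it reduces to $-\bar a_{01}\bar b_{10}$, so in the transversal case $\bar a_{01}\neq 0$ is automatic. In the non-transversal case this gives nothing, and your fallback (``$\tilde a_0\equiv 0$ to first order forces the whole net to degenerate'') is an assertion, not a proof; note that the paper's own proof only claims the normalization $a_{01}=1$ under the hypothesis $(\alpha_1\alpha_4-\alpha_2\alpha_3)(r^2+s^2)\neq 0$, so you should either restrict your claim in the same way or give a genuine argument that $\bar a_{10}=0$ can be achieved with $\bar a_{01}\neq 0$ whenever the pair of linear forms is not identically zero. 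The remaining steps --- existence of a real root of the quintic, the homothety bookkeeping, and ``transversal $\Leftrightarrow a\neq 0$'' --- are correct and match the paper.
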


\begin{proof} Consider a parametrization $X(x,y)=(x,y, R(x,y), S(x,y))$  given by equations \eqref{Rx} and \eqref{S} such that $0$ is an axiumbilic point. By  equation \eqref{eq:a0a1}   it follows that:
$$\aligned a_0(x,y)=&    \frac{1}2 (r\alpha_1+ s\alpha_2)x+\frac 12(r\alpha_3+ s\alpha_4)y  +O(2),\\
a_1(x,y) =&  2(s\alpha_1-r\alpha_2)x+2(s\alpha_3-r\alpha_4)y +O(2).
\endaligned
$$

By an appropriate choice of the rotation in the plane $\{x,y\}$ given by Lemma \ref{lema:rota} and a homotety in $\mathbb R^4$, it is possible to make $2a_{10}=r\alpha_1+ s\alpha_2=0$ and, when
$(\alpha_1\alpha_4-\alpha_2\alpha_3)(r^2+s^2)\ne 0$, also $a_{01}=\frac 12(r\alpha_3+ s\alpha_4)=1$.
So the result is established,   $a=\frac{4(s\alpha_1-r\alpha_2)}{r\alpha_3+ s\alpha_4}$ when
$r\alpha_1+ s\alpha_2=0 $ and $b=\frac{4(s\alpha_3-r\alpha_4)}{r\alpha_3+ s\alpha_4 }$.
If $r\ne 0$ it follows that $a=-\frac{4(r^2+s^2)\alpha_2}{r(r\alpha_3+s \alpha_4)}$
and $a=\frac{4\alpha_1}{\alpha_4}$  when $s\ne 0$ and $r=0$.
\end{proof}

\begin{obs}
\label{axial.p.obs}
Let $p=\frac{dy}{dx}$. Then the differential equation    (\ref{axial.monge})    is given by:

\begin{equation}
\label{axial.p}
y(p^4-6p^2+1)+(a x+b y)p(1-p^2) + H(x,y,p) = 0,
\end{equation}
where $H$ contains terms of order greater than or  or equal to $2$ in $(x,y)$.
\end{obs}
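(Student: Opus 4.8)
The plan is straightforward: equation \eqref{axial.monge} is a quartic differential equation, that is, a form homogeneous of degree $4$ in the differentials $(dx,dy)$, and the statement merely records its expression in the affine Lie--Cartan chart. So I would work on the chart of the projective line $\pi^{-1}(x,y)$ on which $dx\neq 0$, introduce the coordinate $p=dy/dx$ exactly as in \eqref{quartica.axiais.p}, and divide the left-hand side of \eqref{axial.monge} by $dx^4$.

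First I would handle the two explicit blocks. Dividing $y\,(dy^4-6dx^2dy^2+dx^4)$ by $dx^4$ produces $y\,(p^4-6p^2+1)$, and dividing $(ax+by)\,dx\,dy\,(dx^2-dy^2)$ by $dx^4$ produces $(ax+by)\,p\,(1-p^2)$; these are precisely the first two terms of \eqref{axial.p}. Then I would deal with the remainder: by Proposition \ref{prop.axial} the term $H(x,y,dx,dy)$ in \eqref{axial.monge} is a quartic form $\sum_{i+j=4}c_{ij}(x,y)\,dx^{\,i}dy^{\,j}$ whose coefficients $c_{ij}$ vanish to order at least $2$ at $(x,y)=(0,0)$; dividing by $dx^4$ and setting $p=dy/dx$ replaces it by $H(x,y,p)=\sum_{i+j=4}c_{ij}(x,y)\,p^{\,j}$, a polynomial of degree at most $4$ in $p$ with the same coefficients, which therefore still has order at least $2$ in $(x,y)$. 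Summing the three contributions gives \eqref{axial.p}.

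There is no genuine obstacle here; the one bookkeeping point to watch is the order statement for $H$, and the one feature worth a comment is that the chart $p=dy/dx$ misses the vertical direction $dx=0$, i.e.\ the point $p=\infty$ of the fibre. This is the usual feature of the Lie--Cartan suspension already set up in Section \ref{sec:1}, and the behaviour there is recovered through the complementary chart $q=dx/dy$, obtained by dividing \eqref{axial.monge} by $dy^4$ instead; the same computation and the same order bookkeeping apply, with the constant term and the coefficient of $q^4$ interchanged. The equation \eqref{axial.p} is exactly the local model to which the Lie--Cartan vector field \eqref{campo.liecartan} will be applied, so it is the form needed for the analysis of the configurations $E_3$, $E_4$, $E_5$ and of the transitions $E^1_{34}$ and $E^1_{45}$ in the sections that follow.
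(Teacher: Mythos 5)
Your computation is correct and is exactly what the paper intends: the Remark is stated without proof precisely because it amounts to dividing \eqref{axial.monge} by $dx^4$ and substituting $p=dy/dx$, with the order of $H$ in $(x,y)$ unaffected. Your additional bookkeeping on the remainder term and the note about the complementary chart $q=dx/dy$ are consistent with the paper's setup in Section \ref{sec:1}.
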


\section{ Axial configuration in the neighborhood of axiumbilic points}
\label{sec:3}

Let $\p$ be an axiumbilic point whose neighborhood is parametrized by a Monge chart following the notation established in Section  \ref{sec:2}. When it is a transversal axiumbilic point, which is determined by transversal intersection of the curves  $a_0=0$ and $a_1=0$  (see equation (\ref{determinante.transversal})),  
it results from Proposition   \ref{prop.axial} and Remark  \ref{axial.p.obs} that
the differential equation of axial lines is given by

\begin{equation}
\label{sup.cartan}
{\Gg}(x,y,p)= y(p^4-6p^2+1)+(ax+by)p(1-p^2)+H(x,y,p) = 0,
\end{equation}
where
$H(x,y,p)$ contains higher order terms greater or equal to    $2$ in $(x,y)$.

The  Lie-Cartan surface $\LL_{\alpha}$ in $PM$ is defined  implicitly by

\begin{equation}
\label{implicit}
{\Gg}(x,y,p)=0.
\end{equation}
In the case that $\p$ is a transversal axiumbilic point the surface defined above
is regular and of class  ${\Cc}^{r-2}$
in the neighborhood of the projective axis
 $p$.

In the coordinates
 $(x,y,p)$, the Lie-Cartan vector field $X$, is of class ${\Cc}^{r-3}$, (equation (\ref{campo.liecartan})):
\begin{equation}
\label{liecartan}
X:={\Gg}_p \frac{\partial}{\partial x}+ p{\Gg}_p \frac{\partial}{\partial y}-({\Gg}_x+p{\Gg}_y) \frac{\partial}{\partial p}
\end{equation}
and the projections of the integral curves of  $X \bigg |_{{\Gg}=0}$ are the axial lines in a neighborhood of  $\p$ (Figure \ref{superficie.lie.cartan}).

Restricted to the projective axis $p$ the Lie-Cartan vector field is given by

\begin{equation*}
X = -p[(p^4-6p^2+1)+(1-p^2)(a+bp)]  \frac{\partial}{\partial p}.
\end{equation*}

Therefore, the singular points of the Lie-Cartan vector field in the projective line are
given by the equation:
\begin{equation}
\label{P}
P(p)=pR(p)=p[(p^4-6p^2+1)+(1-p^2)(a+bp)]=0.
\end{equation}

The discriminant of $R(p)=(p^4-6p^2+1)+(1-p^2)(a+bp)$ is

\begin{equation}\label{eq:delta}\aligned
    \Delta(a,b)=&
16a^5+4( b^2+68)a^4+16( b^2+144)a^3\\
-&8(b^2-80)(16+b^2)a^2+96(16+b^2)^2 a+ 4(16+b^2)^3 .
\endaligned
\end{equation}

Furthermore, $R(\pm 1)=-4$, $R(0)=1+a$ and $\lim_{p\fun \pm \infty} R(p)=+\infty$, thus $R$ has at least two simple real roots,
one
is
less than $-1$ and the other
is
greater
 than $1$.

The derivative of $X$ at  $(0,0,p)$ is given by:
$$
DX(0,0,p) =
\left [
\begin{array}{ccc}
a(1-3p^2) & 4p^3+b(1-3p^2)-12p & 0 \\
a(1-3p^2)p & p[4p^3+b(1-3p^2)-12p] & 0 \\
0 & 0 & -P'(p)
\end{array}
\right ]
$$
whose eigenvalues are $0$ and
 
$$
\begin{array}{l}
\lambda_1(p)=a(1-3p^2)+p[4p^3+b(1-3p^2)-12p], \\
\lambda_2(p)= -P'(p). \\
\end{array}
$$

Recall that $P(p)=pR(p)$, and so $P'(p)=R(p)+pR'(p)$. Therefore at the roots of     $R$, it follows that $-P'(p)=-pR'(p)$. Also, as $\pm 1$ are not roots of   $R $, it follows that

$$a=\frac{(-p^4+6p^2-1)+bp(1-p^2)}{1-p^2}.$$

Substituting the equation above into  the expression of   $\lambda_1(p)$,  $p$ being a    root  of   $R(p)$ (singular points of  $X$), it follows that
$$
\left \{
\begin{array}{l}
\lambda_1(p)=\frac{(p^2+1)^3}{(p^2-1)}, \\
\lambda_2(p)= -pR'(p). \\
\end{array}
\right .
$$
Therefore,  the   eigenvalues of $DX$,
 on the tangent space to $\Gg =0$,
 are as follows:
\begin{equation}
\label{p0}
p_0=0 :\ \
\left \{
\begin{array}{l}
\lambda_1= a, \\
\lambda_2=-(a+1),
\end{array} \ \ \ \
\right .
\end{equation}
\begin{equation}
\label{pi}
p_i\neq 0 :\ \
\left \{
\begin{array}{l}
\lambda_1=\frac{(p_i^2+1)^3}{(p_i^2-1)}, \\
\lambda_2= -p_i R'(p_i). \\
\end{array} \ \ \ \
\right .
\end{equation}

The eigenspace  associated to the eigenvalue $\lambda_1$ is  transversal to the axis $p$ and the eigenvalue  $\lambda_2$ has the projective axis as the associated eigenspace.

In  \cite{sotogarcia1} the axial configuration near an axiumbilic point was established in the following situation:
\begin{itemize}
\item $\Delta(a,b) < 0$,
\item $\Delta (a,b)> 0$,\; $a<0, \; a \neq -1$,
\item $\Delta(a,b) > 0$,\; $a>0$.
\end{itemize}

When $\Delta(a,b) < 0$, $R$ has two simple real roots, and the Lie-Cartan
vector field
has three hyperbolic saddles in the projective axis. This axiumbilic point is called of type $E_3$.

When $\Delta(a,b) > 0$, $a<0, \ a \neq -1$,   $R$ has four simple real roots, and the Lie-Cartan vector field has 5 singular points in the projective line. Four are hyperbolic saddles and one is a hyperbolic node.   This axiumbilic point is called of type $E_4$.

When  $\Delta (a,b)> 0$, $a>0$, the  Lie-Cartan vector field has 5 hyperbolic saddles in the projective line. This axiumbilic point is called of type $E_5$.

In Figure  \ref{superficies} the Lie-Cartan surfaces and the integral curves of the Lie-Cartan vector field are sketched in the three cases  $E_3$, $E_4$ and $E_5$.
The projections of the integral curves by   $\pi: PM \fun M$ are the axial lines near the axiumbilic points (see Figure  \ref{configuracoes})
 $E_3$, $E_4$ and $E_5$.

\begin{figure}[h]
       \centering  
       \fbox{
       \includegraphics[scale=0.55]{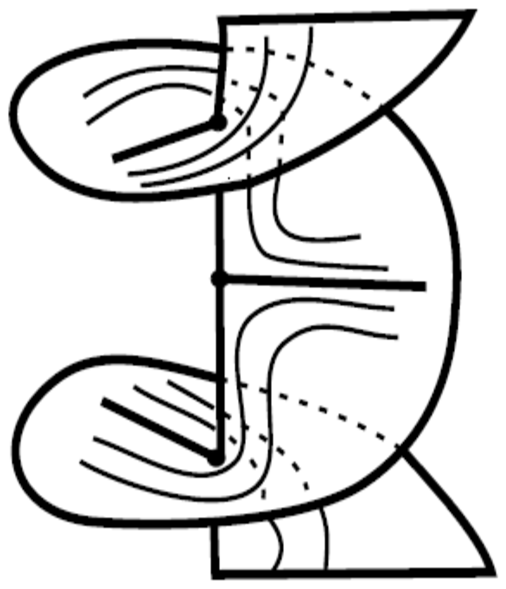}\;\;\;\;
       \includegraphics[scale=0.53]{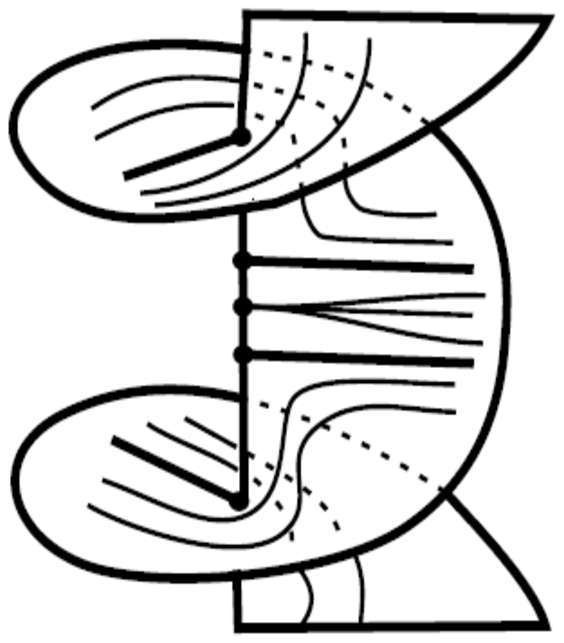}\;\;\;\;
       \includegraphics[scale=0.32]{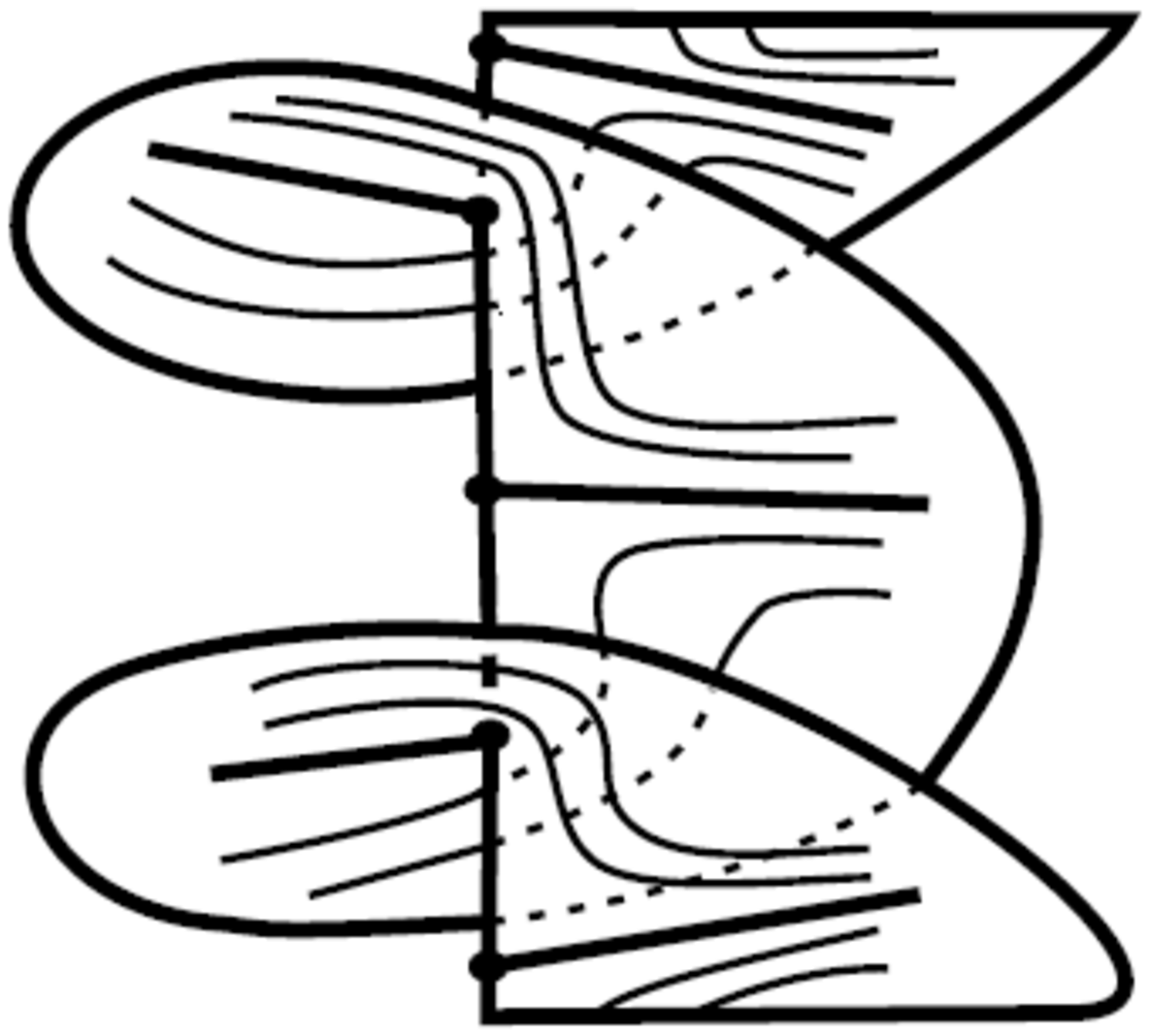} }
\caption{ Lie-Cartan vector field and its integral curves in the cases $E_3$, $E_4$ and $E_5$.}
\label{superficies}
\end{figure}

\begin{figure}[h]
       \centering  %
       \fbox{
       \includegraphics[scale=0.35]{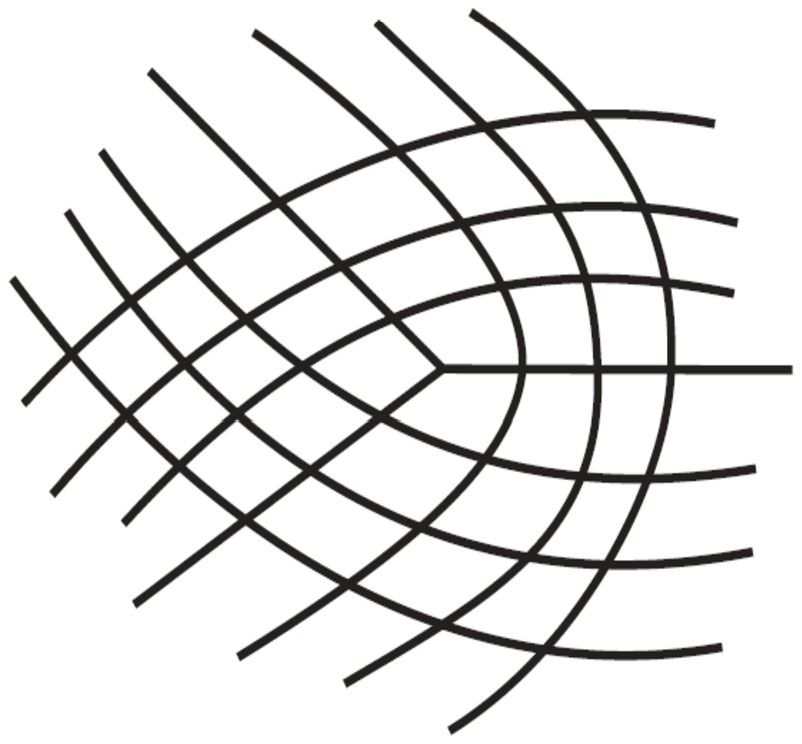}
       \includegraphics[scale=0.32]{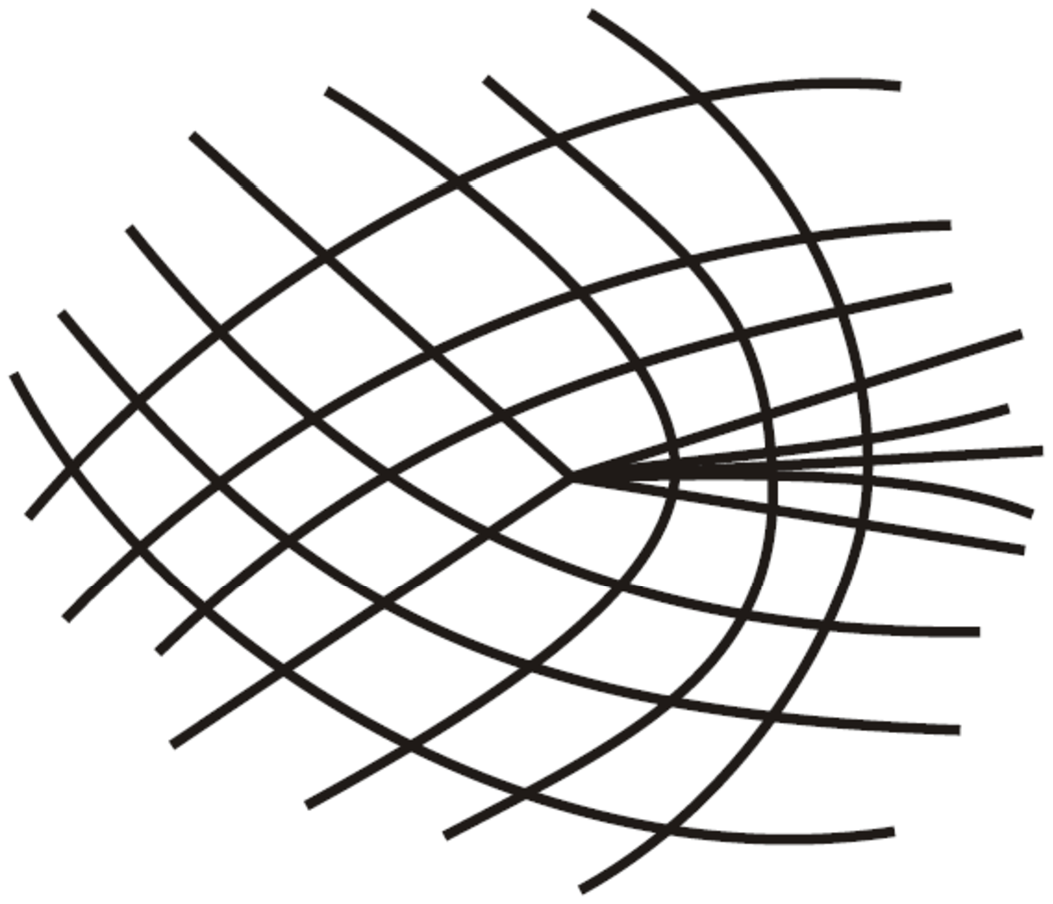}\;\;\;\;
       \includegraphics[scale=0.32]{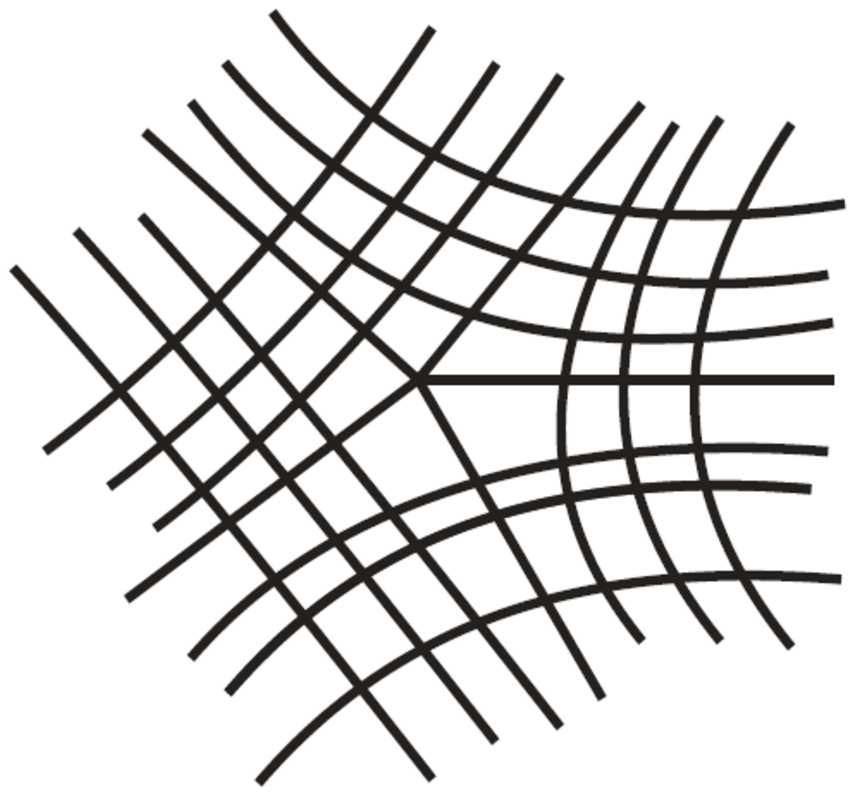}}
        \fbox{
       \includegraphics[scale=0.30]{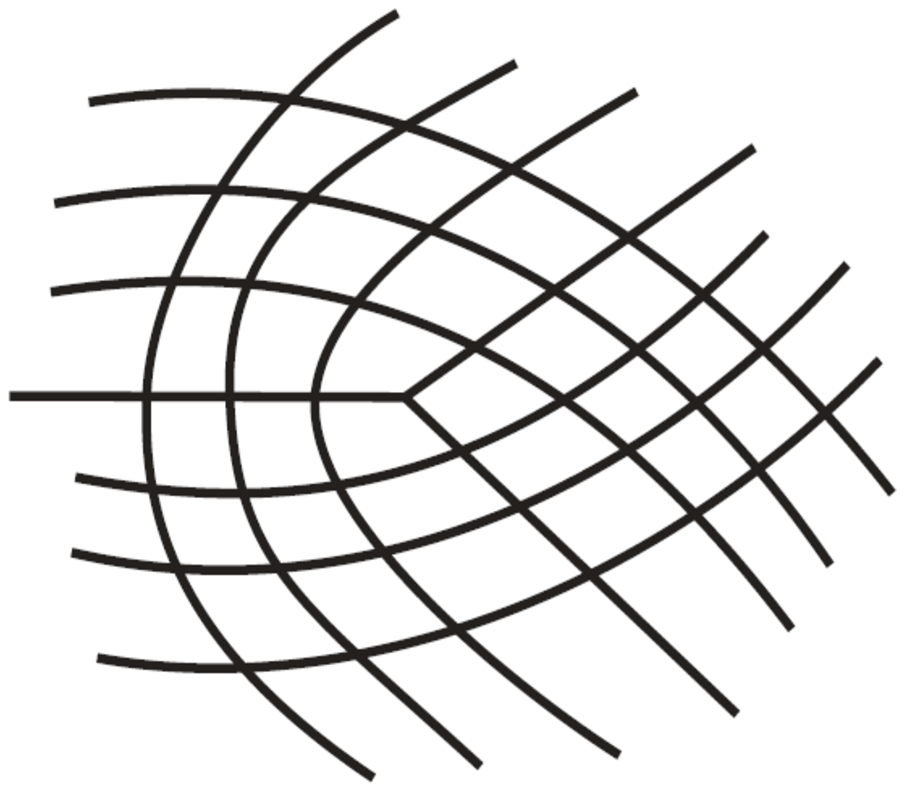}\;\;\;\;
       \includegraphics[scale=0.30]{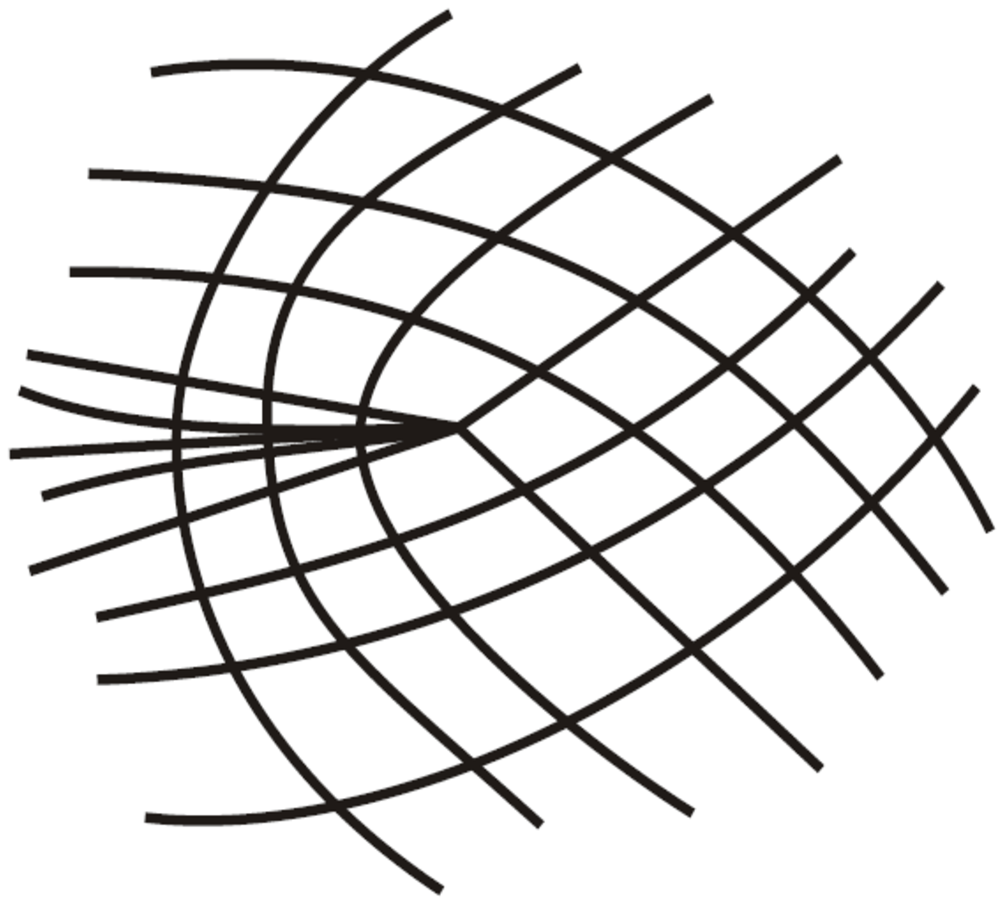}\;\;\;\;\;\;
       \includegraphics[scale=0.30]{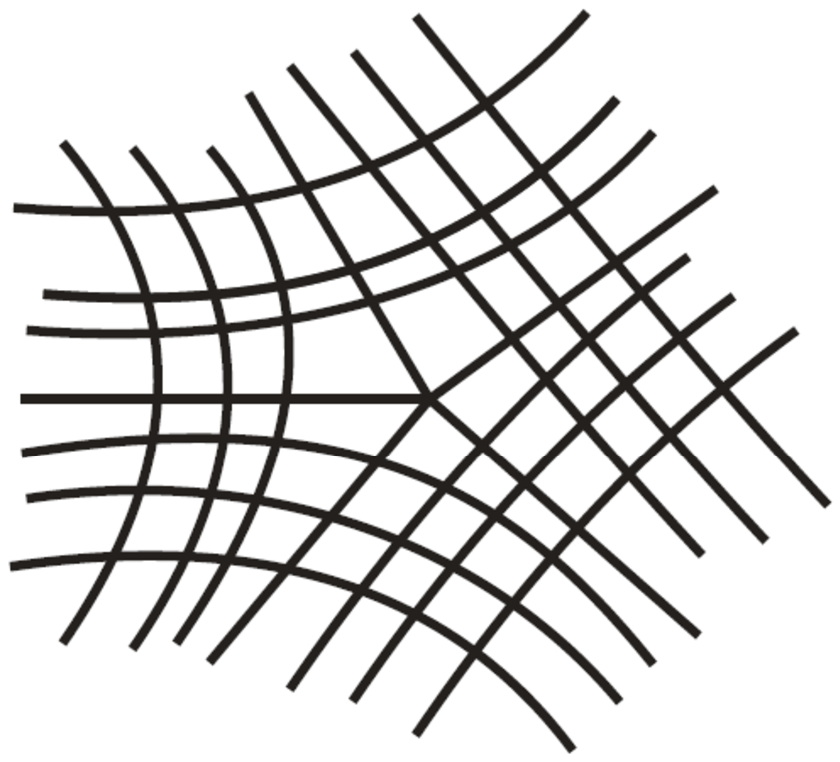}}
\caption{Axial Configurations near Axiumbilic Points $E_3$ (left), $E_4$ (center) and $E_5$ (right).}
\label{configuracoes}
\end{figure}

For an immersion $\alpha$  of a surface  $M$ into
   $\mathbb R^4$, the  axiumbilic singularities ${\Uu}_{\alpha}$
    and the lines of  axial  curvature are assembled into
     two {\it axial configurations}:
the  {\it  principal axial configuration}
${\Pp}_{\alpha}= \{ {\Uu}_{\alpha},\; {\Xx}_{\alpha} \}$ and the  {\it mean  axial configuration}  ${\Qq}_{\alpha}= \{ {\Uu}_{\alpha},\; {\Yy}_{\alpha}\}.$
 
An immersion $\alpha\in {\Ii}^r$ is said to be {\it Principal Axial Stable} if it has a $C^r$ neighborhood ${\mathcal V}(\alpha)$ such that, for any $\beta\in   {\mathcal V}(\alpha)$ there exists a homeomorphism $h: M \to M $ mapping ${\Uu}_\alpha$ onto  ${\Uu}_\beta$ and mapping the integral net of ${\Xx}_\alpha$ onto that of ${\Xx}_\beta$.
Analogous definition is given for {\it Mean Axial Stability}.

In Proposition
  \ref{axi.estaveis}  are described the axiumbilic points  which are axial stable. In
 Figure  \ref{diagrama 3}
 are sketched
 the curves   $\Delta(a,b)=0$, $a=-1$ and $a=0$
 in the plane $a ,
 b$,
 which bound the open regions
 corresponding to  the
 three  types of axiumbilic  points of axial stable type.

\begin{prop}[\cite{sotogarcia1}, \cite{sotogarcia} p. 209]
\label{axi.estaveis}
Let $\p$ be an axiumbilic point of  $\alpha \in {\mathcal{I}}^r, \ r\geq 5$. Then, $\alpha$ is locally  principal
axial stable and locally mean axial stable at  $\p$ if and only if $\p$ is of type $E_3,\ E_4$ or $E_5$.
 The curve $\Delta(a,b)=0$   has three connected components, is contained in the region $a\leq -1$ and it is regular outside the points $(-\frac{27}2,\pm\frac{5\sqrt{5}}2)$ which are of cuspidal type.

\end{prop}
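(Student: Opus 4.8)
The plan is to prove the two assertions of the proposition separately. The stability statement is essentially a structural-stability theorem for line fields on the Lie-Cartan surface: I would show that if $\p$ is of type $E_3$, $E_4$ or $E_5$, then the Lie-Cartan vector field $X$ near $\pi^{-1}(\p)$ has only hyperbolic singularities (three or five of them, all on the projective axis, as recorded just before the statement via the eigenvalue formulas (\ref{p0})–(\ref{pi})), and hence is locally structurally stable in the $C^1$ sense on the surface $\Gg=0$; projecting down by $\pi$ and using that $\pi$ restricted to $\LL_\alpha$ is a regular $4$-fold covering off the axis, one transports the conjugating homeomorphism on the surface to a homeomorphism of $M$ matching axiumbilics and the axial nets. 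Conversely, if $\p$ is \emph{not} of one of these three types, then either $\Delta(a,b)=0$ (so $R$ has a multiple root, producing a non-hyperbolic singularity of $X$ whose unfolding changes the number of separatrices) or $a\in\{0,-1\}$ (so by (\ref{p0}) one eigenvalue at $p_0=0$ vanishes, again a non-hyperbolic saddle-node-type degeneracy); in each case one perturbs $\alpha$ inside $\Ii^r$ to split or annihilate singularities, changing the topological type of the net, so $\p$ is not axial stable. This "only if" direction is where one must be slightly careful that the degeneracy is genuinely unremovable by the perturbation, i.e. that the relevant jets of $a_0,a_1$ (equivalently of $(a,b)$) can be moved independently by varying $R,S$ in (\ref{Rx})–(\ref{S}); this is a transversality/jet-surjectivity computation in the spirit of Section \ref{sec:2} and will be the main obstacle.

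For the second assertion — the geometry of the curve $\Delta(a,b)=0$ in the $(a,b)$-plane — I would argue directly from the explicit quintic $\Delta(a,b)$ in (\ref{eq:delta}). First, to see that $\Delta=0$ lies in $\{a\le -1\}$: one checks that for $a\ge -1$ the polynomial $R(p)=(p^4-6p^2+1)+(1-p^2)(a+bp)$ has no repeated real root, for instance by showing $\mathrm{Res}_p(R,R')=\Delta(a,b)$ does not vanish there; concretely, since $R(0)=1+a\ge 0$, $R(\pm1)=-4<0$ and $R\to+\infty$, a sign/interlacing argument on $R$ and $R'$ forces at most two real roots, both simple, when $a\ge -1$, so $\Delta\ne 0$ on that half-plane (and $\Delta=0$ on the boundary only in a way to be pinned down). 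The three connected components: treat $\Delta$ as a quintic in $a$ with coefficients polynomial in $b^2$, $\Delta(a,b)=16a^5+4(b^2+68)a^4+\cdots+4(16+b^2)^3$; using Descartes/Sturm one determines that for $b^2$ in suitable ranges it has either one or three negative real roots in $a$, and the "one root" region versus the "three roots" region are separated exactly at the two discriminant-of-the-discriminant points — which one computes to be $(a,b)=(-\tfrac{27}{2},\pm\tfrac{5\sqrt5}{2})$.

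To locate and classify those two special points I would compute the singular locus of the plane curve $\Delta(a,b)=0$: solve $\Delta=\Delta_a=\Delta_b=0$ simultaneously. Since $\Delta$ is even in $b$, $\Delta_b$ factors through $b$, and $\Delta_b=0$ gives either $b=0$ or a polynomial relation in $(a,b^2)$; substituting back into $\Delta=\Delta_a=0$ reduces to a one-variable elimination whose only real solutions are $a=-\tfrac{27}{2}$ with $b=\pm\tfrac{5\sqrt5}{2}$. Regularity elsewhere follows because at any other point of $\Delta=0$ the gradient $(\Delta_a,\Delta_b)$ is nonzero. Finally, that these two points are cusps (not nodes or higher singularities): expand $\Delta$ in a Taylor series centered at $(-\tfrac{27}{2},\tfrac{5\sqrt5}{2})$ (and symmetrically at the other), check that the quadratic part is a perfect square (rank-one Hessian) and that the cubic part is not divisible by the square root of the quadratic part — the standard analytic criterion for an ordinary cusp $A_2$. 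I expect the heaviest computational burden here, and in the component-counting step, to be these resultant and Taylor-coefficient computations; they are routine but lengthy, so I would carry them out with a symbolic algebra system and record only the key intermediate factorizations. The conceptual main obstacle remains the converse half of the stability statement, namely verifying that the degeneracies on $\Delta=0$, $a=0$, $a=-1$ are realized by actual perturbations of immersions and produce a topological change in the axial net.
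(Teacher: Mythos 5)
Your route matches the paper's for the plane--curve analysis and, for the stability statement, fills in an argument the paper simply delegates to \cite{sotogarcia1} and \cite{sotogarcia}: the paper's own proof never re-derives hyperbolicity or constructs the conjugating homeomorphism, it only invokes the classification of $E_3$, $E_4$, $E_5$ by the signs of $\Delta$ and of $a$, so your sketch (hyperbolic singularities of the Lie--Cartan field, structural stability on $\Gg=0$, projection by the four-fold covering, plus a perturbation argument for the converse) is a plausible reconstruction of the cited proof rather than a divergence. For the curve $\Delta(a,b)=0$ the paper organizes the elimination slightly differently: it computes the resultant of $\Delta$ and $\partial\Delta/\partial b$ with respect to $b$, obtaining $274877906944\,(1+a)(a^2+8a+32)^2a^{16}(2a+27)^6$, which at once confines all candidate singular or vertically tangent points to $a=-1$ and $a=-\tfrac{27}{2}$; it then checks that the critical points $(-\tfrac{27}{2},\pm\tfrac{5\sqrt5}{2})$ lie on the curve, that the quadratic part of the Taylor expansion there is a rank-one (perfect-square) form --- exactly your $A_2$ test --- and that near $(-1,0)$ the curve is $a=-\tfrac1{20}b^2+O(3)$. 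Your singular-locus system $\Delta=\Delta_a=\Delta_b=0$ and your Sturm/Descartes count in $a$ are equivalent elimination computations, so both approaches buy essentially the same thing. One concrete slip to fix: for $a>-1$ your sign data $R(0)=1+a>0$, $R(\pm1)=-4$, $R(p)\to+\infty$ as $p\to\pm\infty$ give \emph{four} sign changes, hence four distinct simple real roots of the quartic $R$, not ``at most two''; the desired conclusion $\Delta\neq0$ (indeed $\Delta>0$) on $\{a>-1\}$ still follows, but only from the corrected count --- ``at most two real roots, both simple'' would not by itself exclude a repeated complex pair and so would not force $\Delta\neq0$.
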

\begin{figure}[h]
       \centering  
       \fbox{
       \includegraphics[scale=0.2]{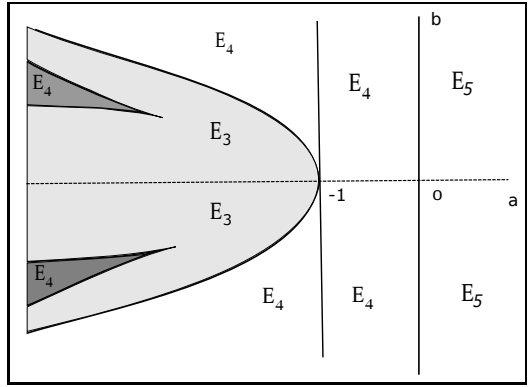}}
\caption{Diagram of stable axiumbilic points, $E_3$, $E_4$ and $E_5$.}
\label{diagrama 3}
\end{figure}
\begin{proof} The function $\Delta(a,b)$ defined by equation \eqref{eq:delta} is symmetric in $b$. The polynomials
$\Delta(a,b)$ and $\frac{\partial \Delta}{\partial b}$ in the variable $b$ has resultant equal to $274877906944 (1+a)  (a^2+8a+32)^2 a^{16}(2a+27)^6$.

The critical points    $p_{\pm}=(-\frac{27}2,\pm \frac{5\sqrt{5}}2)$ of $\Delta$ are contained in  $\Delta(a,b)=0$.

 Near the point $p_+$  it follows that:
{\small
$$\aligned  \Delta(a,b)=& -54675\left[ \left(a+\frac{27}2 \right)^2+5 \left( b-\frac{5\sqrt{5}}2 \right)^2+2\sqrt{5} \left(a+\frac{27}2 \right) \left(b-\frac{5\sqrt{5}}2 \right)\right]\\
+&h.o.t.\endaligned $$
}
Further analysis shows $p_\pm$ are Whitney cuspidal points.

Also the curve  $\Delta(a,b)=0$ is contained in the region $a\leq -1$ and near $(-1,0)$ it is given by $a=-\frac 1{20}b^2+O(3)$. In fact,  for $a>-1$ all the roots of $\Delta(a,b)$ are complex.

By the classification of axiumbilic points $E_3$, $E_4$ and $E_5$ by the sign of $\Delta(a,b)$ and of $a$, the diagram of stable axiumbilic points, see   \cite{sotogarcia1}, \cite{sotogarcia} p. 209, is as shown in Fig. \ref{diagrama 3}.
\end{proof}

\subsection{The axiumbilic point $E^1_{34}$}

\begin{defn}
\label{defn.e34}
Let $\alpha: M  \fun \R^4$ be an immersion of class ${\Cc}^r,  r\geq 5,$ of a smooth and oriented surface.
An axiumbilic point   $\p$ is said to be of type  $E^1_{34}$
if $a$ defined in Proposition \ref{prop.axial} does not vanish
and:
\begin{enumerate}[$i)$]
\item $\Delta(a,b)=0$, $(a,b)\ne (-1,0)$  and $(a,b)\ne (-\frac{27}{2},\pm \frac 52\sqrt{5})$, or
 \item $b\neq 0$ if $a=-1.$
\end{enumerate}
\end{defn}

\begin{prop}
\label{proposicao.e3434}
Let $\alpha: M  \fun \R^4$ be an immersion of class ${\Cc}^r,\ r\geq 5$ of a smooth and oriented surface having an axiumbilic point   $\p$  of type  $E^1_{34}$. Then the axial configuration of   $\alpha$ in a neighborhood of  $\p$ is as shown in Figure  \ref{e34}.
\begin{figure}[ht]
       \centering  
       \fbox{
       \includegraphics[scale=0.7]{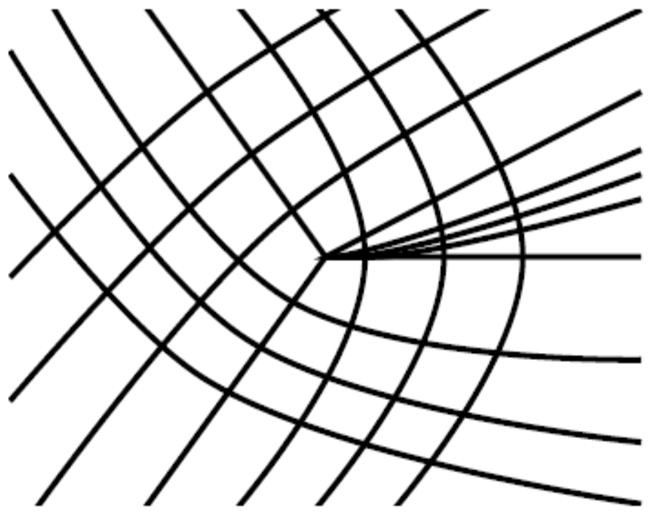}\;\;\;\;\;\;\;\;\;\;\;\;\;\;\;
       \includegraphics[scale=0.7]{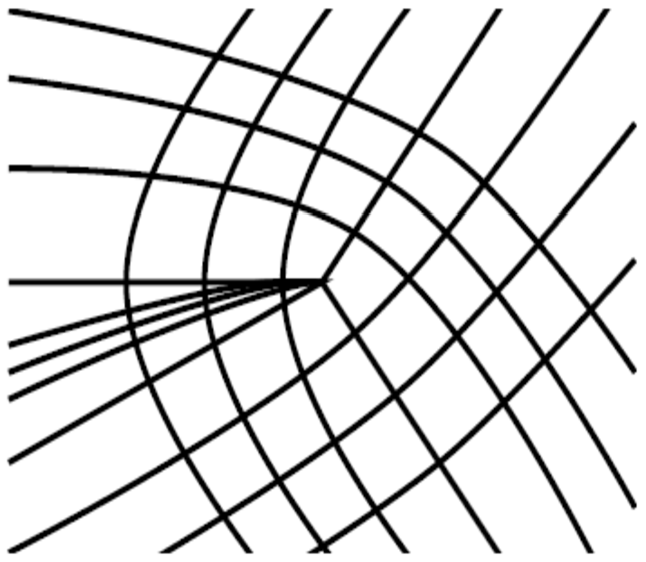}}
\caption{Axial Configurations in a neighborhood of an axiumbilic point of type   $E^1_{34}$.}
\label{e34}
\end{figure}
\end{prop}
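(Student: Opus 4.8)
The plan is to analyze the Lie-Cartan vector field $X$ on the Lie-Cartan surface $\Gg=0$ in the normal form \eqref{sup.cartan}, exactly as was done for $E_3$, $E_4$, $E_5$ in Proposition \ref{axi.estaveis}, but now on the boundary case $\Delta(a,b)=0$ with $a\neq 0$. First I would record, using the factorization $P(p)=pR(p)$ from \eqref{P}, the singular points of $X$ on the projective axis $p$: since $R(\pm1)=-4$, $R(0)=1+a$ and $R(p)\to+\infty$ as $p\to\pm\infty$, there are always two simple real roots $p_{<}<-1$ and $p_{>}>1$; the condition $\Delta(a,b)=0$ (with $(a,b)$ avoiding the cuspidal points $(-\tfrac{27}2,\pm\tfrac52\sqrt5)$ and the point $(-1,0)$) means $R$ has exactly one additional real root which is a double root $p^{*}$, so $X$ has four singular points on the axis: the three simple ones $p_0=0$, $p_{<}$, $p_{>}$ and the semi-hyperbolic one at $p^{*}$. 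Case $(ii)$, $a=-1$ with $b\neq0$, is handled by the same count after noting $R(0)=0$ forces $p_0=0$ itself to be the extra coincidence and one checks the remaining real root structure directly.

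Next I would compute the linear part of $X$ at each singular point from the matrix $DX(0,0,p)$ displayed before \eqref{p0}, whose nonzero eigenvalues on the tangent plane to $\Gg=0$ are $\lambda_1(p)$ and $\lambda_2(p)=-P'(p)$. At the three simple roots one gets, by the substitution already carried out in the excerpt, $\lambda_1=\dfrac{(p^2+1)^3}{p^2-1}$ and $\lambda_2=-pR'(p)$, both nonzero with opposite-enough signs to give hyperbolic saddles at $p_{<}$ and $p_{>}$, and at $p_0=0$ the eigenvalues $(a,-(a+1))$ which — since $a\neq0$ and, on $\Delta=0$, $a\leq-1$ so $a\neq-1$ in case $(i)$ while case $(ii)$ is excluded there — are a hyperbolic saddle as well. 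At the double root $p^{*}$ one has $R(p^{*})=R'(p^{*})=0$, hence $\lambda_2=-P'(p^{*})=-(R(p^{*})+p^{*}R'(p^{*}))=0$: the point is \emph{semi-hyperbolic}, with one nonzero eigenvalue $\lambda_1(p^{*})=\dfrac{((p^{*})^2+1)^3}{(p^{*})^2-1}\neq0$ along the direction transverse to the axis and a zero eigenvalue along the axis. This is precisely the local picture of a saddle-node, and I would invoke the standard reduction (center manifold / Andronov--Leontovich, as in \cite{andronov}, \cite{soto1}) to conclude that near $p^{*}$ the field $X$ on $\Gg=0$ is topologically a saddle-node, determining its two hyperbolic sectors and one parabolic sector. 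The sign of the quadratic coefficient of $R$ at $p^{*}$ (equivalently $R''(p^{*})\neq0$, which follows from $p^{*}$ being a genuine double and not triple root, guaranteed by excluding the cuspidal points) fixes the orientation of the parabolic sector.

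Having the phase portrait of $X$ on $\Gg=0$ — three saddles and one saddle-node strung along the projective line $\pi^{-1}(\p)$, with the separatrices landing transversally to the axis because $\lambda_1\neq0$ everywhere — I would then project by $\pi\colon PM\to M$, restricted to $\LL_\alpha=\Gg^{-1}(0)$, which by the transversality hypothesis $a\neq0$ is a regular $\mathcal C^{r-2}$ surface near the axis and a ramified $4$-cover of a punctured neighborhood of $\p$ (Section \ref{sec:1}). The four singular points of $X$ on the axis are the four roots of the quartic differential equation at $\p$; each separatrix of a saddle projects to a pair of axial separatrix curves emanating from $\p$, and the saddle-node at $p^{*}$ projects to the merging pair of separatrices that is the signature of the $E^1_{34}$ transition — geometrically the configuration obtained by colliding one of the four $E_4$-saddles with the $E_4$-node. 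Reading off sectors on $M$ from the sectors on $\Gg=0$, counting one-to-one which hyperbolic/parabolic sectors of $X$ cover which sectors of the axial net, yields exactly the two pictures in Figure \ref{e34} (one for the principal net $\Xx_\alpha$, one for the mean net $\Yy_\alpha$).

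\textbf{Main obstacle.} The delicate step is the semi-hyperbolic analysis at $p^{*}$: verifying that the zero eigenvalue is \emph{simple} (i.e. that $p^{*}$ is a non-degenerate saddle-node, which needs $R''(p^{*})\neq0$ and hence the explicit exclusion of the cuspidal points of $\Delta(a,b)=0$), computing the sign of the relevant second-order term to orient the parabolic sector, and then correctly transferring the resulting sectorial decomposition through the $4$-fold ramified projection $\pi|_{\LL_\alpha}$ so that the separatrix connections on $M$ are the right ones — in particular checking that no separatrix of $X$ is tangent to the axis (which would spoil regular projection) and that the global gluing along the whole projective line produces a consistent net rather than a spurious one. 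The rest is the bookkeeping of eigenvalue signs already begun in the excerpt and the routine invocation of the planar saddle-node normal form.
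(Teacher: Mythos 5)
Your proposal is correct and follows essentially the same route as the paper: both identify the singularities of the Lie--Cartan field on the regular surface $\Gg=0$ as the roots of $P(p)=pR(p)$, use the eigenvalue formulas $\lambda_1=\frac{(p^2+1)^3}{p^2-1}$, $\lambda_2=-pR'(p)$ to get three hyperbolic saddles plus one semi-hyperbolic point at the double root, establish a quadratic saddle-node there with center manifold along the projective axis, and project. The only (inessential) difference is organizational: the paper rotates coordinates so that the double root sits at $p=0$ (reducing to $a=-1$, $b\neq 0$, where $P(p)=p^2(p^3-bp^2-5p+b)$) and computes the saddle-node explicitly in the chart $(x,p)$ as $\dot x=-x+bxp+O(3)$, $\dot p=-bp^2+O(3)$, whereas you work at a general $p^{*}$ and invoke the non-degeneracy $R''(p^{*})\neq 0$ guaranteed by excluding the cuspidal points.
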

\begin{proof}
Since the condition of transversality ($a\ne 0$) is preserved at an axiumbilic point of type $E^1_{34}$
the implicit surface defined by equation (\ref{implicit}) is regular in a neighborhood of the projective line.
From
the hypotheses
 $\Delta(a,b)=0$, $(a,b)\ne (-1,0)$  and $(a,b)\ne (-\frac{27}{2},\pm \frac 52\sqrt{5})$
 or
  $b\neq 0$, if $a=-1$,
  the polynomial $P(p)=p[(p^4-6p^2+1)+(1-p^2)(a+bp)]=pR(p)$, which defines the singularities of the Lie-Cartan vector field,
  has
  one
   double root
   and three real simple roots.
	
\nin With no loss of generality, we can consider the case $a=-1$ and $b\neq 0$, where $p=0$ is a double root of the polynomial $P(p)$. In this case, we have $P(p)=p^2(p^3-b p^2-5 p+b)$.	

\nin The   eigenvalues of $DX$ at $(0,0,p)$   are given by:
 $ \lambda_1=4p^4-3bp^3 -9 p^2+bp-1$ and $\lambda_2=  p(-5p^3+4bp^2+15p-2b)$.

\nin Therefore, at the singular points $(0,0,p)$, $p\neq 0$,  of $X$ it follows that:
 $\lambda_1=\frac{(p^2+1)^3}{p^2-1}$ and $ \lambda_2=-\frac{p^2(p^4+2p^2+5)}{p^2-1} $.
Then, $\lambda_1\lambda_2<0$ when $p\ne 0$
and these three singular points of $X$  are  hyperbolic saddles.
At  $p=0$, double root of $P$, it follows that $\lambda_1=-1,\; \lambda_2=0$. Recall that the eigenspace associated to $\lambda_1$ is transversal to the axis $p$ and that one associated to $\lambda_2$ is the projective axis itself.

 \nin Since ${\Gg}_y(0,0,0)=1$, it follows from the Implicit Function Theorem that
$y(x,p)= x p+O(3)$ is defined in a neighborhood of $(0,0,0)$ such that ${\Gg}(x,y(x,p),p)=0$.
In this case, the  Lie-Cartan vector field  in the chart  $(x,p)$ is given by:

\begin{equation}
\left \{
\begin{array}{l}
\label{lie.cartan.restrito}
\dot{x}=  -x+b x p+O(3) \\
\dot{p}= -b p^2 + O(3)
\end{array}
\right .
\end{equation}
with  $b\neq 0$.
Therefore, $(0,0,0)$ is a quadratic saddle-node   with the center manifold tangent to the projective line. The phase portrait is sketched in   Figure  \ref{cartan.e34}, and the projections of the integral curves are the axial lines shown in Figure   \ref{e34}.
\begin{figure}[h]
       \centering  
       \fbox{
       \includegraphics[scale=0.5]{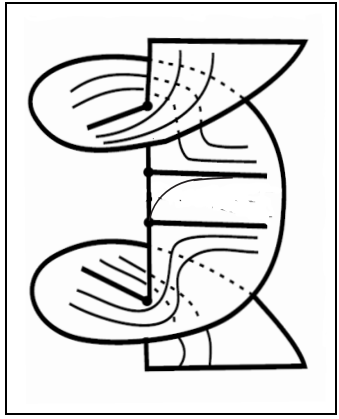}}
\caption{Integral curves of  $X|_{{\Gg}=0}$ in the neighborhood of the projective line in the case of an axiumbilic point of type $E^1_{34}$}
\label{cartan.e34}
\end{figure}

\nin When $(a,b)\ne (-1,0)$,   $(a,b)\ne (-\frac{27}{2},\pm \frac 52\sqrt{5})$  and
$\Delta(a,b)= 0$ the polynomial $P(p)=p[(p^4-6p^2+1)+(1-p^2)(a+bp)]$ has a double root $p_0\neq 0$ and three real simple roots.
This case is reduced to the case when $p=0$ is a double root, making an appropriated rotation of coordinates in the plane $\{x,y\}$ so that the double root $p_0$ is, in the new coordinates, located at  $p=0$. 
\end{proof}

\begin{prop}
\label{proposicao.e34}
Let $\alpha \in {\Ii}^r$, $r\geq 5$, be an immersion such that   $\p$
is axiumbilic point of type $E^1_{34}$.
Then, there is a neighborhood   $V$ of $\p$, a neighborhood  ${\Vv}$ of
 $\alpha$ and a function ${\Ff}: {\Vv}\fun \R$ of class  ${\Cc}^{r-3}$ such that
 for each  $\mu \in {\Vv}$ there is an unique axiumbilic point
  $\p_{\mu} \in V$ such that:
\begin{enumerate}[$i)$]
	\item $d {\Ff}_{\alpha} \neq 0$,
	\item ${\Ff}(\mu)<0$ if and only if  $\p_{\mu}$ is axiumbilic point of type $E_3$,
	\item ${\Ff}(\mu)>0$ if and only if $\p_{\mu}$ is axiumbilic point of type $E_4$,
	\item ${\Ff}(\mu)=0$ if, and only if,  $\p_{\mu}$ is axiumbilic point of type  $E^1_{34}$.
\end{enumerate}
\end{prop}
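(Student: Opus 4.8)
The plan is to realize the bifurcation as a standard unfolding problem, treating the ``parameter'' $a$ from Proposition~\ref{prop.axial} as the bifurcation function. Start from an immersion $\alpha$ with an axiumbilic point $\p$ of type $E^1_{34}$, placed at the origin of a Monge chart normalized as in Proposition~\ref{prop.axial}, so that the differential equation of axial lines has the form \eqref{axial.monge} with $y$-coefficient $1$ and linear coefficient $(ax+by)$, where $a\ne 0$ (transversality). For immersions $\mu$ in a small $\Cc^r$ neighborhood $\Vv$ of $\alpha$, the coefficient functions $a_0^\mu, a_1^\mu$ of the quartic \eqref{quartica.axiais} depend continuously (in fact $\Cc^{r-2}$, or $\Cc^{r-3}$ after the normalization that uses first derivatives) on $\mu$ in the $\Cc^r$ topology; since the contact of the curves $a_0=0,\ a_1=0$ at $\p$ is transversal for $\alpha$, the Implicit Function Theorem gives a unique nearby axiumbilic point $\p_\mu\in V$ depending $\Cc^{r-2}$ on $\mu$, and $\p_\alpha=\p$. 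This handles the ``unique axiumbilic point $\p_\mu\in V$'' assertion.

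Next I would define $\Ff$. At each $\p_\mu$ apply the normalization of Proposition~\ref{prop.axial} (choice of rotation solving the quintic in $\tan\theta$, plus a homothety) to produce a well-defined number $a(\mu)$; one must check that this construction can be made to depend smoothly on $\mu$ — the relevant root of the quintic is simple for $\alpha$ (since $a_{10}=0,\ a_{01}\ne0$ there, or equivalently since the normalization is nondegenerate), so it persists and varies $\Cc^{r-3}$ with $\mu$. Then the three axiumbilic types $E_3,E_4,E_5$ are distinguished, per the classification recalled before Proposition~\ref{axi.estaveis}, by the sign of $\Delta(a(\mu),b(\mu))$ together with the sign of $a(\mu)$; at $\mu=\alpha$ we are on the curve $\Delta=0$ (case~$(i)$ of Definition~\ref{defn.e34}) or at the special line $a=-1,\ b\ne0$ (case~$(ii)$), with $a\ne0$ throughout. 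Set $\Ff(\mu):=\Delta(a(\mu),b(\mu))$ in case~$(i)$ (with the orientation/sign of $\Delta$ chosen so that $\Ff<0$ corresponds to $E_3$, i.e.\ two real roots of $R$, and $\Ff>0$ to $E_4$, four real roots with $a<0$), and $\Ff(\mu):=a(\mu)+1$ in case~$(ii)$. With this choice, items $(ii)$–$(iv)$ are immediate translations of the classification by signs, because $a(\mu)$ stays on the same side of $0$ (it never vanishes near $\alpha$) and, in case~$(ii)$, on the relevant side of $-1$.

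The real content — and the step I expect to be the main obstacle — is item $(i)$, the transversality $d\Ff_\alpha\ne0$ of the bifurcation function, which is what makes this a genuine saddle-node–type unfolding rather than a degenerate family. Equivalently one must show that $\mu\mapsto\Delta(a(\mu),b(\mu))$ (resp.\ $\mu\mapsto a(\mu)+1$) has nonzero differential at $\alpha$ in the space of immersions. The natural way is to exhibit an explicit one-parameter deformation $\alpha_t$ of $\alpha$ — most economically a perturbation of the fourth-order Taylor coefficients $r_{ij},s_{ij}$ in the Monge chart (say tilting the $r_{30}$ or $s_{30}$ entry, which enters $\alpha_1,\ldots,\alpha_4$ and hence $a,b$ via \eqref{eq:a0a1} and Proposition~\ref{prop.axial}) — and to compute $\frac{d}{dt}\big|_{0}\Delta(a(\alpha_t),b(\alpha_t))\ne0$ directly; since $\nabla\Delta$ vanishes only at the two cuspidal points $(-\tfrac{27}{2},\pm\tfrac52\sqrt5)$, which are excluded by Definition~\ref{defn.e34}$(i)$, and since $(a,b)$ can be moved in an arbitrary direction by fourth-order perturbations (the map $\mu\mapsto(a(\mu),b(\mu))$ is a submersion — this submersivity is itself the key lemma, to be verified by inspecting the linearization of \eqref{eq:a0a1} and the rotation/homothety normalization), one obtains a direction along which $\Ff$ moves with nonzero speed. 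In case~$(ii)$ the same submersivity of $\mu\mapsto a(\mu)$ gives $\frac{d}{dt}\big|_0 a(\alpha_t)\ne0$ for a suitable perturbation, hence $d\Ff_\alpha\ne0$. Finally I would note that the phase-portrait descriptions on the two sides ($E_3$ vs.\ $E_4$) and on the critical level ($E^1_{34}$, Proposition~\ref{proposicao.e3434}) together with the persistence of hyperbolic singularities of the Lie--Cartan vector field confirm that crossing $\Ff=0$ is exactly the collision/birth of a pair of Lie--Cartan singularities on the projective line — the announced analogy with the planar saddle-node — so no further argument is needed for the qualitative statements.
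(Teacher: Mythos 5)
Your overall strategy coincides with the paper's for case $(i)$ of Definition \ref{defn.e34}: locate the unique $\p_\mu$ by the Implicit Function Theorem using the transversal intersection of $a_0=0$ and $a_1=0$, set ${\Ff}(\mu)=\Delta(a(\mu),b(\mu))$, and obtain $d{\Ff}_\alpha\neq 0$ from an explicit deformation of the Monge chart combined with the non-vanishing of $\nabla\Delta$ away from the cuspidal points; the paper does exactly this, using the cubic perturbation $(0,0,t(\tfrac16x^3-\tfrac12xy^2),tx^2y)$, which shifts $\alpha_2$ to $\alpha_2+t$ and hence moves $a(\mu)$ with nonzero speed.

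The gap is in your case $(ii)$. The choice ${\Ff}(\mu)=a(\mu)+1$ cannot satisfy items $(ii)$--$(iii)$: a direct computation gives $\Delta(-1,b)=4b^2(b^4+22b^2+125)>0$ for $b\neq 0$, so for $(a,b)$ near $(-1,b_0)$ with $b_0\neq 0$ one has $\Delta>0$ on \emph{both} sides of the line $a=-1$. By the classification recalled before Proposition \ref{axi.estaveis}, both sides are then of type $E_4$ (the quartic $R$ keeps four simple real roots; crossing $a=-1$ merely exchanges the roles of the singularities at $p=0$ and at the nearby root of $R$). Hence ${\Ff}=a+1<0$ does not correspond to $E_3$, and your claim that items $(ii)$--$(iv)$ are ``immediate translations of the classification by signs'' fails here. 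The point you miss -- which the paper makes explicitly -- is that the presentation with $a=-1$ is a degenerate normalization: in that chart the jet extension is tangent, not transversal, to the $E^1_{34}$ stratum, so no function of $(a,b)$ built in that chart detects the $E_3\leftrightarrow E_4$ transition. The paper's remedy is a further rotation in the $\{x,y\}$ plane placing the double root of $P$ at some $p_0\neq 0$, which arranges $a(\alpha)\notin\{-1,-\tfrac{27}{2}\}$ and puts the point on the curve $\Delta=0$; after this, the single function ${\Ff}(\mu)=\Delta(a(\mu),b(\mu))$ works uniformly in both cases of the definition. Your case-$(ii)$ function must be replaced by this re-normalization. (A minor additional slip: $a$ and $b$ are governed by the third-order coefficients $r_{30},\dots,s_{03}$ through $\alpha_1,\dots,\alpha_4$, so the relevant deformations are cubic, not ``fourth-order'' as you state.)
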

\begin{proof}
Since $\p$ is a transversal axiumbilic point of   $\alpha$, the existence of the neighborhoods  ${\Vv}$ and $V$ follows from the Implicit Function Theorem. For
$\mu \in {\Vv}$ with an axiumbilic point $\p_{\mu} \in V$, after a rigid motion $\Gamma_{\mu}$ in  $\R^4$, locally the immersion $\mu \in {\Vv}$ can be parametrized in terms of a Monge chart $(x,y,R_{\mu}(x,y),S_{\mu}(x,y))$, with the origin being the axiumbilic point $p_{\mu}$ and
\begin{equation*}
\aligned
R_{\mu}(x,y)=& \frac{r_{20}(\mu)}{2}x^2+r_{11}(\mu)xy+\frac{r_{02}(\mu)}{2}y^2+\frac{r_{30}(\mu)}{6}x^3+\frac{r_{31}(\mu)}{2}x^2y\\
+&\frac{r_{13}(\mu)}{2}xy^2+\frac{r_{03}(\mu)}{6}y^3+ h.o.t.,\endaligned
\end{equation*}
\begin{equation*}\aligned
S_{\mu}(x,y)= & \frac{s_{20}(\mu)}{2}x^2+s_{11}(\mu)xy+\frac{s_{02}(\mu)}{2}y^2+
\frac{s_{03}(\mu)}{6}x^3+\frac{s_{21}(\mu)}{2}x^2y\\
+&\frac{s_{12}(\mu)}{2}xy^2+\frac{s_{03}(\mu)}{6}y^3+ h.o.t.\endaligned
\end{equation*}

For $\mu$, performing rotations and
 homoteties
  as described in Section  \ref{sec:2},
    the coefficients $a_{\mu}$ and  $b_{\mu}$ can be
    expressed
    in function of the coefficients of the surface
    presented
     in a Monge chart, as was done in  Proposition \ref{prop.axial},
considering the coefficients in function of the parameter $\mu\in{\Vv}$.

Define ${\Ff}(\mu)=   \Delta(a(\mu),b(\mu))$ whose zeros define locally the manifold of immersions with
an
  $E_{34}^1$ axiumbilic point.
Here, $\Delta(a,b)$,
given by equation \eqref{eq:delta},
 is the discriminant of the polynomial $R(p)= (p^4-6p^2+1)+(1-p^2)(a+bp)$.
 
Notice that
 due to the particular representation of the 3-jets
taken here, the condition $a(\mu)=-1$   in
Definition \ref{defn.e34},
the jet extension of the immersion
 is not transversal,
but tangent, to the manifold of jets with $E_{34}^1$ axiumbilic points.
 It is always  possible, by an appropriate rotation in the plane $\{x,y\}$ to suppose that $a(\alpha)\notin \{-\frac{27}{2},-1\}$.
 See Section  \ref{sec:2}.

 Assertions $(ii),(iii)$ and $(iv)$ follow from
  the
 definition of ${\Ff}$ and
 the
 previous analysis on the sign of
 the
 discriminant  $\Delta(a_{\mu},b_{\mu})$.

Moreover, the derivative of ${\Ff}(\mu)$ in the
direction of the
coordinate $a$ does not
vanish, leading to conclude that
 $d {\Ff}_{\alpha} \neq 0$.

In fact, assuming $s_{11}(\alpha)=\frac 12 r \ne 0$, it follows that
$$\aligned a_0(\mu)=&y+0(2),\\
a_1(\mu)(x,y)= &-\frac{4(r(\mu)^2+s(\mu)^2)\alpha_2(\mu)}{r(\mu)\left(  r(\mu)\alpha_3(\mu)+s(\mu)\alpha_4(\mu)\right)}x\\
+&\frac{4(s(\mu)\alpha_3(\mu)-r(\mu)\alpha_4(\mu))}{r(\mu)\alpha_3(\mu)+s(\mu)\alpha_4(\mu)}y +O(2)\\
=&a(\mu)x+b(\mu)y+O(2),\\
 \alpha_1=&s_{12} -s_{30}+2 r_{21},\; \alpha_2= r_{30}-r_{12}+2s_{21},\\
 \;  \alpha_3=& s_{03}-s_{21}+2r_{12},\;  \alpha_4= r_{21}-r_{03}+2s_{12}
\endaligned
$$

Consider the deformation
$$\mu= \left( x,y,R_{\alpha}(x,y),S_{\alpha}(x,y)\right)+\left(0,0,t(\frac 16 x^3-\frac 12 xy^2), t x^2y \right).$$
Then, as $\alpha_2=r_{30}-r_{12}+2s_{21}$, it follows that $a(\mu)=-\frac{4(r ^2+s ^2)(\alpha_2+t)}{r \left(  r \alpha_3 +s \alpha_4 \right)}$ and

$$\frac{d}{dt}\left(\Delta(a(\mu),b(\mu)\right)\bigg |_{t=0}=\frac{\partial\Delta}{\partial a}\cv \frac{da}{dt}=\frac{\partial\Delta}{\partial a}\cv \left(-\frac{4(r^2+s^2)}{r(r\alpha_3+s\alpha_4)}\right) \ne 0.$$

In the case where $s_{11}(\alpha)=0$
 it follows that $r_{11}(\alpha)=-\frac 12 s\ne 0$, $\alpha_1\alpha_4\ne 0$ and $\alpha_2(\mu)=0$.
Now consider the  deformation
$$\mu= \left( x,y,R_{\alpha}(x,y),S_{\alpha}(x,y)\right)+\left(0,0,t x^2y,t(-\frac 16 x^3+\frac 12 xy^2 ) \right).$$

Then, $a(\mu)=\frac{ 4(\alpha_1+t)}{\alpha_4 }$ and

$$\frac{d}{dt}\left(\Delta(a(\mu),b(\mu)\right) \bigg |_{t=0}=\frac{\partial\Delta}{\partial a}\cv \frac{da}{dt}=\frac{\partial\Delta}{\partial a}\cv \left(\frac{ 4}{\alpha_4 }\right) \ne 0.$$

\end{proof}

\begin{figure}[h]
       \centering  %
       \fbox{\includegraphics[scale=0.29]{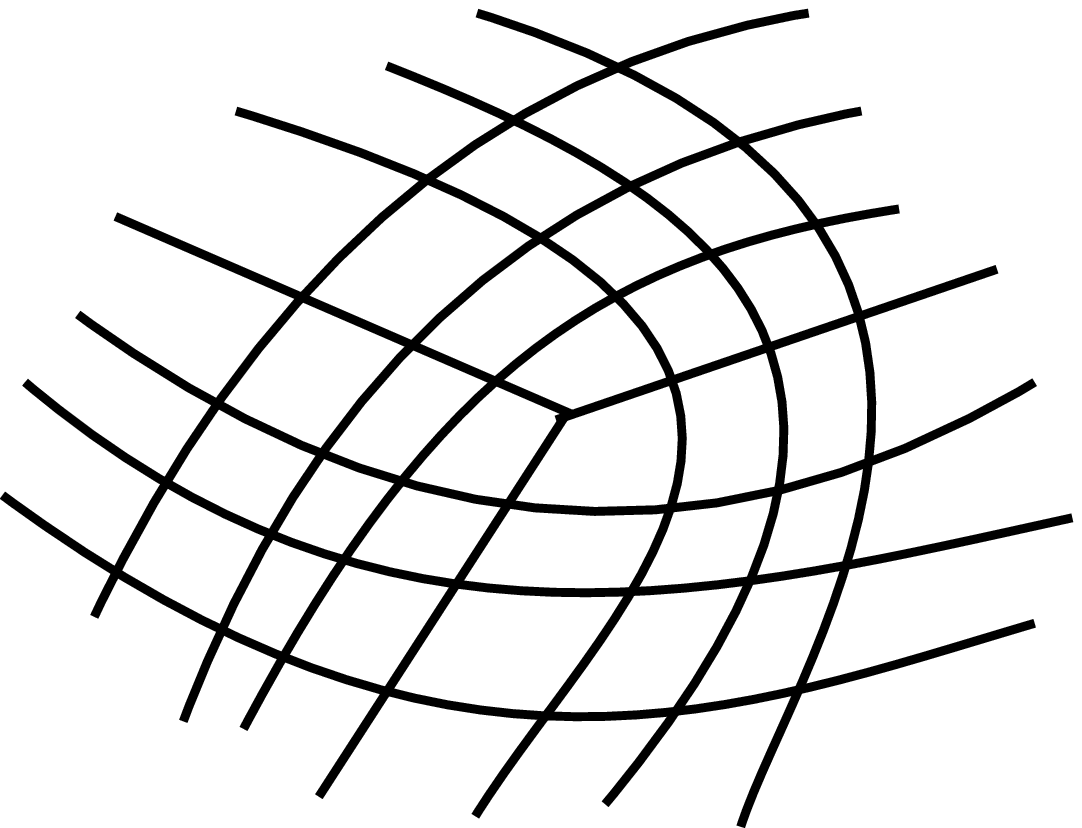}\;\;\;\;\;\;\;\;\;\;\;\;
       			 \includegraphics[scale=0.45]{e34.eps}\;\;\;\;\;\;\;\;\;\;\;\;
       			 \includegraphics[scale=0.29]{e41.eps}}
			  \fbox{\includegraphics[scale=0.28]{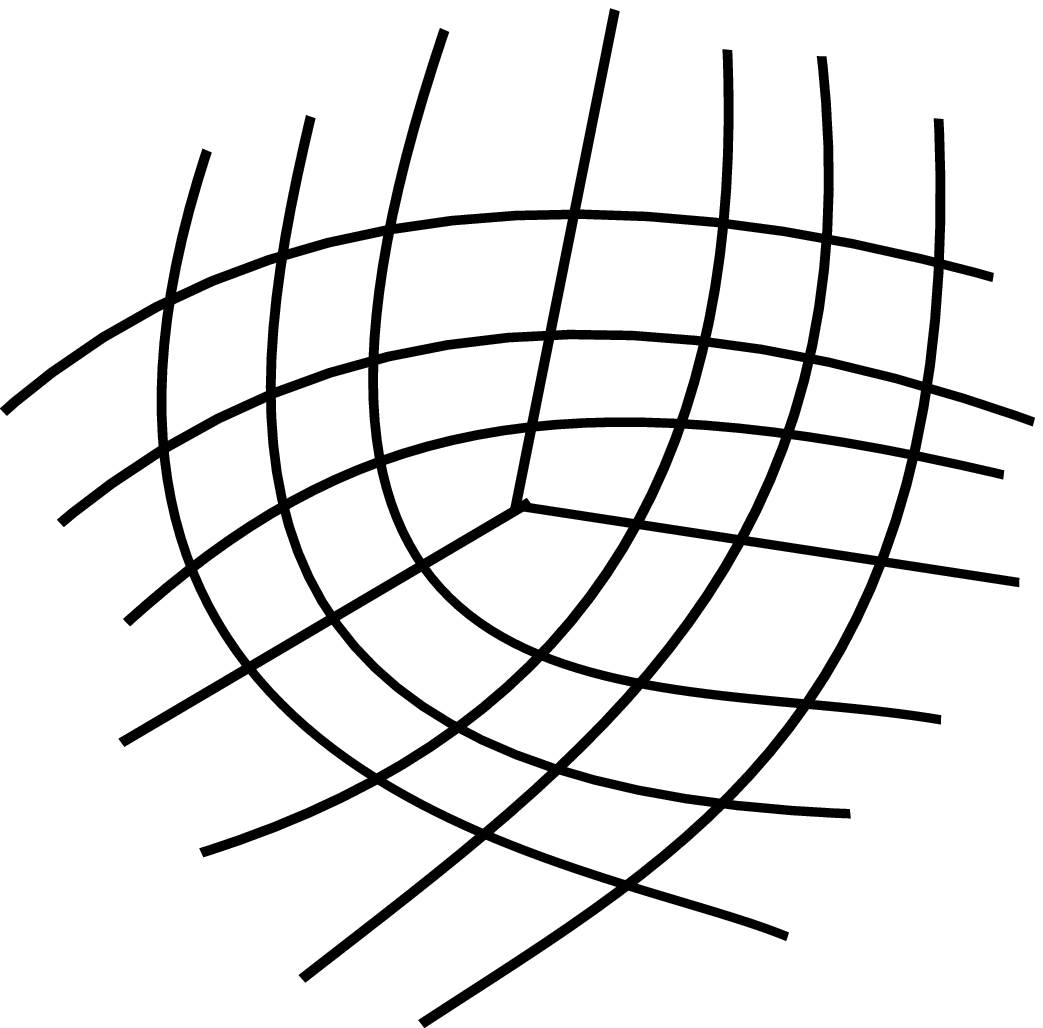}\;\;\;\;\;\;\;\;\;\;\;\;\;\;
       			 \includegraphics[scale=0.45]{e342.eps}\;\;\;\;\;\;\;\;\;\;\;\;\;\;\;
       			 \includegraphics[scale=0.28]{e42.eps}}
\caption{Axial configuration near axiumbilic points.    $E_3$ (left), $E^1_{34}$ (center)   and $E_4$ (right).}
\end{figure}
\begin{figure}[h]
       \centering  
       \fbox{\includegraphics[scale=1.0]{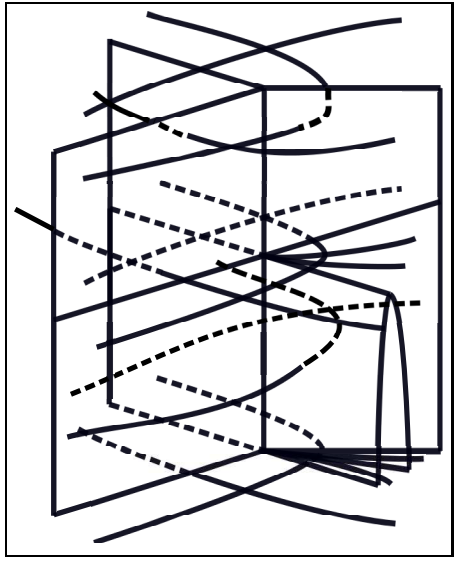}}
\caption{ Bifurcation diagram of the axial configuration near  an axiumbilic point $E^1_{34}$  and the structure of separatrices.}
\label{bifu.e34}
\end{figure}

\subsection{The axiumbilic point $E^1_{4,5}$}

Consider the Monge chart described by equations
  (\ref{Rx}) and (\ref{S}).
  Suppose that the origin is an axiumbilic point, which is expressed by
\begin{equation}
\label{Rmonge}
\aligned
R(x,y)=& \frac{r_{20}}{2}x^2+r_{11}xy+\frac{r_{02}}{2}y^2+\frac{r_{30}}{6}x^3+\frac{r_{21}}{2}x^2y+\frac{r_{12}}{2}xy^2+\frac{r_{03}}{6}y^3\\
+ &\frac{r_{40}}{24}x^4+\frac{r_{31}}{6}x^3 y+\frac{r_{22}}{4}x^2 y^2+ \frac{r_{13}}{6}xy^3+\frac{r_{04}}{24}y^4+h.o.t.,\endaligned
\end{equation}
\begin{equation}
\label{Smonge}\aligned
S(x,y)= &\frac{s_{20}}{2}x^2+s_{11}xy+\frac{s_{02}}{2}y^2+ \frac{s_{30}}{6}x^3+\frac{s_{21}}{2}x^2y+\frac{s_{12}}{2}xy^2+\frac{s_{03}}{6}y^3\\
+ &\frac{s_{40}}{24}x^4+\frac{s_{31}}{6}x^3 y+\frac{s_{22}}{4}x^2 y^2+ \frac{s_{13}}{6}xy^3+\frac{s_{04}}{24}y^4+h.o.t.,\endaligned
\end{equation}
where, $r_{02}=r_{20}+r,\;  r_{11}=-\frac 12 s,\; s_{02}=s_{20}+s,\;  s_{11}= \frac 12 r. $

\nin Let
$\alpha_1=s_{12} -s_{30}+2 r_{21},\; \alpha_2= r_{30}-r_{12}+2s_{21},\;  \alpha_3= s_{03}-s_{21}+2r_{12},\;  \alpha_4= r_{21}-r_{03}+2s_{12},$
$\beta_1= s_{22}-s_{40}+2r_{31},\; \beta_2= r_{40}-r_{22}+2s_{31}, \; \beta_{3}= s_{13}-s_{31}+2r_{22}, \; \beta_{4}= r_{31}-r_{13}+2s_{22},$
\newline
$\; \beta_{5}=s_{04}-s_{22}+2r_{13},\;
\beta_{6}= r_{22}-r_{04}+2s_{13}.$

The functions $a_0$ and $a_1$ (see Proposition \ref{eq.dif.axial}) are given by
\begin{equation}
\label{a0.xy}
a_0(x,y)=a_{10} x+ a_{01}y+ \frac 12 a_{20}x^2+a_{11}xy+\frac 12a_{02}y^2+h.o.t.
\end{equation}
and
\begin{equation}
\label{a1.xy}
a_1(x,y)=b_{10} x+ b_{01}y+ \frac 12 b_{20}x^2+b_{11}xy+\frac 12b_{02}y^2+h.o.t.,
\end{equation}
where
\begin{equation}\label{eq:coefa}\aligned
a_{10} =& \frac 12(r\alpha_1+ s\alpha_2), \hskip 1cm
a_{01}=  \frac 12(r\alpha_3+ s\alpha_4), \\
a_{20}=& -\alpha_2r_{21}+\alpha_1s_{21}+\bigg [\frac{\beta_1}{4}+\frac{s_{20}}{2} (r_{20}^2+s_{20}^2) \bigg ] r
+\bigg [\frac{\beta_2}{4}-\frac{r_{20}}{2} (r_{20}^2+s_{20}^2)\bigg ]s \\
+&(r_{20}^2-s_{20}^2)sr -\frac 38 (r^2+s^2)(s_{20}r-r_{20}s)+r_{20}s_{20}(s^2- r^2), \\
a_{11}=& -r_{12}\alpha_2 +s_{12}  \alpha_1-r_{21}  \alpha_4+ s_{21}  \alpha_3
-  \bigg [ \frac{\beta_3}{2}+r_{20}(r_{20}^2+s_{20}^2) \bigg ] r\\
+&\bigg [\frac{\beta_4}{2}-s_{20}(r_{20}^2+s_{20}^2) \bigg ]s 
- 2  s_{20}  r_{20}  r  s
 -\frac 12(3s_{20}^2+r_{20}^2) s^2\\
 -&\frac 12(3r_{20}^2+s_{20}^2)r^2
 -\frac 38(r^2+s^2)^2 -\frac 54(r^2+s^2)(r_{20}r+s_{20}s), \\
a_{02}=&-r_{12}  \alpha_4+s_{12}  \alpha_3+ \bigg [\frac{\beta_5}{2}-\frac{s_{20}}{2}(r_{20}^2+s_{20}^2) \bigg ]r
  +\bigg[\frac{\beta_6}{2}+\frac{r_{20}}{2}(r_{20}^2+s_{20}^2) \bigg ] s \\
	+&(-2  s_{20}^2+2  r_{20}^2)  s  r
  +2  s_{20}  r_{20}(  s^2-2  r^2 )+
 -\frac 98(r^2+s^2)(r s_{20}-s r_{20}),\endaligned
\end{equation}

\begin{equation}\label{eq:coefb}\aligned
b_{10} =& 2(s\alpha_1- r\alpha_2),\hskip 1cm
b_{01}=  2(s\alpha_3- r\alpha_4), \\
b_{20}=& \alpha_1^2 + \alpha_2^2 -4(s_{21}\alpha_2 + r_{21} \alpha_1)
 + \bigg [-\beta_2 +2 r_{20}(r_{20}^2+s_{20}^2) \bigg ] r  \\
+& \bigg [\beta_1 +2 s_{20}(r_{20}^2+s_{20}^2) \bigg ] s
- \frac{1}{2}(r^2+s^2)(s_{20}s + r_{20}r) +4(r_{20}s-s_{20}r)^2 ,\\
b_{11}=& 2(\alpha_3 \alpha_1 + \alpha_2 \alpha_4)-4(\alpha_1 r_{12} + \alpha_2 s_{12} + \alpha_3 r_{21} + \alpha_4 s_{21})\\
+& 2 \bigg [-\beta_4 +2 s_{20}(r_{20}^2+s_{20}^2) \bigg ] r 
+2 \bigg [\beta_3 -2 r_{20}(r_{20}^2+s_{20}^2) \bigg ] s
+4(s_{20}^2 - r_{20}^2)rs \\
+& 4 r_{20} s_{20}(r^2 - s^2), \\
b_{02}=& \alpha_3^2 +\alpha_4^2 +4(r_{12}^2+s_{12}^2)+4s_{12}(r_{21}-r_{03})+4r_{12}(s_{03}-s_{21})\\
+ &[-\beta_6 -2 r_{20}(r_{20}^2+s_{20}^2)] r 
+[\beta_5 - 2 s_{20}(r_{20}^2+s_{20}^2)]s
+2(r_{20}^2-3s_{20}^2)s^2 \\
+& 2(s_{20}^2-r_{20}^2)r^2.
\endaligned
\end{equation}

\begin{defn}
An axiumbilic point is said to be of type
  $E^1_{4,5}$ if the variety ${\LL}_{\alpha}$ has exactly 4 singular points which are of Morse type along
  the projective line.
\end{defn}

\begin{prop}
\label{equivalencia}
Consider a Monge chart and a homotety such that the differential equation  of axial lines is written as

$$a_0(x,y)(dx^4-6dx^2dy^2+dy^4)+a_1(x,y)dxdy(dx^2-dy^2)+0(3)=0,$$
where
$$\aligned a_0(x,y)=&   y+\frac 12 a_{20}x^2+a_{11}xy+\frac 12a_{02}y^2+h.o.t., \\
a_1(x,y)=&   b_{01}y+ \frac 12 b_{20}x^2+b_{11}xy+\frac 12 b_{02}y^2+h.o.t.
\endaligned $$

Then the following conditions are equivalent:
\begin{enumerate}[$i)$]

	\item the  curves $a_0=0$ and $a_1=0$ are regular and have  quadratic contact at $0$,
	
	\item the axiumbilic point $0$  is of type   $E^1_{4,5}$,
	
	\item the Lie-Cartan vector field defined in $\LL_{\alpha}$ has a quadratic saddle-node in the projective axis with the center manifold transversal to the projective line.
	
\end{enumerate}
\end{prop}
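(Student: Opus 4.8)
The plan is to show that, in the stated normal form, each of $(i)$, $(ii)$, $(iii)$ is equivalent to the single nondegeneracy
$b_{20}\ne a_{20}b_{01}$, under the regularity $b_{01}\ne 0$ implicit in the normal form (the linear part of $a_1$ being $b_{01}y$). Write $M(p)=p^{4}-6p^{2}+1$ and $N(p)=p-p^{3}$, so that in the chart $(x,y,p)$, $p=dy/dx$, one has ${\LL}_{\alpha}=\{\mathcal{G}=0\}$ with $\mathcal{G}(x,y,p)=a_{0}(x,y)M(p)+a_{1}(x,y)N(p)+\tilde H(x,y,p)$, $\tilde H$ of order $\ge 3$ in $(x,y)$, and the Lie--Cartan field given by $(\ref{liecartan})$. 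The first step is to record the $2$-jet of $\mathcal{G}$ along the projective line $\{x=y=0\}$: since $a_{10}=b_{10}=0$ and the origin-coefficients vanish, $\mathcal{G}\equiv\mathcal{G}_{x}\equiv\mathcal{G}_{p}\equiv 0$ there, while $\mathcal{G}_{y}(0,0,p)=M(p)+b_{01}N(p)=:R_{0}(p)$. Because $R_{0}(0)=1>0$, $R_{0}(\pm1)=-4<0$ and $R_{0}(p)\to+\infty$, the quartic $R_{0}$ always has four distinct real roots, one in each of $(-\infty,-1)$, $(-1,0)$, $(0,1)$, $(1,\infty)$; and ${\LL}_{\alpha}$ is singular at $(0,0,p_{0})$ precisely when $R_{0}(p_{0})=0$. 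A parallel computation in the chart $q=1/p$ shows that $p=\infty$ is a regular point of ${\LL}_{\alpha}$ at which the Lie--Cartan field does not vanish, so these four points are the only singular points of ${\LL}_{\alpha}$ along the projective line, and $p=0$ is the only singularity of $X$ lying on the smooth part of ${\LL}_{\alpha}$ there.

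For $(i)$: $\{a_{0}=0\}$ is regular at $0$ (gradient $(0,1)$) and $\{a_{1}=0\}$ is regular iff $b_{01}\ne0$, in which case both curves are tangent to the $x$-axis. Solving $a_{0}=0$ and $a_{1}=0$ for $y(x)$ and subtracting gives $\frac{b_{20}-a_{20}b_{01}}{2b_{01}}x^{2}+O(x^{3})$, so the contact is quadratic (order exactly $2$) precisely when $b_{20}\ne a_{20}b_{01}$.

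For $(ii)$: at a root $p_{0}$ of $R_{0}$ one uses $M(p_{0})=-b_{01}N(p_{0})$ to evaluate the Hessian of $\mathcal{G}$ at $(0,0,p_{0})$; the entries $\mathcal{G}_{xp}$, $\mathcal{G}_{pp}$ vanish, $\mathcal{G}_{yp}=R_{0}'(p_{0})$, and $\mathcal{G}_{xx}=(b_{20}-a_{20}b_{01})N(p_{0})$, whence $\det\Hess=-R_{0}'(p_{0})^{2}(b_{20}-a_{20}b_{01})N(p_{0})$. Since $R_{0}'(p_{0})\ne0$ ($p_{0}$ simple) and $N(p_{0})\ne0$ (as $p_{0}\notin\{0,\pm1\}$), the surface has a Morse (conical) singularity at each of these four points iff $b_{20}\ne a_{20}b_{01}$; thus ${\LL}_{\alpha}$ has exactly four Morse singular points along the projective line, i.e. $0$ is of type $E^{1}_{4,5}$, iff $b_{20}\ne a_{20}b_{01}$. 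For $(iii)$: since the coefficient of $x$ in $a_1$ vanishes here (so the parameter $a$ of Proposition~\ref{prop.axial} is $0$), formula $(\ref{p0})$ gives eigenvalues $0$ and $-1$ for $X$ at $(0,0,0)$, so the singularity is semi-hyperbolic; a direct computation of the linearization on ${\LL}_{\alpha}$ in the coordinates $(x,p)$ shows it is triangular with eigenvalue $0$ along the eigenvector $(1,-a_{20})$, which is transversal to the projective axis --- unlike the $E^{1}_{34}$ case, where the center manifold is tangent to it. Reducing to the center manifold $p=-a_{20}x+O(x^{2})$ and using that along it $a_{0}|_{{\LL}_{\alpha}}=O(x^{3})$ and $a_{1}|_{{\LL}_{\alpha}}=\tfrac12(b_{20}-a_{20}b_{01})x^{2}+O(x^{3})$, one obtains $\dot x=\mathcal{G}_{p}|_{{\LL}_{\alpha}}=\tfrac12(b_{20}-a_{20}b_{01})x^{2}+O(x^{3})$, so the saddle--node is quadratic iff $b_{20}\ne a_{20}b_{01}$. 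Chaining the three items yields $(i)\Leftrightarrow(ii)\Leftrightarrow(iii)$.

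I expect the main obstacle to be bookkeeping rather than ideas: one must carry the whole $2$-jet of $\mathcal{G}$ along the entire projective line $\{x=y=0\}$ (not merely at one point), keep the singular points of the surface ${\LL}_{\alpha}$ carefully separated from the singular points of the field $X$ (including the check at $p=\infty$), and push the center-manifold reduction at $p=0$ to second order. Once the normal-form identities $a_{10}=b_{10}=0$, $a_{01}=1$ are in force everything collapses to the single inequality $b_{20}\ne a_{20}b_{01}$, and the connection with the $E_{4}$/$E_{5}$ collision is transparent because $R_{0}$ automatically carries the four real roots characteristic of those two types.
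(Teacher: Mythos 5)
Your proof is correct and follows essentially the same route as the paper: reduce all three conditions to the single inequality $\chi=b_{20}-a_{20}b_{01}\neq 0$, verify the Morse character of the four conical singularities of $\LL_\alpha$ at the roots of $R_0(p)=(p^4-6p^2+1)+b_{01}p(1-p^2)$, and perform the center-manifold reduction at $p=0$ to exhibit the quadratic saddle-node with center direction $(1,-a_{20})$ transversal to the projective axis. The only cosmetic difference is that you evaluate $\det\Hess\,\Gg=-R_0'(p_0)^2\,\chi\,N(p_0)$ by substituting $M(p_0)=-b_{01}N(p_0)$ directly, where the paper instead computes the resultant of $R_0$ with $\Hess\,\Gg(0,0,p)$, obtaining $256\,\chi^4(16+b_{01}^2)^6$; both yield the same equivalence.
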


\begin{proof} The differential equation of axial lines can be written as
$$a_0(x,y)(dx^4-6dx^2dy^2+dy^4)+a_1(x,y)dxdy(dx^2-dy^2)+0(3)=0,$$
where
$$\aligned a_0(x,y)=&a_{10} x+ a_{01}y+\frac 12 a_{20}x^2+a_{11}xy+\frac 12a_{02}y^2+h.o.t. \\
a_1(x,y)=& b_{10} x+ b_{01}y+ \frac 12 b_{20}x^2+b_{11}xy+\frac 12 b_{02}y^2+h.o.t.
\endaligned $$
where the coefficients of $a_0$ and $a_1$ are given by equations \eqref{eq:coefa}
and \eqref{eq:coefb}. Here $O(3)$ means terms of order greater than  or equal to $3$ in the variables $x$ and $y$.

In
 what follows
it will be considered a Monge chart such that $a_{10}=0$. This is possible as shown in lemma \ref{lema:rota} and Proposition \ref{prop.axial}. Since the contact between $a_0=0$ and $a_1=0$ is supposed to be quadratic it
results that
$b_{10}=0$ and $a_{01}\cv b_{01}\ne 0$. Also by a homotety it is possible to obtain
$a_{01}=1$.

So, it results that:
\begin{eqnarray}
\label{formaa0}
a_0(x,y) &=&  y+\frac 12 a_{20}x^2+a_{11}xy+\frac 12a_{02}y^2+h.o.t. \\
a_1(x,y) &=&  b_{01}y+ \frac 12 b_{20}x^2+b_{11}xy+\frac 12 b_{02}y^2+h.o.t.
\end{eqnarray}

  Therefore, the  condition of quadratic contact between the two regular curves is expressed by
  $\chi= b_{20} -a_{20}b_{01}\ne 0$.

  \begin{af}
	\label{saddlenode}
In the neighborhood of  $(0,0,0)$, the  Lie-Cartan vector field restricted to the surface  ${\mathcal G}=0$, can be expressed in the chart    $(x,p)$ by
\begin{equation}
\label{campo.cartaxp}
\left \{
\begin{array}{l}
\dot{x}= \frac{\chi}{2}x^2+O(3),\\
\dot{p}= -p+\frac 32 a_{11}a_{20}x^2-
(a_{11}+\chi)p-b_{01}p^2+0(3)
\end{array}
\right .
\end{equation}
and $(0,0,0)$ is a saddle-node when $\chi\ne 0$.
\end{af}

\begin{prova}
Since ${\Gg}_y(0,0,0)=1 \neq 0$, it follows from Implicit Function Theorem that locally   $y=y(x,p)$ and  ${\Gg}(x,y(x,p),p)=0$.

The Taylor expansion of  $y(x,p)$ in the neighborhood of  $(x,p)=(0,0)$ is given by:
\begin{equation}
\label{yxp}
y(x,p)= -\frac 12 a_{20}x^2+O(3).
\end{equation}
The Lie-Cartan vector field restricted to the surface ${\mathcal G}=0$ is given by

\begin{equation*}
\label{campo.carta.xp}
\left \{
\begin{array}{l}
\dot{x}={\Gg}_p(x,y(x,p),p) =\frac 12 \chi x^2+O(3)\\
\dot{p}= -({\Gg}_x +p{\Gg}_y)(x,y(x,p),p)=-p+\frac 32 a_{11}a_{20}x^2-
(\chi + a_{11})p-b_{01}p^2+0(3)
\end{array}
\right .
\end{equation*}

The eigenvalues of the vector field  \eqref{campo.cartaxp} at $0$ are  $\lambda_1=0$  and $\lambda_2=-1$ with respective associated eigenspaces   $\ell_1=(1,-a_{20})$ and $\ell_2=(0,1)$.
By Invariant Manifold Theory the center manifold is tangent to $\ell_1$ and is given by
$W^c=\{(x,-a_{20}x+\frac 32 a_{20}(\chi+a_{11})x^2+O(3))\}$.

The restriction  of the vector field  \eqref{campo.cartaxp}  to the center manifold is given by
$[\frac 12 \chi x^2+0(3)]\frac{\partial }{\partial x}$.
 \end{prova}

\begin{af}
\label{4.pontos.criticos}
The function ${\mathcal G}$ has    exactly  $4$  critical points in the projective line, and they are of Morse-type of index 1 or 2 if and only if  $\chi \neq 0$.
\end{af}
\begin{proof}
\nin   The critical points of $\mathcal G$ along the projective line are determined by
\begin{equation}
\label{criticos}
S(p)=  {\mathcal G}_v(0,0,p)=(p^4-6p^2+1)+b_{01}p(1-p^2)=0,
\end{equation}
which has for  $4$ simple real roots located in the intervals $(-\infty,-1), (-1,0), (0,1)$ and $(1,\infty)$.   This follows from      $S(\pm 1)=-4 $,  $S(0)=1$ and from the discriminant
   $\Delta(S)=4(16 +b_{01}^2)^3>0$.

\nin Along the projective line, the determinant of the Hessian of $\mathcal G$  is given by

\begin{equation}
\label{hessiana}
\Hess {\Gg}(0,0,p)=-(a_{20}(1-6p^2+p^4)+b_{20}p(1-p^2))(b_{01}-12p-3b_{01}p^2+4p^3)^2 .
\end{equation}

The resultant between $S(p)$ and $\Hess {\Gg}(0,0,p)$ is given by
$256 \chi^4(16+b_{01}^2)^6$ and therefore $\Hess {\Gg}(0,0,p)\ne 0$ at the critical points of
$\Gg$. This implies that the critical points are of Morse type.
As $\Gg(0,0,p)=0$ it follows that the index of a critical point is $1$ or $2$ and so locally the level set   $\Gg=0$ is a cone.

The eigenvalues of the derivative of the Lie-Cartan vector field at a point $(0,0,p)$ are
given by:

$\lambda_1= -p(-4p^3+3b_{01}p^2+12p-b_{01}),\;\;\; \lambda_2=-1+18p^2-5p^4-2b_{01}p+4b_{01}p^3.$

At the critical points $p_i$ (satisfying $S(p_i)=0$) it follows that $\lambda_1=-\lambda_2=\frac{p^6+3p^4+3p^2+1}{p^2-1}$, then $\lambda^i_1 \lambda^i_2 <0$, for $i=1..4$.
 
Therefore, these 4 points are saddles of the Lie-Cartan vector field.
As the projective line is invariant it is follows that the other invariant manifold (stable or stable) of a singular point is transversal to the projective line.  \end{proof}

\end{proof}

\begin{prop}
Let $\alpha \in {\Ii}^r$, $r\geq 5$ and $\p$ be an axiumbilic point.
Suppose, in the Monge chart expressed by equations  (\ref{Rmonge}) and (\ref{Smonge}), that  $\alpha_1=\alpha_3=0$ and $\chi \ne 0$.  \; Then $\p$  is an axiumbilic point of type   $E^1_{4,5}$ and the axial configurations of  $\alpha$ in a neighborhood of  $\p$ is as shown in Figure  \ref{desenho.e45}.
\end{prop}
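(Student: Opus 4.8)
The plan is to show that the hypotheses place $\p$ exactly in the situation of Proposition~\ref{equivalencia}, and then to read off the axial configuration by projecting the Lie-Cartan phase portrait, as in \cite{sotogarcia1}, \cite{sotogarcia}. First I would use $\alpha_1=\alpha_3=0$: substituting it into \eqref{eq:coefa} and \eqref{eq:coefb} (recall $r_{11}=-\tfrac12 s$, $s_{11}=\tfrac12 r$ in \eqref{Rmonge} and \eqref{Smonge}) gives $(a_{10},a_{01})=\tfrac{s}{2}(\alpha_2,\alpha_4)$ and $(b_{10},b_{01})=-2r(\alpha_2,\alpha_4)$, so the linear parts of $a_0$ and $a_1$ are proportional; equivalently $T=\alpha_2\alpha_3-\alpha_1\alpha_4=0$ (cf.~\eqref{determinante2}), and the curves $a_0=0$ and $a_1=0$ are tangent at $\p$. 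A rotation in the $\{x,y\}$-plane carrying their common tangent line to the $x$-axis (Lemma~\ref{lema:rota}), followed by a homothety in $\R^4$ (Proposition~\ref{prop.axial}), brings the differential equation of axial lines to the normal form in the statement of Proposition~\ref{equivalencia}, with $a_{10}=b_{10}=0$ and $a_{01}=1$; the normalization $a_{01}=1$ is legitimate because both curves are regular at $\p$, that is $rs(\alpha_2^2+\alpha_4^2)\ne0$, which is part of the genericity of an $E^1_{4,5}$ point. In these coordinates $\chi=b_{20}-a_{20}b_{01}$, and $\chi\ne0$ is precisely the condition that $a_0=0$ and $a_1=0$ have quadratic (rather than higher order) contact at $\p$.

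Consequently condition $(i)$ of Proposition~\ref{equivalencia} holds, and that proposition gives at once that $\p$ is an axiumbilic point of type $E^1_{4,5}$ and that the Lie-Cartan vector field $X|_{\LL_\alpha}$ has a quadratic saddle-node on the projective line with center manifold transversal to it. To draw the configuration I would assemble the full local phase portrait of $X|_{\LL_\alpha}$ near the projective line from the two claims in that proof: by Claim~\ref{4.pontos.criticos}, $\LL_\alpha$ has exactly four singular points $(0,0,p_i)$, with $p_i$ the simple real roots of $S(p)=(p^4-6p^2+1)+b_{01}p(1-p^2)$, one in each of $(-\infty,-1)$, $(-1,0)$, $(0,1)$, $(1,\infty)$, each of Morse type, where $\Gg=0$ is a cone and $X$ is a hyperbolic saddle; by Claim~\ref{saddlenode}, at $(0,0,0)$, which is a regular point of $\LL_\alpha$ since $S(0)=1$, the field $X$ is the saddle-node \eqref{campo.cartaxp}, with hyperbolic stable direction transversal to the projective line and quadratic flow $\tfrac{\chi}{2}x^2\partial_x+O(3)$ along its transversal center manifold. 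Since the projective line is invariant, $X|_{\LL_\alpha}$ has on it five singularities: four hyperbolic saddles and one quadratic saddle-node, interpolating between the $E_4$ pattern (four saddles and one node) and the $E_5$ pattern (five saddles).

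Finally, $\LL_\alpha$ is a $4$-to-$1$ ramified covering of a punctured neighborhood of $\p$, ramified over $\p$, with $\pi|_{\LL_\alpha\setminus\pi^{-1}(\p)}$ a local diffeomorphism and the integral curves of $X|_{\LL_\alpha}$ projecting onto the axial lines; projecting the phase portrait just obtained gives the principal and mean axial configurations near $\p$, and matching the separatrices of the four saddles with the invariant manifolds of the saddle-node yields precisely the configurations of Figure~\ref{desenho.e45}, which completes the proof. I expect the only real work to be in this last step: keeping track of how the four sheets of the ramified covering assemble the four hyperbolic saddle sectors and the behaviour near the saddle-node over the punctured neighborhood, and checking the direction of the flow along the projective line between consecutive singularities, so as to confirm that the glued picture is the one drawn and to tell the principal net from the mean net. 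The local models themselves are elementary: near each of the four Morse points $\Gg=0$ is a cone carrying a hyperbolic saddle and, in contrast with the $E^1_{34}$ case, the saddle-node's center manifold is transversal to the projective line, so near $(0,0,0)$ the projection $\pi$ is a local diffeomorphism.
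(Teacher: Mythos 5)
Your proposal follows essentially the same route as the paper: you verify that $\alpha_1=\alpha_3=0$ forces the linear parts of $a_0$ and $a_1$ to be proportional (so $T=0$ and the curves are tangent at $\p$), normalize via Lemma \ref{lema:rota} and Proposition \ref{prop.axial}, invoke Proposition \ref{equivalencia} together with Claims \ref{saddlenode} and \ref{4.pontos.criticos} to obtain the four conic Morse saddles plus the quadratic saddle-node over $\p$, and then project the Lie-Cartan phase portrait. One small slip: at $(0,0,0)$ the hyperbolic eigendirection $(0,1)$ (eigenvalue $-1$) is tangent to the projective line, not transversal to it; it is the center manifold, tangent to $(1,-a_{20})$, that is transversal, as you yourself state correctly in your closing sentence.
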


\begin{figure}[h]
       \centering
        \fbox{
       \includegraphics[scale=0.7]{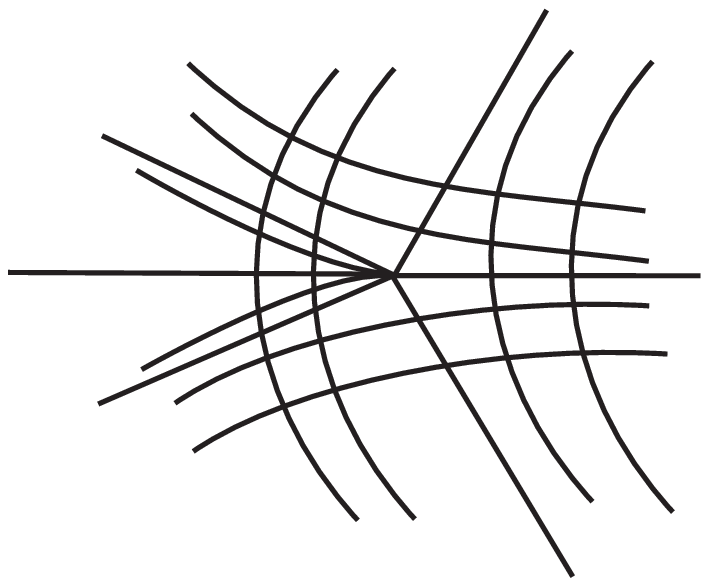}
       \includegraphics[scale=0.8]{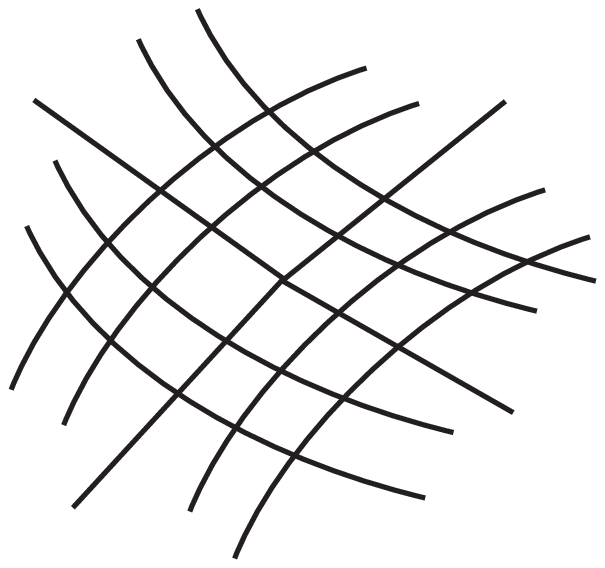}}
\caption{Axial configurations in a neighborhood of an axiumbilic point of type $E^1_{4,5}.$}
\label{desenho.e45}
\end{figure}

\begin{figure}[h]
       \centering  
       \fbox{\includegraphics[scale=0.3]{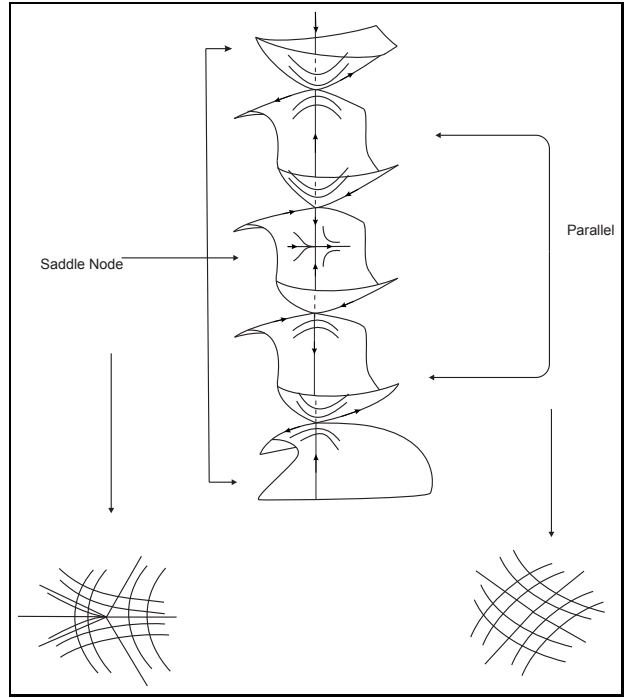}}
\caption{  Lie-Cartan vector field near an axiumbilic point  $E^1_{45}$ and the axial configuration (principal and mean).}
\label{linhas.superficie.folhas}
\end{figure}
\begin{proof}
\nin Condition $\alpha_1=\alpha_3=0$ implies the non-transversal contact of the curves $a_0=0$ and $a_1=0$ at the axiumbilic point $\p$ expressed in the Monge chart by $(0,0)$. By Lemma \ref{lema:rota} and Proposition \ref{prop.axial}, it is possible to express these curves as in equation (\ref{formaa0}). Assuming $\chi \neq 0$, we have the quadratic contact of the curves at the axiumbilic point.

\nin Proposition \ref{equivalencia}  implies that over the axiumbilic point we have five equilibria of the Lie-Cartan vector field. One of them is a regular point of the Lie-Cartan surface, and this is an equilibrium of saddle-node type with center manifold transversal to the axis $p$ (see Claim \ref{saddlenode}).

\nin The remaining equilibria are critical points of Morse type of the Lie-Cartan surface. In the neighborhood of these points, the level set ${\Gg}=0$ are locally cones, and the $4$ points are saddles of the Lie-Cartan vector field (see Claim \ref{4.pontos.criticos}).

\nin Therefore, we conclude the configuration described in Figure \ref{linhas.superficie.folhas}, whose projection of the saddle-node and parallel sectors describe the principal axial and mean axial configurations close to the axiumbilic point $\p$ of type $E^1_{45}$ (Figure \ref{desenho.e45}).
\end{proof}

\begin{prop}
\label{proposicao.e45}
Let $\alpha \in {\Ii}^r$, $r\geq 5$, be an immersion having an axiumbilic point $\p$.
 Then, there exist a neighborhood  $V$ of   $\p$, a neighborhood ${\Vv}$ of ${\alpha}$
 and a function
$F:{\Vv}\fun \R$
of class  ${\Cc}^{r-3}$ such that:
\begin{enumerate}[$i)$]
	\item $dF_{\alpha}\neq 0$,
	\item $F(\mu)=0$ if, and only if, $\mu\in {\Vv}$ has just one axiumbilic point in  $V$, which is of type $E^1_{4,5}$,
	\item $F(\mu)<0$ if, and only if,  $\mu$ has  exactly two axiumbilic points
	in   $V$, one of type $E_4$ and the other of type $E_5$,
	\item $F(\mu)>0$ if, and only if,  $\mu$ has no axiumbilic points in $V$.
\end{enumerate}
\end{prop}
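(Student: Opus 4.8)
The plan is to set up the bifurcation as a transversal unfolding of the quadratic-contact situation analyzed in Proposition \ref{equivalencia}, in direct parallel with the $E^1_{34}$ case handled in Proposition \ref{proposicao.e34}. First, since $\p$ is an axiumbilic point of $\alpha$ with $\alpha_1 = \alpha_3 = 0$ and $\chi \ne 0$, it is of type $E^1_{4,5}$ by the previous proposition. Using the Implicit Function Theorem applied to the equations $a_0 = a_1 = 0$ (whose common zero is the axiumbilic locus, by Proposition \ref{eq.dif.axial}), I would first control the \emph{number} of axiumbilic points near $\p$ for immersions $\mu$ near $\alpha$: after a rigid motion $\Gamma_\mu$ in $\R^4$ and rotation/homothety as in Section \ref{sec:2} and Proposition \ref{prop.axial}, the pair of curves $a_0^\mu = 0$, $a_1^\mu = 0$ can be written, for $\mu$ in a $\Cc^r$-neighborhood ${\Vv}$, in the normal form of \eqref{formaa0} plus a $\mu$-dependent constant perturbation. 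The key local model is: two regular curves $a_0 = 0$ and $a_1 = c(\mu) + (\text{linear}) + \tfrac12 a_{20}x^2 + \cdots = 0$ having quadratic contact at $\mu = \alpha$; perturbing the constant term $c$ either splits the intersection into two transversal points, keeps one quadratic-contact point, or destroys the intersection, exactly as a one-parameter unfolding $s''(0)x^2 + c$ of a double zero. Concretely, after using the IFT to solve $a_0^\mu(x,y) = 0$ as $y = y_\mu(x)$, the axiumbilic points in $V$ correspond to zeros of $\sigma_\mu(x) := a_1^\mu(x, y_\mu(x))$, and $\sigma_\alpha(x) = \tfrac12 \chi\, x^2 + O(3)$ has a nondegenerate critical point at $0$ with $\sigma_\alpha(0) = 0$, $\sigma_\alpha''(0) = \chi \ne 0$.

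Next I would \emph{define the function} $F$. The natural choice is $F(\mu) := \sigma_\mu(x^*(\mu))$, the critical value of $\sigma_\mu$, where $x^*(\mu)$ is the nondegenerate critical point of $\sigma_\mu$ near $0$ (which exists and depends $\Cc^{r-3}$-smoothly on $\mu$ by the IFT applied to $\sigma_\mu'(x) = 0$, using $\sigma_\alpha''(0) = \chi \ne 0$). Equivalently one can take $F$ to be the discriminant of $\sigma_\mu$ in $x$, or, mimicking the $E^1_{34}$ proof, work through the discriminant of the relevant Lie-Cartan polynomial $S(p) = (p^4-6p^2+1)+b_{01}p(1-p^2)$ together with the quantity $\chi$; but the critical-value formulation is cleanest for the saddle-node analogy. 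By construction: $F(\mu) = 0$ iff $\sigma_\mu$ has a (unique, since $\chi \ne 0$ persists) double zero in $V$, i.e. $\mu$ has a single axiumbilic point of quadratic-contact type, which by Proposition \ref{equivalencia} is exactly $E^1_{4,5}$; $F(\mu)$ and $\chi$ of opposite sign iff $\sigma_\mu$ has two simple zeros, i.e. two transversal axiumbilic points; and $F(\mu)$, $\chi$ of the same sign iff $\sigma_\mu$ has no zeros in $V$. Since $\chi > 0$ can be arranged (or absorbed into the sign of $F$) without loss of generality, this gives $(ii)$, and the dichotomy between $(iii)$ and $(iv)$; it remains to identify the \emph{types} of the two points in case $(iii)$.

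For the type identification in $(iii)$, at each of the two transversal axiumbilic points the immersion $\mu$ still has $a \ne 0$ (transversality is an open condition — this is the $a \ne 0$ of Proposition \ref{prop.axial}), so each point is of type $E_3$, $E_4$ or $E_5$ according to the sign of $\Delta(a,b)$ and of $a$ as in Proposition \ref{axi.estaveis}. The claim is that the two points are one $E_4$ and one $E_5$; this follows because near the $E^1_{4,5}$ configuration the relevant discriminant data put both split points in the region $\Delta > 0$ (the $E^1_{4,5}$ point has, by Claim \ref{4.pontos.criticos}, discriminant $\Delta(S) = 4(16+b_{01}^2)^3 > 0$ for the four Morse critical points, i.e. we are in the $a > 0$ / $a < 0$ side away from $\Delta = 0$), with the fifth equilibrium — the saddle-node coming from the regular sheet — resolving on one side into a node (making that point $E_4$) and on the other side persisting as a saddle (making the other point $E_5$); this is precisely the local saddle-node bifurcation on the Lie-Cartan surface analyzed in Claim \ref{saddlenode}, where $\dot x = \tfrac{\chi}{2}x^2 + O(3)$ is an unfolding-ready quadratic saddle-node. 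Tracking which side of $F = 0$ produces the node versus the saddle fixes the correspondence.

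Finally, for $(i)$ — $dF_\alpha \ne 0$ — I would exhibit an explicit one-parameter deformation of $\alpha$ moving $F$ transversally, exactly as in the last part of the proof of Proposition \ref{proposicao.e34}: perturbing the cubic coefficients $(r_{30}, r_{12}, s_{21}, \dots)$ that enter $\alpha_1, \alpha_3$ changes $a_{10}, a_{01}$ or, after renormalization, the constant obstruction $c(\mu)$, hence the critical value $F$, at nonzero rate. One splits into the cases $s_{11}(\alpha) \ne 0$ and $s_{11}(\alpha) = 0$ (then $r_{11}(\alpha) \ne 0$) and uses deformations of the form $\mu = (x,y,R_\alpha,S_\alpha) + (0,0,\, t\,\phi(x,y),\, t\,\psi(x,y))$ with $\phi, \psi$ chosen among the cubic monomials $x^2y$, $\tfrac16 x^3 - \tfrac12 xy^2$, etc., computing $\frac{d}{dt} F(\mu)\big|_{t=0} = \frac{\partial F}{\partial c}\cdot \frac{dc}{dt}$ and checking it is nonzero; since $F$ restricted to the codimension-one stratum $\{\alpha_1 = \alpha_3 = 0\}$ vanishes identically, any deformation leaving that stratum transversally does the job. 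The main obstacle I anticipate is the bookkeeping in step three: verifying, through the rotation/homothety normalization and the coefficient formulas \eqref{eq:coefa}–\eqref{eq:coefb}, that the sign of the split genuinely yields the $E_4$/$E_5$ pair (and not, say, two $E_4$'s) — i.e. correctly matching the saddle-node resolution of the fifth Lie-Cartan equilibrium to the $\Delta$-discriminant sign on each side of $F = 0$. The rest is a routine transcription of the saddle-node bifurcation picture of \cite{andronov}, \cite{soto1} into this setting.
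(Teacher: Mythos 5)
Your construction of $F$ is the same as the paper's: solve $a_0^\mu=0$ for $y=y_\mu(x)$ by the Implicit Function Theorem, locate the nondegenerate critical point $x_\mu$ of $a_1^\mu$ restricted to that curve (using $b_{20}\neq 0$, i.e.\ $\chi\neq 0$), and set $F(\mu)=a_1^\mu(x_\mu,y_\mu(x_\mu))$; the sign analysis in $(ii)$--$(iv)$ and the identification of the split pair as $E_4$ plus $E_5$ via the resolution of the saddle-node on the regular sheet of the Lie--Cartan surface are also in line with (indeed more detailed than) what the paper records.

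The genuine gap is in your argument for $(i)$. You propose to verify $dF_\alpha\neq 0$ by deforming the \emph{cubic} coefficients of $R$ and $S$ (monomials $x^2y$, $\tfrac16x^3-\tfrac12xy^2$, etc.), as in the proof of Proposition \ref{proposicao.e34}. This cannot work here: such deformations leave the $2$-jet of the immersion at the origin unchanged, hence the axiumbilic conditions \eqref{axiumbilico} at $(0,0)$ continue to hold for every $t$, so the curves $a_0^\mu=0$ and $a_1^\mu=0$ still meet at the origin and $\sigma_\mu(0)=a_1^\mu(0,y_\mu(0))=0$ for all $t$. Consequently the critical value $F(\mu)$ keeps the sign forced by $\chi$ (it cannot cross into the ``no axiumbilic points'' region), and since $F(\alpha)=0$ this forces $\frac{d}{dt}F\big|_{t=0}=0$ along your deformations. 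Quantitatively, cubic perturbations only move the \emph{linear} coefficients $a_{10},a_{01},b_{10},b_{01}$, and the critical value of $\sigma_\mu(x)=(b_{10}-b_{01}a_{10})x+\tfrac{\chi}{2}x^2+\cdots$ depends on these only to second order, $F\approx -\tfrac{(b_{10}-b_{01}a_{10})^2}{2\chi}$; no rotation or homothety can convert a linear term vanishing at the origin into the constant obstruction $c(\mu)$ you invoke. What is needed is a deformation of the \emph{quadratic} jet — the paper takes $h_t(x,y)=(x,y,R+txy,S+txy)$, which perturbs $r_{11},s_{11}$ and hence the constant terms $a^0_{00},a^1_{00}$, genuinely separating the two curves and moving the critical value at first order in $t$. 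With that replacement the rest of your argument goes through.
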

\begin{proof}
By Proposition \ref{equivalencia},   $\alpha$ being an immersion having an axiumbilic point $\p$  of type  $E^1_{45}$, the curves $a_0^{\alpha} = 0$ and
$a_1^{\alpha}= 0$ have quadratic contact at $\p$.

\nin Since $\frac{\partial a_0^{\alpha}}{\partial y}(0,0) = a_{01} \neq 0$,
if follows from Implicit Function Theorem
that locally, for    $\mu$ in a neighborhood  ${\Vv}$  of $\alpha$, $y=y_{\mu}(x)$ and $a_0^{\mu}(x,y_{\mu}(x))=0$.

\nin Moreover, $\frac{\partial^2 a_1^{\alpha}}{\partial x^2}(0,0) = b_{20} \neq 0$, and so   $x=x_{\mu}$ is a local solution of
$\frac{\partial a_1^{\mu}}{\partial x}(x_{\mu},y_{\mu}(x_{\mu}))=0$.

\nin Define
${\Ff}(\mu)=a_1^{\mu}(x_{\mu},y_{\mu}(x_{\mu}))$.
Consider the variation $h_t(x,y)=(x,y,R(x,y)+txy,S(x,y)+txy)$. It follows that $\frac{dF(t)}{dt}\bigg |_{t=0}\neq 0$, and so
$dF_{\alpha}\neq 0$. Therefore, the result follows from the Implicit Function Theorem.

The axiumbilic point of type  $E^1_{45}$ is therefore the transition between zero and two axiumbilic points, one of type $E_4$ and the other of type $E_5$.

 In Figures \ref{transition} and \ref{fig:sup45} are  illustrated this transition, with the axial configurations  sketched  in  two different styles. See also Figure  \ref{fig:sup45} for an illustration of transition in the Lie - Cartan surface. 
 
 \end{proof}
 
\begin{figure}[h]
       \centering  
       \fbox{\includegraphics[scale=0.5]{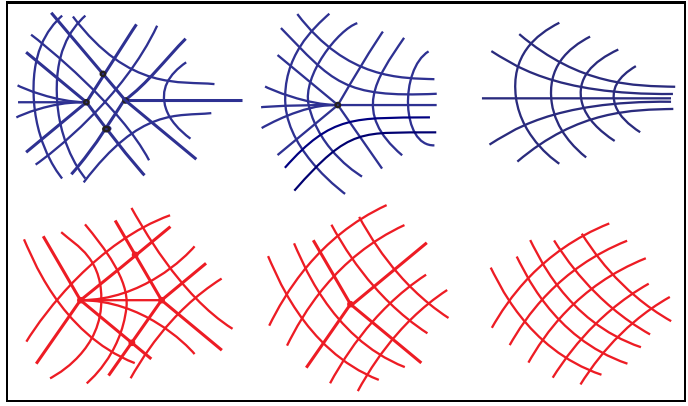}}
\caption{Axiumbilic point $E^1_{45}$. The axiumbilic points $E_4$ and $E_5$ collapse  in
an axiumbilic point  $E^1_{45}$, and after they  are eliminated and there are no axiumbilic points. }
\label{transition}
\end{figure}

\begin{figure}[ht]
       \centering  
       \fbox{\includegraphics[scale=0.8]{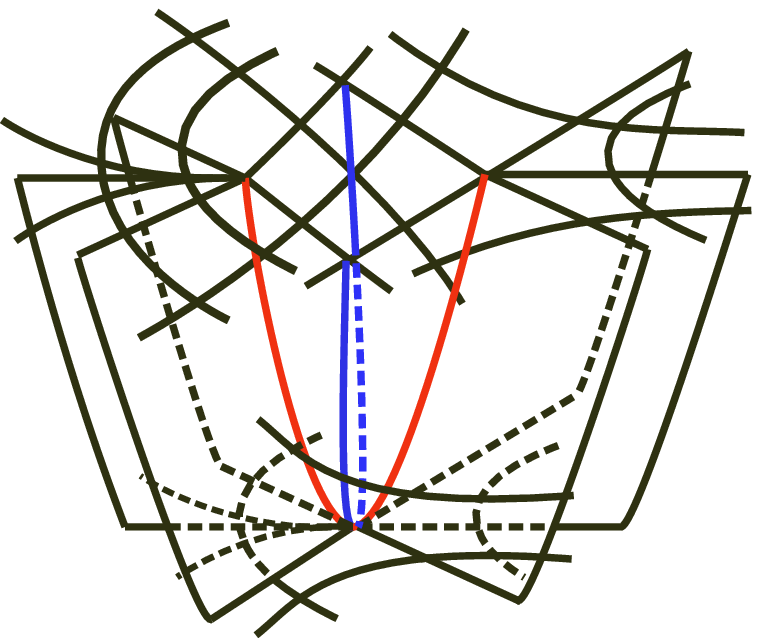}}
\caption{Bifurcation diagram of the axial configuration  near an axiumbilic  point of type    $E^1_{45}$ and the structure of separatrices}
\label{bifu.e45}
\end{figure}

\begin{figure}[h]
       \centering  
       \fbox{\includegraphics[scale=0.56]{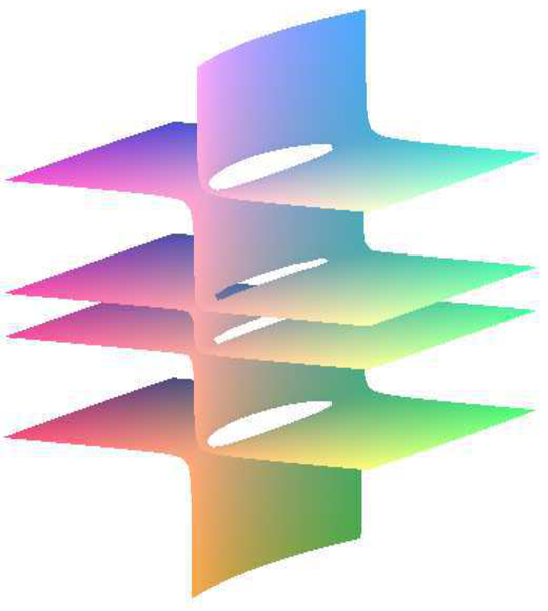}
             \includegraphics[scale=0.56]{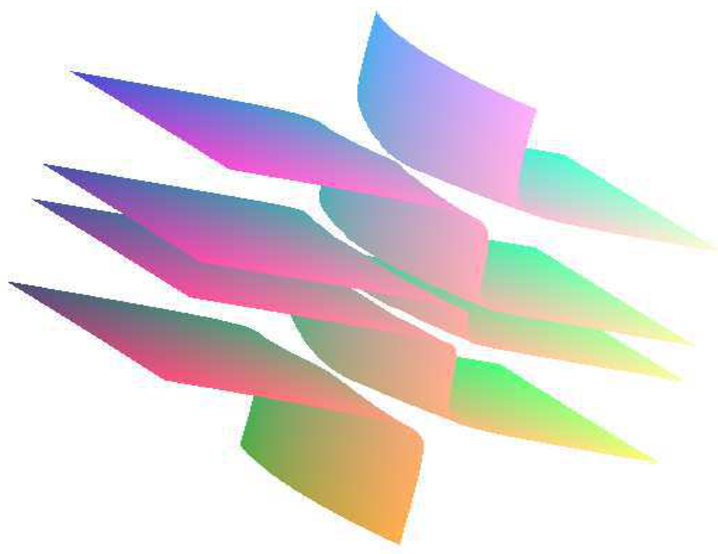}
             \includegraphics[scale=0.56]{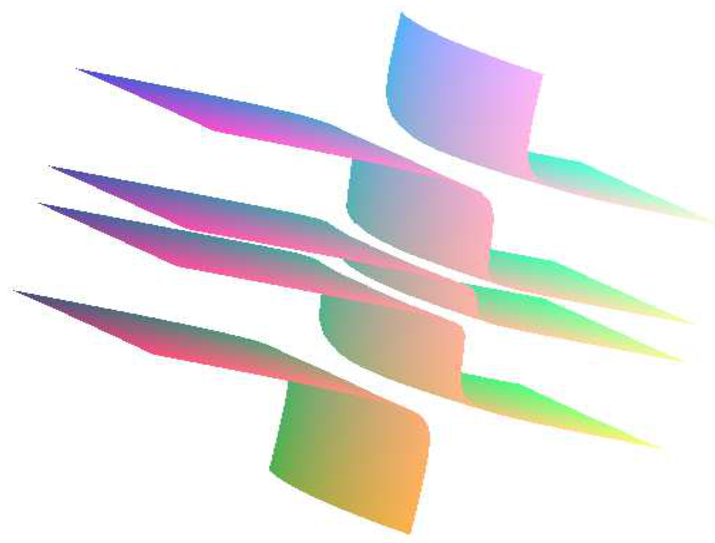}}
\caption{The  Lie-Cartan surface. In the left, with two axiumbilic point, in the center with four singular points, and in the right the four regular levels.}
\label{fig:sup45}
\end{figure}

\begin{prop}
\label{teo.transversality}
In the space of smooth mappings of $M  \times \R \fun \R^4$ which are immersions relative to the first variable, those which have  all their axiumbilic points either  generic (of types $E_3$, $E_4$ and $E_5$)
or, transversally, of  types  $E^1_{34}$ and  $E^1_{45}$ is open and dense.
Furthermore, for such families the axiumbilic points  describe  a regular curve in $M  \times \R$ whose projection into $\R$ has only non-degenerate critical points at $E^1_{45}$ and the regular points
of the projection is a collection of arcs bounded by $E^1_{34}$ points, which a the common boundary of
$E_{3}$ and $E_{4}$ arcs.
\end{prop}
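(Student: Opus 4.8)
The plan is to realize the statement as an application of the Thom transversality theorem together with the local normal forms established in Sections \ref{sec:2}--\ref{sec:3}. First I would set up the appropriate jet space. The axiumbilic condition $a_0 = a_1 = 0$ (Proposition \ref{eq.dif.axial}) is expressed through the $2$-jet of the immersion, while the conditions distinguishing $E_3$, $E_4$, $E_5$, $E^1_{34}$, $E^1_{45}$ involve the $4$-jet (through the coefficients $\alpha_i$, $\beta_i$, and the invariants $a$, $b$, $\chi$, $T$, $\Delta(a,b)$). So I would work in the bundle $J^4(M \times \R, \R^4)$ of $4$-jets of maps $M \times \R \to \R^4$, restricted to the open subset of jets that are immersive in the $M$-variable. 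Inside each fibre I would identify: the codimension-$2$ submanifold $\Sigma_0$ of jets whose $2$-jet part lies on $\{a_0 = a_1 = 0\}$; inside $\Sigma_0$, the open subset $\Sigma_0^{\mathrm{gen}}$ where the transversality determinant $T \neq 0$ and $\Delta(a,b) \neq 0$ and $a \notin\{0,-1\}$ (types $E_3,E_4,E_5$); the codimension-$1$ (inside $\Sigma_0$) stratum $\Sigma_{34}$ cut out by $\Delta(a,b) = 0$ together with the open conditions $a \neq 0$, $(a,b)\neq(-1,0)$, $(a,b)\neq(-\frac{27}{2},\pm\frac52\sqrt5)$ (or the variant $a=-1$, $b\neq 0$); and the codimension-$1$ stratum $\Sigma_{45}$ cut out by $T = 0$ (i.e. $\alpha_1 = \alpha_3 = 0$ after the normalizing rotation) together with $\chi \neq 0$. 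The key algebraic point, which I would verify by the explicit deformations already exhibited in the proofs of Propositions \ref{proposicao.e34} and \ref{proposicao.e45} (the deformations $(0,0,t(\tfrac16 x^3 - \tfrac12 xy^2), tx^2y)$ and $h_t(x,y) = (x,y,R+txy,S+txy)$), is that a jet section of a generic one-parameter family meets $\Sigma_{34}$ and $\Sigma_{45}$ transversally; those computations show the derivatives $\partial\Delta/\partial a$ and $\partial(\,\cdot\,)/\partial t$ are nonzero, which is exactly the infinitesimal surjectivity needed to conclude that these strata are genuine submanifolds and that transversality is a meaningful open-dense condition.

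Next I would apply Thom transversality to the $4$-jet extension $j^4\alpha \colon M \times \R \to J^4(M\times\R,\R^4)$. Openness and density of the set of families whose $j^4$-extension is transverse to each of the finitely many strata $\Sigma_0^{\mathrm{gen}}$, $\Sigma_{34}$, $\Sigma_{45}$ is the standard Thom--Mather conclusion; I would also note that the bad loci removed (the cusp points $(-\frac{27}{2},\pm\frac52\sqrt5)$ of $\Delta(a,b)=0$, the point $(-1,0)$, the locus $a = 0$, the codimension-$\geq 4$ locus $r=s=0$ from the discussion after Lemma \ref{lema.eq.axial.monge}) have codimension $\geq 3$ inside $\Sigma_0$, hence codimension $\geq 5$ in $J^4$, so for a generic $2$-parameter domain $M\times\R$ (dimension $3$) they are avoided altogether. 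Combined with the local normal forms of Section \ref{sec:3} and Propositions \ref{proposicao.e34}, \ref{proposicao.e45}, transversality to these strata at every axiumbilic point is precisely the statement that every axiumbilic point of the family is of one of the five listed types, with the $E^1_{34}$ and $E^1_{45}$ ones occurring transversally.

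For the second assertion I would argue as follows. Transversality of $j^4\alpha$ to the smooth codimension-$2$ manifold $\Sigma_0$ forces the axiumbilic set $\{a_0 = a_1 = 0\}$ in $M\times\R$ to be a regular $1$-dimensional submanifold, i.e. a disjoint union of arcs and circles. Along this curve consider the projection $\pi_\R \colon M\times\R \to \R$. At a point of type $E_3$, $E_4$ or $E_5$ the transversality determinant $T\neq 0$ means, via the Implicit Function Theorem exactly as in the proof of Proposition \ref{proposicao.e34}, that $(x,y)$ is locally a graph over the parameter, so $\pi_\R$ is a submersion there. At an $E^1_{45}$ point, Proposition \ref{proposicao.e45} shows the function $F(\mu) = a_1^\mu(x_\mu, y_\mu(x_\mu))$ has $F(\mu)=0$ with a nonzero first derivative in the transverse direction and $F$ changing the number of axiumbilic points from two to zero — this is exactly a nondegenerate (Morse, index as dictated by $\chi$) critical point of $\pi_\R$ restricted to the axiumbilic curve; and at an $E^1_{34}$ point Proposition \ref{proposicao.e34} gives $F(\mu) = \Delta(a(\mu),b(\mu))$ with $dF_\alpha \neq 0$, so $\pi_\R$ is again a submersion but the type along the arc changes ($E_3$ on one side, $E_4$ on the other), so the $E^1_{34}$ points are precisely the interior separation points between $E_3$-arcs and $E_4$-arcs of the regular part of the projection. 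This yields the claimed structure: the critical values of $\pi_\R$ on the axiumbilic curve are nondegenerate and occur only at $E^1_{45}$ points, while the regular part is a union of arcs whose endpoints are $E^1_{34}$ points bordering an $E_3$-arc and an $E_4$-arc.

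The main obstacle I expect is not the transversality machinery but the bookkeeping needed to show that the defining conditions of the five types really do cut out \emph{smooth} strata of the stated codimensions in jet space, and in particular that the transversality conditions $T\neq 0$ at $E^1_{45}$ (equivalently the nonvanishing of $\partial(b_{20}-a_{20}b_{01})/\partial(\text{jet data})$) and the nonvanishing of $\partial\Delta/\partial a$ along $\Delta = 0$ away from its cusps are \emph{realizable by actual perturbations of the immersion}, not merely by abstract jet perturbations; this is where one must invoke the explicit deformations of Propositions \ref{proposicao.e34} and \ref{proposicao.e45} to certify that the jet map $\alpha \mapsto j^4\alpha(\p)$ is a submersion onto the relevant directions, so that Thom transversality in the space of immersions (rather than in the space of arbitrary jet sections) applies.
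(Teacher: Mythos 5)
Your proposal follows essentially the same route as the paper: the paper's own proof is precisely an appeal to the canonical axiumbilic stratification of the $4$-jet space (Section \ref{sec:4}, Proposition \ref{th:trans}) together with the Thom Transversality Theorem and the explicit deformation computations of Propositions \ref{proposicao.e34} and \ref{proposicao.e45}, and you supply the same ingredients in greater detail, including the analysis of the projection of the axiumbilic curve that the paper leaves implicit. The only quibble is a harmless codimension miscount in your avoidance argument: the excluded loci (the cusps of $\Delta=0$, the point $(-1,0)$, and the set where $a=0$ together with $\chi=0$) have codimension $2$ inside the axiumbilic stratum, hence codimension $4$ in the jet space rather than $5$, but this still exceeds $\dim(M\times\R)=3$, so the conclusion that a generic one-parameter family avoids them is unaffected.
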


Proposition \ref{teo.transversality}
 follow from the analyses  in propositions
 \ref{proposicao.e34} and \ref{proposicao.e45} and an application of Thom Transversality Theorem to the submanifold of four jets of immersions at axiumbilic points, stratified by the generic axiumbilic points, by $E^1_{34}$ and type $E^1_{45}$, and their complement. See Section \ref{sec:4}.

 \section{Transversality and Stratification}
 \label{sec:4}
 
 Consider the  space $\mathbb{J}^{k}(M,\mathbb{R}^4)$  of $k$-jets  of  immersions $\alpha$ of a compact oriented surface $M $  into $\mathbb{R} ^4$, endowed with the  structure of Principal Fiber Bundle. The base is  $M$; the  fiber is the space $\mathbb{R} ^4\times \mathbb{J}^{k}(2,4)$, where $\mathcal{J}^{k}(2,4)$ is the space of  $k$-jets of immersions  of $\mathbb{R}^2$ to $\mathbb{R} ^4$,  preserving the respective  origins. The structure group,  $\mathbb{A}_{+}^{k}$,  is  the product of the group  of $\mathcal{L}_{+}^{k}(2,2)$ of  $k$-jets of origin and  orientation preserving diffeomorphisms of $\mathbb{R}^2$, acting on the right by coordinate changes,  and the group $\mathbb{R} ^4\times \mathcal{O}_{+}(4,4)$ of positive isometries, acting on the left, consisting on a translation, taken as a vector  in the first factor,  and a positive rotation of $\mathbb{R}^4$, taken  on the second factor.
 Denote by $\Pi_{k,l}, k\leq l$ the projection of $\mathcal{J}^{l}(2,4)$ to $\mathcal{J}^{k}(2,4)$. It is well known that the group action commutes with projections.

\begin{defn} \label{def:cs}
We define below  the {\it canonic  axiumbilic stratification} of
$\mathcal{J}^{4}(2,4)$. The term {\it  canonic} means that the strata are invariant under
 the action of the group  $\mathbb{A}_{+}^{k}$= $\mathcal{O}_{+}(4,4)\times\mathcal{L}_{+}^{k}(2,2)$.

 \begin{itemize}
\item[1)]
 {\it aximbilic Jets}:   ${\mathcal U}^{4}$, those  in the orbit of  $j^{4}(x,y,R(x,y),S(x,y))$, where   $R$ and $S$ are as  in equations \eqref{Rx} and \eqref{S} satisfying the axiumbilic conditions defined in terms of $j^2R(0)$ and $j^2S(0).$ It is a closed variety of codimension $2$.

\item[2)] {\it Non-axiumbilic Jets}: $(\mathcal{NU})^{4}$ is the
complement  of ${\mathcal U}^{4}$. It is an open submanifold of
codimension $0$.

\item[3)] {\it Non-stable axumbilic Jets}: $(\mathcal{NE})^{4}$,
in the orbit of the axiumbilic jets for which:

\noindent $\bullet$\;\; $T=(\alpha_1\alpha_4-\alpha_2\alpha_3)(r^2+s^2)= 0$
or

\noindent $\bullet$\;\;
  $T\neq 0$ and conditions that characterize $E_3$ or $E_4$ axiumbilic points in Proposition \ref{axi.estaveis} fail.
\end {itemize}
It is a closed variety of  codimension 3, which can be expressed as the union of the  following invariant strata:
\begin {itemize}

 \item[3.1)] {\it Non-Transversal jets}:   $\mathcal{E}^1_{45}$ for which  $T= 0$ and $\chi\ne 0$.  It  has codimension 3.

\item[3.2)] {\it Transversal-double jets}:
$(\mathcal{E}^1_{34})^{4}$, The Lie-Cartan   field  has a quadratic saddle-node in the projective line which is characterized by Proposition \ref{proposicao.e34}. Has codimension 3.

\end {itemize}

\begin{itemize}
\item[4)] The {\it stable axumbilic jets}: $\mathcal{UE}^4$, the
complement in $\mathcal{U}^{4}$ of   $\mathcal{NE}^{4}$.
 \end {itemize}
\end {defn}

\begin{prop} \label{th:trans}
In the space of 1-parameter families of immersions, those whose 4-jet extension are transversal to  the canonical axiumbilic stratification is open and dense.
\end{prop}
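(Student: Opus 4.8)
The plan is to obtain Proposition~\ref{th:trans} from Thom's jet transversality theorem applied to the four-jet extension of a one-parameter family, the only real work being to certify that the canonical axiumbilic stratification of Definition~\ref{def:cs} is an honest locally finite stratification of $\mathbb{J}^{4}(M,\R^{4})$ by $\mathbb{A}_{+}^{4}$-invariant submanifolds with the codimensions listed there. Since $M$ is compact the stratification of the total space has finitely many strata whose closures are unions of strata, so being transversal to all of them at once is a finite intersection of conditions, each of which is open (in the strong $\mathcal{C}^{\infty}$ topology, by compactness) and residual (by Thom); the intersection is then open and dense.

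First I would assemble the strata, all of which are cut out by polynomial equations in $j^{2}R(0),j^{2}S(0)$ and $j^{4}R(0),j^{4}S(0)$. The stable axiumbilic jets $\mathcal{UE}^{4}$, carrying the open sub-strata $E_{3},E_{4},E_{5}$ separated by the signs of $\Delta(a,b)$ and $a$, form a codimension-$2$ submanifold, the pair of axiumbilic equations~\eqref{axiumbilico} defining a submersion off the deeper degenerate locus; the transversal-double jets $(\mathcal{E}^{1}_{34})^{4}$, where $\Delta(a,b)=0$ away from $a\in\{0,-1\}$ and the cusps, and the non-transversal jets $\mathcal{E}^{1}_{45}$, where $T=\alpha_{2}\alpha_{3}-\alpha_{1}\alpha_{4}=0$ and $\chi=b_{20}-a_{20}b_{01}\ne 0$, are codimension-$3$ submanifolds; and the remaining degenerate locus $\mathcal{R}$ --- the axiumbilic jets with $r^{2}+s^{2}=0$, cut out by the four independent equations $r=s=r_{11}=s_{11}=0$ as recalled in Section~\ref{sec:2}; the non-transversal jets with $\chi=0$; and the non-regular points $(-1,0)$ and $(-\frac{27}{2},\pm\frac{5\sqrt{5}}{2})$ of the curve $\Delta=0$ identified in Proposition~\ref{axi.estaveis} --- has codimension at least $4$, each of its pieces imposing an extra independent condition. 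The two checks that require genuine work are the smoothness and exact codimension of $(\mathcal{E}^{1}_{34})^{4}$ and $\mathcal{E}^{1}_{45}$: for the first, the proof of Proposition~\ref{proposicao.e34} produces explicit jet deformations showing $\partial\Delta/\partial a\ne 0$ on the stratum, so the two axiumbilic equations together with $\Delta$ form a submersion onto $\R^{3}$ there; for the second, Proposition~\ref{equivalencia} together with the coefficient formulas~\eqref{eq:coefa}--\eqref{eq:coefb} and the deformation used in the proof of Proposition~\ref{proposicao.e45} show that the two axiumbilic equations together with $T$ form a submersion onto $\R^{3}$. Finally, $T$ being invariant under positive rotations of the tangent and normal planes (the remark after~\eqref{determinante2}) and all these normal forms being built via Lemma~\ref{lema:rota} and Proposition~\ref{prop.axial}, every stratum is $\mathbb{A}_{+}^{4}$-invariant, so by the principal-fiber-bundle structure of $\mathbb{J}^{4}(M,\R^{4})$ it descends to a globally defined submanifold of the total space of the same codimension.

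With the stratification in place I would invoke transversality: a smooth $G\colon M\times\R\to\R^{4}$ which is an immersion in the first variable has a smooth four-jet extension along $M$, a section of $\mathbb{J}^{4}(M,\R^{4})$ over $M\times\R$, and the parametric jet transversality theorem gives a residual set of $G$ for which this section is transversal to every stratum; compactness of $M$ upgrades residual to open-and-dense, which is the Proposition. (The qualitative picture of Proposition~\ref{teo.transversality} then follows by the dimension count $\dim(M\times\R)=3$: transversality to $\mathcal{R}$ makes its preimage empty, transversality to $\mathcal{UE}^{4}$ makes the axiumbilic set a regular curve, and transversality to the codimension-$3$ strata $\mathcal{E}^{1}_{45}$, $(\mathcal{E}^{1}_{34})^{4}$ makes their preimages finite and, via Propositions~\ref{proposicao.e34} and~\ref{equivalencia}, pins down the projection to $\R$ --- a nondegenerate critical point at each $E^{1}_{45}$, and a regular point with $E_{3}$ and $E_{4}$ on the two sides at each $E^{1}_{34}$.)

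The main obstacle is precisely the submersion claim for $(\mathcal{E}^{1}_{34})^{4}$ and $\mathcal{E}^{1}_{45}$: one must verify that the three defining functions --- the two axiumbilic equations together with $\Delta$, respectively with $T$ --- have rank-$3$ differential everywhere on the stratum, which is where the explicit coefficient expansions~\eqref{eq:coefa}--\eqref{eq:coefb} and the one-parameter jet deformations constructed in the proofs of Propositions~\ref{proposicao.e34},~\ref{equivalencia} and~\ref{proposicao.e45}, each moving one defining condition at nonzero speed, are indispensable; one must also keep track of the three derivatives lost in passing to the Lie--Cartan field, which in the $\mathcal{C}^{r}$ setting is what the hypothesis $r\ge 5$ is tailored to absorb.
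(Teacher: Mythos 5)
Your proposal is correct and follows essentially the same route as the paper, whose entire proof of this proposition is the single line ``Follows from Thom Transversality Theorem''; you have simply made explicit the verifications (smoothness, codimension, and $\mathbb{A}_{+}^{4}$-invariance of the strata) that the paper leaves implicit in Definition~\ref{def:cs} and the preceding propositions. No discrepancy in approach.
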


\begin {proof}
Follows from Thom Transversality Theorem \cite {lt}.
\end {proof}

\section{Concluding Comments}
\label{sec:5}

In this work
was
 established the principal axial  and the mean axial configurations in a  neighborhood of the axiumbilc points
of  types
 $E^1_{34}$ and  $E^1_{45}$.
 The approach concerning  methods and class of differentiability requirements is  distinct
 from
  that
  presented in the work of  Guti\'errez-Gu\'inez-Casta\~neda in \cite{gutierrez3}.
  The use of the  Lie-Cartan suspension method made possible the  study of these points by means the  classic theory of  differential equations,
  in clear analogy
  with
  the saddle-node  bifurcation of vector fields in the plane, following \cite{andronov},  \cite{soto1} and  \cite{soto-ga-gu}.

  The type $E^1_{34}$ satisfies the  transversality condition  of the curves $a_0$ and $a_1 $, Proposition \ref{prop.axial},
  which amounts to the fact the Lie-Cartan
  surface
  remains  regular in a neighborhood of the projective axis  over the axiumbilic point.
  In this case  there  is a saddle-node   equilibrium point of  the  Lie-Cartan vector field whose central
  separatrix is  along the   projective axis itself.
   The  axial configurations are established in Proposition \ref{proposicao.e3434}  and the qualitative change (bifurcation) between the types  $E_3$ and  $E_4$, with the  variation of
   a
	one parameter in the  space of   immersions, is explained in Proposition \ref{e34}. See Figure \ref{bifu.e34}.

In the case $E^1_{45}$   the transversality  condition  fails, since  curves  $a_0$ and  $a_1$, Proposition \ref{equivalencia},   have
quadratic contact at the  axiumbilic point.
  Here the Lie-Cartan  surface is  not regular along the  projective axis.
  It is established in   Proposition  \ref{equivalencia}  that there are four
   conic critical points of  Morse type on the  $p-$axis.
   At these points there are partially hyperbolic equilibria of the Lie-Cartan vector field.
   There is also a  saddle-node  equilibrium in the regular part of
    the surface whose central  separatrix  is  transversal to the  projective axis.
     The integral curves of the Lie - Cartan vector field
     on the regular  components  of the Lie - Cartan (which are four bi-punctured disks)  are   illustrated
     in Figure \ref{linhas.superficie.folhas}.
     Their projections on the plane give the axial configurations in  a neighborhood of the axiumbilic point.

     In Proposition \ref{teo.transversality} is established the one parameter variation (bifurcation) in the space of immersions. This leads to
      the fact that for small   perturbations of an immersion  with an  axiumbilic point of this type it holds that  two  axiumbilic  points, one of type  $E_4$ and the other of type  $E_5$, bifurcate form $E^1_{45}$  or disappear leaving a neighborhood free from  axiumbilic points, in full analogy with the saddle-node bifurcation \cite{andronov} and \cite{soto1}. See Figure \ref{bifu.e45}.

  In  Theorem \ref{th:trans} the genericity of the points $E^1_{34}$ and $E_{45}^1$ is established in terms of stratification and transversality.

\addcontentsline{toc}{section}{\bibname}

\vskip 1cm

\author{\noindent Ronaldo Garcia\\Instituto de Matem\'atica e Estat\'{\i}stica \\
Universidade Federal de Goi\'as,\\ CEP 74001--970, Caixa Postal 131 \\
Goi\^ania, Goi\'as, Brazil \\
 \email{ragarcia@mat.ufg.br  }
\vskip 0.5cm

\vskip 0.5cm
\author{\noindent Jorge Sotomayor\\ Instituto de Matem\'atica e Estat\'{\i}stica \\
Universidade  de S\~ao Paulo,\\
 Rua do Mat\~ao  1010,
Cidade Univerit\'aria, CEP 05508-090,\\
S\~ao Paulo, S. P, Brazil \\
 \email{sotp@ime.usp.br}
 \vskip 0.5cm

 \vskip 0.5cm
\author{\noindent Flausino L. Spindola\\ Instituto de Matem\'atica e Estat\'{\i}stica \\
Universidade  de S\~ao Paulo,\\
 Rua do Mat\~ao  1010,
Cidade Univerit\'aria, CEP 05508-090,\\
S\~ao Paulo, S. P, Brazil \\
 \email{flausino@ime.usp.br}

\end{document}